\documentclass{article}
\usepackage{psfrag}
\usepackage[parfill]{parskip}
\usepackage{float, graphicx}
\usepackage[]{epsfig}
\usepackage{amsmath, amsthm, amssymb}
\usepackage{epsfig}
\usepackage{verbatim}
\usepackage{multicol}
\usepackage{url}
\usepackage{latexsym}
\usepackage{mathrsfs}
\usepackage[colorlinks, bookmarks=true]{hyperref}
\usepackage{graphicx}
\usepackage{amsmath}
\usepackage{enumerate}
\usepackage[normalem]{ulem}
\usepackage{bm}
\usepackage{dsfont}
\usepackage{stmaryrd}
\usepackage{cite}

\flushbottom
\usepackage{color}

\makeatletter
\def\thm@space@setup{%
  \thm@preskip=\parskip \thm@postskip=0pt
}
\makeatother

%%%%%%%%%%%%%%%%%%%%%%%%%%%%%%%%%%%%%%

\oddsidemargin=0in
\evensidemargin=0in
\textwidth=6.5in
\setlength{\unitlength}{1cm}
\setlength{\parindent}{0.6cm}

\numberwithin{equation}{section}

\renewcommand{\cal}{\mathcal}
\newcommand\cA{{\mathcal A}}

\newcommand{\cE}{{\cal E}}
\newcommand{\cG}{{\cal G}}

\newcommand{\cN}{{\cal N}}

\newcommand\cZ{{\mathcal Z}}
\newcommand\cY{{\mathcal Y}}

\newcommand{\fa}{{\frak a}}
\newcommand{\fb}{{\frak b}}
\newcommand{\fc}{{\frak c}}
\newcommand{\fd}{{\frak d}}

\newcommand{\bma}{{\bm{a}}}
\newcommand{\bmb}{{\bm{b}}}

\newcommand{\bme}{\bm{e}}

\newcommand{\bmq}{{\bm q}}

\newcommand{\bmu}{{\bm{u}}}

\newcommand{\rQ}{{\rm Q}}
\newcommand{\rd}{{\rm d}}
\newcommand{\rU}{{\rm U}}
\newcommand{\ri}{\mathrm{i}}

\newcommand{\bC}{{\mathbb C}}
\newcommand{\bE}{\mathbb{E}}

\newcommand{\bP}{\mathbb{P}}

\newcommand{\bR}{{\mathbb R}}

\newcommand{\bT}{\mathbb T}

\newcommand{\al}{\alpha}

\newcommand{\e}{{\varepsilon}}

\newcommand{\la}{\lambda}

\DeclareMathOperator{\Tr}{Tr}

\DeclareMathOperator{\supp}{supp}

\DeclareMathOperator{\diag}{diag}

\renewcommand{\Re}{\mathop{\mathrm{Re}}}
\renewcommand{\Im}{\mathop{\mathrm{Im}}}

\newcommand{\deq}{\mathrel{\mathop:}=} %define :=
 
 %identity map
 %unit matrix
\renewcommand{\leq}{\leqslant}
\renewcommand{\geq}{\geqslant}

%{\umat_{\{#1\}}}

%{\umat_{\{#1\}}}

\newcommand{\nc}{\normalcolor}

\newcommand{\del}{\partial}

 %underline
 %overline

\newcommand{\rn}[1]{
       \romannumeral#1
}

\newcommand{\qq}[1]{[\![{#1}]\!]}

\newcommand{\beq}{\begin{equation}}
\newcommand{\bEq}{\end{equation}}
%\newcommand{\bal}{\begin{align}}
%\newcommand{\eal}{\end{align}}
%\newcommand{\baln}{\begin{align*}}
%\newcommand{\ealn}{\end{align*}}

%%%%%%%%%%%%%%%%%%%%%%%   Parantheses   %%%%%%%%%%%%%%%%%%%%%

\theoremstyle{plain} %plain, definition, remark
\newtheorem{theorem}{Theorem}[section]
\newtheorem*{theorem*}{Theorem}
\newtheorem{lemma}[theorem]{Lemma}
\newtheorem*{lemma*}{Lemma}
\newtheorem{corollary}[theorem]{Corollary}
\newtheorem*{corollary*}{Corollary}
\newtheorem{proposition}[theorem]{Proposition}
\newtheorem*{proposition*}{Proposition}
\newtheorem{assumption}[theorem]{Assumption}
\newtheorem*{assumption*}{Assumption}

\newtheorem{definition}[theorem]{Definition}
\newtheorem*{definition*}{Definition}
\newtheorem{example}[theorem]{Example}
\newtheorem*{example*}{Example}
\newtheorem{remark}[theorem]{Remark}

\newtheorem*{remark*}{Remark}
\newtheorem*{remarks*}{Remarks}

%%%%%%%%%%%%%%%%%%%%%%%%%%%%%%%%%%%%%%%%%%%%%%%%%%%%
%Title format

\def\author#1{\par
    {\centering{\authorfont#1}\par\vspace*{0.05in}}
}

\def\titlefont{\fontsize{13}{15}\bfseries\boldmath\selectfont\centering{}}
\def\authorfont{\fontsize{13}{15}}

\let\affiliationfont\rhfont

\def\address#1{\par
    {\centering{\affiliationfont#1\par}}\par\vspace*{11pt}
}

\def\body{
\setcounter{footnote}{0}
\def\thefootnote{\alph{footnote}}
\def\@makefnmark{{$^{\rm \@thefnmark}$}}
}

\def\title#1{
    \thispagestyle{plain}
    \vspace*{-14pt}
    \vskip 79pt
    {\centering{\titlefont #1\par}}%
    \vskip 1em
}
%%%%%%%%%%%%%%%%%%%%%%%%%%%%%%%%%%%%%%%%%%%%%%%%%%%%%

%special command of this paper
\newcommand{\mfct}{m_{\mathrm{fc},t}}
\newcommand{\rfct}{\rho_{\mathrm{fc},t}}
\newcommand{\boeta}{\bm{\eta}}
\newcommand{\bla}{\bm{\lambda}}
\newcommand{\boxi}{\bm{\xi}}
\newcommand{\msc}{m_{\rm sc}}
\newcommand{\f}{f}

%%%%%%%%%%%%%%%%%%%%%%%%%%%%%%%%%%%%%%%%%%%%%%%%%%%%%

\begin{document}
\title{Eigenvector Statistics of Sparse Random Matrices}

\vspace{1.2cm}

\noindent\begin{minipage}[c]{0.33\textwidth}
 \author{Paul Bourgade}
\address{New York University\\
   E-mail: bourgade@cims.nyu.edu}
 \end{minipage}
 \begin{minipage}[c]{0.33\textwidth}
 \author{Jiaoyang Huang}
\address{Harvard University\\
   E-mail: jiaoyang@math.harvard.edu}
 \end{minipage}
\begin{minipage}[c]{0.33\textwidth}
 \author{Horng-Tzer Yau}
\address{Harvard University \\
   E-mail: htyau@math.harvard.edu}

 \end{minipage}

~\vspace{0.3cm}

\begin{abstract}
%We analyze the \emph{Dyson eigenvector flow} with general initial data. We prove that the eigenvectors are asymptotically normal in the direction $\bmq$ after time $t\gg \eta_*$, if in a window of size $r\gg t$, the initial density of states is bounded below and above down to the scale $\eta_*$, and the initial eigenvectors are delocalized in the direction $\bmq$  down to the scale $\eta_*$. As applications of our main result, we prove that the eigenvectors of sparse random matrices, i.e. the adjacency matrices of Erdos-Renyi graphs or random $d$-regular graphs, are asymptotically normal, provided the averaged degree increases with the size of the graph. Our methodology follows \cite{BoYaQUE} by analyzing the Dyson eigenvector flow, combining with an isotropic local law for Green's function.

We prove that the bulk eigenvectors of sparse random matrices, i.e. the adjacency matrices of Erd\H{o}s-R\'enyi  graphs or random regular graphs, are asymptotically jointly normal, provided the averaged degree increases with the size of the graphs. Our methodology follows \cite{BoYaQUE} by analyzing the eigenvector flow under Dyson Brownian motion, combined with an isotropic local law for Green's function. As an auxiliary result, we prove that for the eigenvector flow of Dyson Brownian motion with general initial data, the eigenvectors are asymptotically jointly normal in the direction $\bmq$ after time $\eta_*\ll t\ll r$, if in a window of size $r$, the initial density of states is bounded below and above down to the scale $\eta_*$, and the initial eigenvectors are delocalized in the direction $\bmq$  down to the scale $\eta_*$. 
\end{abstract}

%\keywords{} 
%\vspace{.5cm}

%\tableofcontents

\let\thefootnote\relax\footnote{\noindent The work of P. B. is partially supported by the NSF grant DMS-1513587. The work of H.-T. Y. is partially supported by the NSF grant DMS-1307444, DMS-1606305 and a Simons Investigator award.}

\date{\today}

\vspace{-0.7cm}

\section{Introduction}\label{s:intro}
In this paper, we consider the following two models of sparse random matrices $H$ with sparsity $p=p(N)$:
\begin{enumerate}
\item{(Erd\H{o}s-R\'enyi  Graph Model $G(N, p/N)$)} $H\deq A/\sqrt{p(1-p/N)}$, where $A$ is the adjacency matrix of the Er\H{o}s-R\'enyi graph on $N$ vertices obtained by drawing an edge between each pair of vertices randomly and independently, with probability $p/N$. 

%\item{(Generalized Erd\H{o}s-R\'enyi  Graph Model with averaged degree $p$)} $H$ is a sparse random matrix of the form $H=B+f\bme\bme^*$, where $\bme^*=(1,1,\cdots, 1)/\sqrt{N}$, $f$ is a deterministic number satisfying $0\leq f\leq N^{1/2}$, and $B$ is a matrix with real and independent entries up to the symmetry constraint $b_{ij}=b_{ji}$ with mean zero, variance $1/N$ and $\bE[|b_{ij}|^{k}]\leq C^k/Np^{(k-2)/2}$ for $1\leq i,j\leq N$ and $2\leq k\leq (\log N)^{10\log \log N}$.
\item{($p$-Regular Graph Model $G_{N,p}$)} $H\deq A/\sqrt{p-1}$, where $A$ is the adjacency matrix of the uniform random $p$-regular graph on $N$ vertices, i.e. a uniformly chosen symmetric matrix with entries in $\{0,1\}$ such that all rows and columns have sum equal to $p$ and all diagonal entries vanish.
\end{enumerate}

Given a graph $\cG$ on $N$ vertices with adjacency matrix $A$, many interesting properties of graphs are revealed by the eigenvalues and eigenvectors of $A$. Such phenomena and the applications have been intensively investigated for over half a century. To mention some, we refer the readers to the books \cite{MR1421568, MR2882891} for a general discussion on spectral graph theory,  the survey article \cite{MR2247919} for the connection between eigenvalues and expansion properties of graphs, and the articles \cite{MR1468791, MR1240959, 609407,1Coifman24052005,2Coifman24052005,doi:10.1137/0611030,MR2952760, MR2294342,MR1450607} on the applications of eigenvalues and eigenvectors in various algorithms, i.e., combinatorial optimization, spectral partitioning and clustering.

We study the spectral properties of sparse random graphs from the random matrix theory point of view, i.e. the local eigenvalue statistics and the eigenvector statistics. It is expected that: \rn{1}) the gap distribution for the bulk eigenvalues $N(\lambda_{i+1}-\lambda_{i})$ is universal, with density approximately given by the \emph{Wigner surmise}; \rn{2}) the distribution of the second largest eigenvalue is given by the Tracy-Widom distribution (the largest eigenvalue of GOE); \rn{3}) the eigenvectors are asymptotically normal. For Wigner type random matrices, it is proved in a series of papers \cite{MR3541852,MR2662426, MR2661171,MR2639734, MR2810797,MR2919197, MR2981427, MR3372074, MR2847916, MR2784665, ly, fix2} for the bulk and \cite{MR1727234, MR2669449, MR2871147} for the edge, that the eigenvalue statistics are universal; it is proved in \cite{MR3034787,MR2930379, BoYaQUE} that the eigenvectors are asymptotically normal. Sparser models are harder to analyze. The bulk universality for both Erd\H{o}s-R\'{e}nyi graphs and random regular graphs in the regime $p\gg 1$ were proved in \cite{MR3098073,MR2964770, MR3429490}. The edge universality was only proved for Erd\H{o}s-R\'{e}nyi graphs in the regime $p\gg N^{1/3}$ in  \cite{MR3098073, MR2964770, Edgesparse}.   Less was known for the distribution of eigenvectors. To our knowledge, only recently, in \cite{normalregular}, Backhausz and Szegedy proved that the components of almost eigenvectors of $p$-regular graphs with fixed $p$ converges to normal distribution in weak topology. However the proof heavily depends on the special structure of regular graphs and is hard to be generalized to other models.

Let $H$ be the normalized adjacency matrix of $G(N,p/N)$ or $G_{N,p}$ in the sparse regime, i.e. $p=p(N)\ll N$. We denote its eigenvalues as $\lambda_1\leq \lambda_2\leq \cdots \leq \lambda_N$ and the corresponding normalized eigenvectors $\bmu_1,\bmu_2,\cdots, \bmu_N$. The main goal of this paper is to prove that the bulk eigenvectors for $H$ in the regime $p\gg 1$ are asymptotically jointly normal. Comparing with \cite{normalregular}, our results give explicitly the variance of the limit distribution, the asymptotical normality holds in any direction, and the argument does not depend on the special symmetry of the models.

\begin{theorem}\label{t:evsparse}
Fix arbitrary small constant $\delta,\kappa>0$. Let $H$ be the normalized adjacency matrix of sparse Erd\H{o}s-R\'enyi graphs $G(N, p/N)$ with sparsity $N^{\delta}\leq p\leq N/2$; or 
the normalized adjacency matrix of $p$-regular graphs $G_{N,p}$ with sparsity $N^{\delta}\leq p\leq N^{2/3-\delta}$.
Fix a positive integer $n>0$ and a polynomial $P$ of $n$ variables.
Then for any unit vector $\bmq\in \bR^N$, such that $\bmq \perp \bme$ (where $\bme=(1,1,\cdots, 1)^*/\sqrt{N}$), and deterministic indexes $i_1,i_2,\cdots, i_n\in \qq{\kappa N, (1-\kappa)N}$, %and $N$ %$N\geq N(\delta,P)$ large enough
there exists a constant $\fd>0$ depending on $\delta$ such that
\begin{align}\label{evuni}
\begin{split}
\left|\bE[ P(N \langle {\bmq}, {\bmu}_{i_1}\rangle^2, N \langle {\bmq}, {\bmu}_{i_2}\rangle^2, \cdots, N \langle {\bmq}, {\bmu}_{i_n}\rangle^2)]
-\bE[ P({\mathscr{N}}_1^2,{\mathscr{N}}_2^2, \cdots, {\mathscr{N}}_n^2)]\right|\leq CN^{-\fd},
\end{split}
\end{align}
provided $N$ is large enough, where $\bmu_i$ are eigenvectors of $H$, $\mathscr{N}_i$ are independent standard normal random variables.
\end{theorem}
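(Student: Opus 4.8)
The plan is to follow the strategy of \cite{BoYaQUE}, reducing the statement to a dynamical input (eigenvector universality under Dyson Brownian motion) and a comparison input (an isotropic local law that allows us to interpolate between the original sparse model and the model after running DBM for a short time). Concretely, I would introduce the matrix Ornstein-Uhlenbeck (or Dyson Brownian motion) flow $H_t = \re^{-t/2} H_0 + \sqrt{1-\re^{-t}}\, W$, where $H_0 = H$ is the sparse matrix and $W$ is an independent GOE matrix, and fix a time scale $t = N^{-1+\e}$ for a suitably small $\e > 0$. The proof then has three parts: (i) a \emph{local law / rigidity} input for the sparse model down to the optimal scale $\eta_* = N^{-1+o(1)}$, together with an isotropic delocalization bound $N\scalar{\bmq}{\bmu_i}^2 \lesssim N^{o(1)}$ for bulk indices and $\bmq \perp \bme$; (ii) an application of the auxiliary DBM result quoted in the abstract, which gives that the eigenvectors of $H_t$ are asymptotically jointly Gaussian in the direction $\bmq$ provided $\eta_* \ll t \ll r$ with $r$ of order one in the bulk; (iii) a Green's function comparison argument showing that the polynomial observable $\bE[P(N\scalar{\bmq}{\bmu_{i_1}}^2,\dots)]$ changes by at most $N^{-\fd}$ when $H_0$ is replaced by $H_t$.

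For step (i), I would invoke the known sparse local law (entrywise and isotropic) for $G(N,p/N)$ and $G_{N,p}$ in the regime $p \geq N^\delta$; the constraint $p \le N^{2/3-\delta}$ in the regular case is exactly what is needed for the known isotropic local law / Green's function estimates for random regular graphs to hold with error terms that beat the relevant powers of $N$. The fluctuation-averaging and self-improving scheme yields rigidity of the bulk eigenvalues at scale $N^{-1+o(1)}$ and hence verifies the ``density of states bounded above and below down to scale $\eta_*$'' hypothesis of the auxiliary theorem, while the isotropic law yields the delocalization-in-direction-$\bmq$ hypothesis. The orthogonality $\bmq \perp \bme$ is essential because $\bme$ is (approximately, for Erd\H{o}s-R\'enyi, or exactly, for regular graphs) the Perron-Frobenius eigenvector with a large eigenvalue that sits outside the bulk, so it must be projected out before the eigenvectors behave universally.

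For step (iii), the comparison, I would set up a continuity/interpolation estimate: write $F(H) = \bE[P(N\scalar{\bmq}{\bmu_{i_1}(H)}^2,\dots)]$ and estimate $|F(H_0) - F(H_t)|$ by a resolvent expansion. The key point is that $N\scalar{\bmq}{\bmu_i}^2$ can be written via a contour integral of $\scalar{\bmq}{G(z)\bmq}$ around the $i$-th eigenvalue, so that after smoothing at the scale $\eta \sim N^{-1-o(1)}$ one obtains a polynomial functional of isotropic Green's function entries; one then compares the first few moments of the sparse entries of $H_0$ with those of $H_t$. Since $H_t$ differs from $\re^{-t/2}H_0$ only by a Gaussian of variance $1-\re^{-t}\sim t = N^{-1+\e}$, and the sparse entries already match GOE to two moments (after normalization), with a third-moment mismatch of size $p^{-1/2} \le N^{-\delta/2}$, the standard four-moment / Lindeberg replacement yields a bound $N^{C\e - \delta/2 + \text{(gain from }\eta)}$, which is $\le N^{-\fd}$ once $\e$ is chosen small relative to $\delta$. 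For the regular graph case the entries are not independent, so here I would instead use the comparison between $G_{N,p}$ and a constrained Gaussian ensemble (as in the regular-graph bulk-universality literature), exploiting the switching-based local law and the constraint $p \le N^{2/3-\delta}$.

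The main obstacle I anticipate is step (iii) in the random regular case: the hard constraints (fixed row sums, zero diagonal) destroy independence, so the clean Lindeberg swap used for Erd\H{o}s-R\'enyi is unavailable, and one must instead propagate the comparison through the local law obtained via the edge-switching/local-resampling construction, controlling isotropic Green's function entries $\scalar{\bmq}{G\bmq}$ rather than just the trace of $G$; this is where the upper bound $p \le N^{2/3 - \delta}$ becomes indispensable. A secondary technical point is checking that the hypotheses of the auxiliary DBM theorem hold \emph{uniformly} along the flow and for the random initial data $H_0$ — i.e.\ that the required two-sided bounds on the density and the delocalization of the initial eigenvectors in direction $\bmq$ hold with overwhelming probability, so that the conditional expectation over $W$ (given $H_0$) can be taken inside and then the unconditional bound recovered. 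Both difficulties are handled by the isotropic local law of step (i), which is why establishing that law in the stated parameter ranges is the real engine of the proof.
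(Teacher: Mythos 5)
Your three-step skeleton --- sparse (isotropic) local law, the DBM eigenvector-moment-flow theorem, and a comparison between $H$ and $H_t$ at a time $t=N^{-1+\e}$ --- is exactly the paper's strategy, and your steps (i) and (ii) match Proposition \ref{p:initialest} and Proposition \ref{p:smallG}, including the role of $\bmq\perp\bme$ (the isotropic estimate imported from \cite{Rigidregular} is only available for vectors orthogonal to $\bme$, and for regular graphs $\bme$ is an exact eigenvector). One detail: the paper runs the Ornstein--Uhlenbeck flow centered at the mean, $H_t\stackrel{d}{=}\tilde H_t+\sqrt{1-e^{-t}}G$ with $\tilde H_t=\f+e^{-t/2}(H-\f)$, so that the first two moments of the entries are preserved exactly along the flow; with your flow $e^{-t/2}H+\sqrt{1-e^{-t}}W$ you would additionally have to argue that the shrinking rank-one mean component (in the direction $\bme$) does not move the bulk eigenvectors, which is avoidable.

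Where you genuinely diverge is step (iii). You propose a Green's-function comparison: represent $N\langle\bmq,\bmu_i\rangle^2$ by a contour integral of $\langle\bmq,G(z)\bmq\rangle$ around $\lambda_i$, smooth at scale $\eta\sim N^{-1-o(1)}$, and perform a Lindeberg swap. The paper instead compares the eigenvector observable directly: it multiplies $P(N\langle\bmq,\bmu_i\rangle^2)$ by a cutoff $\rho_M(Q_i)$ with $Q_i=N^{-2}\sum_{j\ne i}|\lambda_j-\lambda_i|^{-2}$ and $M=N^{2\tau}$, bounds the first three derivatives in the matrix entries of this regularized observable using $(\fc,\bmq)$-generality (Proposition \ref{bdrQ}), applies the continuity lemma of \cite{MR3429490} along the OU flow to get an error of order $tN^{1+O(\fc+\tau)}p^{-1/2}$, and removes the cutoff at the end via the level repulsion bound $\bP(Q_i\geq N^{2\tau})\leq N^{-\tau/2}$ from \cite{MR3429490}. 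As written, your version has a gap at precisely this point: a contour enclosing only $\lambda_i$, and a regularization at scale $\eta\ll N^{-1}$, both require an a priori lower bound on the gaps to the neighboring eigenvalues, and below the microscopic scale the local law gives no control of $\langle\bmq,G(z)\bmq\rangle$; without importing a level-repulsion input of the kind the paper uses (or of the kind appearing in the four-moment theorems for eigenvectors), the swap cannot be closed. For the regular case your instinct (constrained comparison, switching-based estimates, the restriction $p\leq N^{2/3-\delta}$) is right, but the concrete mechanism in the paper is to run the \emph{constrained} Dyson Brownian motion on the subspace of vanishing row sums, identify it through an isometry of $\bme^{\perp}$ with ordinary $(N-1)$-dimensional DBM, apply Theorem \ref{t:normal} there, and conclude with the continuity estimate of \cite{Univregular}.
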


In particular, Theorem \ref{t:evsparse} implies that the entries of eigenvectors are asymptotically independent Gaussian. Indeed, for any fixed $\ell\in\mathbb{N}$ and deterministic $i\in\qq{\kappa N, (1-\kappa)N}$, $\alpha_1,\dots, \alpha_\ell\in\qq{1,N}$, possibly depending on $N$, 
we have $\sqrt{N}(u_i(\alpha_1),\dots,u_i(\alpha_\ell))\to(\mathscr{N}_1,\dots,\mathscr{N}_\ell)$, a vector with independent normal entries (provided the sign of the first entry of $u_i$, say, is uniformly and independently chosen).

The proof of Theorem \ref{t:evsparse} consists of three steps, analogous to the three-step strategy developed in a series of papers \cite{MR2810797,MR2919197, MR2981427,MR3429490} for proving bulk eigenvalue universality:
\begin{enumerate}
\item Establish the (isotropic) local semicircle law for sparse random matrices down to the optimal scale $(\log N)^C/N$.
\item Analyze the eigenvector flow of Dyson Brownian motion to derive asymptotical normality of eigenvectors for sparse random matrices with a small Gaussian component. 
\item Prove by comparison that the eigenvector statistics of sparse random matrices are the same as those of ones with a small Gaussian component.  
\end{enumerate}

For the first step, the local semicircle laws for sparse random matrices were established in \cite{MR3098073} for Erd\H{o}s-R\'{e}nyi graphs, and in \cite{Rigidregular} for $p$-regular graphs. For the third step, a robust comparison argument was developed in \cite{MR3429490}, and our case follows directly. The main content of this paper is the second step.  We study the eigenvector flow of Dyson Brownian motion with general initial data. For any $N\times N$ real deterministic matrix $H$, we define the following random matrix process, the Dyson Brownian motion
\begin{align}
\label{DBM} \rd h_{ij}(t) =\rd w_{ij}(t)/\sqrt{N},
\end{align}
where $W_t=(w_{ij}(t))_{1\leq i,j\leq N}$ is symmetric with $(w_{ij}(t))_{1\leq i\leq j\leq N}$ a family of independent Brownian motions of variance $(1+\delta_{ij})t$. We denote $H_t= (h_{ij}(t))_{1\leq i,j\leq N}$, and  $H_0=H$ is our original matrix. We denote the eigenvalues of $H_t$ as $\bm\la(t): \lambda_1(t)\leq \lambda_2(t)\leq \cdots \leq \lambda_N(t)$ and the corresponding eigenvectors $\bmu_1(t),\bmu_2(t),\cdots, \bmu_N(t)$, where we write the $j$-th entry of $\bmu_i(t)$ as $u_{ij}(t)$. 

Under some mild local regularity conditions (see Assumption \ref{a:boundImm} and \ref{a:boundEv}) on the initial matrix $H_0$, we first prove the isotropic local law for the Green's function of $H_t$, which is a consequence of the small Gaussian component. The isotropic local law was first proved for Wigner matrices and sample covariance matrices in \cite{MR3103909, MR3183577}. Our result provides a dynamical version of the entry-wise local law, with general initial matrices. With the isotropic local law as input combined with the rigidity estimates for eigenvalues from \cite{ly}, we analyze the \emph{eigenvector moment flow}, introduced in \cite{BoYaQUE}. We prove that the eigenvectors of $H_t$ corresponding to ``bulk" eigenvalues are asymptotically normal after a short time. Our result can be viewed as an extension of \cite[Theorem 7.1]{BoYaQUE}. We require only weak local information of the initial data, while \cite{BoYaQUE} relies on a stronger form of the local law, including  rigidity of the eigenvalues.

In this paper motivated by random graphs, we restrict our attention to symmetric matrices, but our method equally applies to the Hermitian universality class.

\subsection{Preliminary notations} \label{s:prenot}A fundamental quantity in this paper is  the resolvent of $H_t$, denoted 
%\begin{align*}
$ G(t;z):=(H_t-z)^{-1}$,
%\end{align*}
and the Stieltjes transform
\begin{align*} 
m_t(z):=\frac{1}{N}\Tr G(t,z)=\frac{1}{N}\sum_{i=1}^{N}\frac{1}{\lambda_i(t)-z},
\end{align*}
where $z\in \bC_+$ in in the upper half complex plane. Often, we will write $z$ as the sum of its real and imaginary parts
%\begin{align*}
$z = E + \i \eta$ where $E = \Re[z],\eta = \Im [z]$.
%\end{align*}

We denote by $\rfct$ the free convolution of the empirical eigenvalue distribution of $H_0$, i.e. $\rho_0=1/N\sum \delta_{\lambda_i(0)}$ and the semicircle law with variance $t$, and  $\mfct$ the Stieltjes transform of $\rfct$. The density $\rfct$ is analytic on its support for any $t>0$. The function $\mfct$ solves the equation
 \begin{equation}\label{def_gi}
\mfct (z) = m_0 ( z  + t \mfct (z) ) = \frac{1}{N}\sum_{i=1}^N g_i(t,z),\quad g_i(t,z)\deq \frac{1}{ \lambda_i (0) - z - t \mfct (z) },
\end{equation}  
where refer to \cite{MR1488333} for a detailed study of free convolution with semi-circle law.
For any $t\geq 0$, we denote the classical eigenvalues of $\rfct$ by $\gamma_i(t)$, which is given by 
\beq\label{eqn:eigloc}
\gamma_i(t)=\sup_{x}\left\{\int_{-\infty}^x \rfct(x)\rd x \geq \frac{i}{N}\right\},\quad i\in\qq{1,N}. 
\bEq

Throughout the paper we use the following notion of { overwhelming probability}.
\begin{definition}  We say that a family of events ${\cal F} (u)$ indexed by some parameter(s) $u$ holds with overwhelming probability, if for any large $D>0$ and $N\geq N(D,u)$ large enough,
\begin{align}
\bP[ {\cal F} (u) ] \geq 1 - N^{-D},
\end{align}
uniformly in $u$.

\end{definition}

We use $C$ to represent a large universal constant, and $c$ for a small universal constant, which may depend on other universal constants, i.e., $\fc$ in the control parameter $\psi$ defined in \eqref{control}, constants $\frak a$  and $\frak b$ in Assumption \ref{a:boundImm} and \ref{a:boundEv}, and may be different from line by line. We write $X\lesssim Y$ or $X=O(Y)$, if there exists some universal constant $C$ \nc such that $|X|\leq CY$. We write $X\lesssim_k Y$, or $X=O_k(Y)$ if there exists some constant $C_k$, which only depends on $k$ (and possibly other universal constants), such that $|X|\leq C_kY$. We write $X\ll Y$ if there exists some small constant $c$, such that $N^{c}|X|\leq Y$.

We now can state the assumptions on the initial matrix $H_0$. In Sections \ref{s:locallaw} and \ref{relaxation}, we fix an arbitrarily small number $\frak c>0$, and define the control parameter 
\begin{align}\label{control}
\psi= N^{\frak c}.
\end{align}
We fix an energy level $E_0$, radius $1/N\ll r\leq 1$, and mesoscopic scales $1/N\ll \eta_*\ll r$, where $r$ and $\eta_*$ will depend on $N$. For example, the reader can take $\eta_*=\psi/N$, $r=N^{-1/2}$ in mind. We will study the eigenvectors corresponding to the ``bulk'' eigenvalues, which refer to eigenvalues on the interval $[E_0-r,E_0+r]$. We show that after short time, the projections of those ``bulk" eigenvectors on some unit vector $\bmq$ are asymptotically normal.

 The first assumption is the same as in \cite{ly}, which imposes the regularity of density of $H_0$ around $E_0$.

\begin{assumption}{\label{a:boundImm}}
 We assume that there exists some large constant $\frak a>0$ such that 
 \begin{enumerate}
  \rm\item \label{boundnorm} The norm of $H_0$ is bounded, $\|H_0\|\leq N^{\frak{a}}$.
  \rm\item \label{locallaw} The Stieltjes transform of $H_0$ is lower and upper bounded
  \begin{align}\label{e:imasup}
   \frak{a}^{-1}\leq \Im[m_0(z)]\leq \frak{a},
  \end{align}
  uniformly for any $z\in \{E+\i \eta: E\in[E_0-r, E_0+r], \eta_*\leq \eta\leq 1\}$.
 \end{enumerate}
\end{assumption}

Besides the information on eigenvalues of the initial matrix $H_0$, we also need the following regularity assumption on its eigenvectors. 

\begin{assumption}{\label{a:boundEv}}
We assume that for some unit vector  $\bmq$, there exists some small constant $\frak b>0$ such that
  \begin{align}
  \label{e:delocalize}\big |  \langle \bmq, G(0,z)\bmq\rangle - m_0(z) \big | \leq    N^{-\frak b},
 \end{align}
 uniformly for any $z\in \{E+\i \eta: E\in[E_0-r, E_0+r], \eta_*\leq \eta\leq r\}$, where $m_0$ is the Stieltjes transform of $H_0$.

\end{assumption}

\subsection{Statement of Results}
Let $E_0$ and $r$ be the same as in Assumption \ref{a:boundImm}. For any $0\leq \kappa< 1$, 
we denote
\begin{align*}
 I^r_\kappa (E_0):=[E_0-(1-\kappa)r, E_0+ (1-\kappa)r], %\quad I^r (E_0) = I^r_0 (E_0)
\end{align*}
and the spectral domain: 
\begin{align}\label{e:dom}
 \cal D_\kappa:=\{z=E+\i \eta: E\in I^r_\kappa (E_0), \psi^4/N\leq \eta \leq 1-\kappa r \}.
\end{align}

\begin{theorem}\label{t:normal}
 We assume that the inital matrix $H_0$ satisfies Assumption \ref{a:boundImm} and \ref{a:boundEv}. %Then the asymptotic normality of  ``bulk" eigenvectors of $H_0$ holds in the sense that for any positive integer $n$ and any polynomial $P$ of $n$ variables, and for any $\kappa>0$ which are arbitrarily small and $\eta_*\ll t\ll r$, we have for large enough $N\geq N(n,\frak{a},\frak{b})$
Fix $\kappa>0$, a positive integer $n>0$ and a polynomial $P$ of $n$ variables. Then for any $\eta_*\ll t\ll r$, there exists a constant $\fd>0$ depending on $\fa,\fb, r,t$ such that %and $N$ %\geq N(n,\frak{a},\frak{b})$ 
%large enough,
\begin{equation}\label{eqn:detRelax}
\sup_{ |I|=n:\atop\forall k\in I, \lambda_k(t)\in I_{2\kappa}^r(E_0)}
\left|\bE\left[P\left(\left(N|\langle \bmq,\bmu_{k} (t)\rangle|^2\right)_{k\in I}\right) \right]-\bE\left[ P\left((|\mathscr{N}_j|^2)_{j=1}^n\right)\right]\right|\leq CN^{-\frak{d}},
\end{equation}
provided $N$ is large enough, where $\sup$ is over all possible index sets $I$, and $\mathscr N_j$ are independent standard normal random variables.
\end{theorem}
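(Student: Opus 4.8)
The plan is to run the Dyson Brownian motion \eqref{DBM}, condition on the entire eigenvalue trajectory $\bm\lambda=\{\bm\lambda(s)\}_{s\ge0}$, and follow the overlaps $\zeta_k(s):=\sqrt N\,\langle\bmq,\bmu_k(s)\rangle$ through the \emph{eigenvector moment flow} of \cite{BoYaQUE}. For a configuration $\bm\sigma=(\sigma_1,\dots,\sigma_N)\in\bN^N$ with $|\bm\sigma|=m$ I set
\begin{equation*}
f_s(\bm\sigma):=\frac{1}{\prod_k\bE[\mathscr N^{2\sigma_k}]}\,\bE\Bigl[\prod_k\zeta_k(s)^{2\sigma_k}\;\Big|\;\bm\lambda\Bigr],\qquad \bE[\mathscr N^{2\ell}]=(2\ell-1)!!,
\end{equation*}
with $\mathscr N$ a standard normal. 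By \cite[Theorem 7.1 and its proof]{BoYaQUE}, $f_s$ solves the parabolic equation $\partial_s f_s(\bm\sigma)=\sum_{i\ne j}c_{ij}(s)\,2\sigma_i(1+2\sigma_j)\bigl(f_s(\bm\sigma^{ij})-f_s(\bm\sigma)\bigr)$, where $c_{ij}(s)=\bigl(N(\lambda_i(s)-\lambda_j(s))^2\bigr)^{-1}$ and $\bm\sigma^{ij}$ is $\bm\sigma$ with one particle moved from site $i$ to site $j$; its unique equilibrium is the constant $f\equiv1$, which encodes precisely the Gaussian moment relations. Expanding $P$ into monomials and using $\bE[\mathscr N_j^{2m}]=(2m-1)!!$, \eqref{eqn:detRelax} reduces to showing $f_t(\bm\sigma)=1+O(N^{-\fd})$ for every bounded configuration $\bm\sigma$ supported on $\{k:\lambda_k(t)\in I^r_{2\kappa}(E_0)\}$; the exceptional part of the probability space is harmless since $N|\langle\bmq,\bmu_k(s)\rangle|^2\le N$ always.

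First I would assemble the two inputs that drive the relaxation. Since $H_s$ is $H_0$ plus an independent GOE matrix of scale $\sqrt s$ (i.e.\ carries a Gaussian component of variance $s$), Assumption \ref{a:boundImm} places $H_s$ in the framework of \cite{ly}, which yields eigenvalue rigidity $|\lambda_i(s)-\gamma_i(s)|\le\psi^C/N$ for bulk indices at all $\eta_*\ll s\le1$, together with the regularity of $\rfct$ near $E_0$ (density bounded above and below there, hence consecutive $\gamma_i(s)$ spaced $\asymp1/N$), so that $c_{ij}(s)\asymp N/(i-j)^2$ on the relevant scales. The second, genuinely dynamical, input is the isotropic local law of Section~\ref{s:locallaw}: Assumptions \ref{a:boundImm}--\ref{a:boundEv} propagate under the flow, so that for $\eta_*\ll s\ll r$,
\begin{equation*}
\bigl|\langle\bmq,G(s,z)\bmq\rangle-\mfct(z)\bigr|\;\lesssim\;\psi^C\Bigl(\sqrt{\Im\mfct(z)/(N\eta)}+(N\eta)^{-1}\Bigr),\qquad z\in\cal D_\kappa.
\end{equation*}
The mechanism is the method of characteristics for the self-consistent equation \eqref{def_gi}: the weak, scale-$\eta_*$ bound \eqref{e:delocalize} at time $0$ is transported down to scale $\psi^4/N$ at time $s$ (control at time $s$ and scale $\eta$ corresponds to control at time $0$ and scale $\asymp s+\eta\in[\eta_*,r]$, which is exactly the range of \eqref{e:delocalize} because $\eta_*\ll s\ll r$), with the Brownian increments of \eqref{DBM} controlling the martingale fluctuation. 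Sending $\eta\downarrow\psi^4/N$ and using the spectral decomposition gives $\bmq$-delocalization of bulk eigenvectors, $N|\langle\bmq,\bmu_k(s)\rangle|^2\le\psi^C$ with overwhelming probability, hence the a priori bound $f_s(\bm\sigma)\le\psi^{Cm}$ for bulk configurations.

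With these in hand I would carry out the relaxation analysis of \cite{BoYaQUE} on a window $[t_0,t]$ with $\eta_*\ll t_0\ll t$: start the flow at $t_0$ from $f_{t_0}(\bm\sigma)\le\psi^{Cm}$, replace the generator by its short-range truncation keeping only pairs with $|i-j|\le\ell$ for a suitable $1\ll\ell\ll Nt$ (the truncation error controlled by finite speed of propagation for the flow), use rigidity to make the surviving $c_{ij}(s)$ uniformly comparable to their deterministic counterparts on the relevant configurations, and apply the Nash-type $L^2\to L^\infty$ bound together with the De Giorgi--Nash--Moser H\"older-continuity estimate for this reversible jump process. Over a window of length $\asymp t\gg\eta_*\gg1/N$ this flattens $f$ towards its equilibrium, $|f_t(\bm\sigma)-1|\lesssim(1/(Nt))^c\,\psi^{Cm}\le N^{-\fd}$ for a suitable $\fd=\fd(\fa,\fb,r,t)>0$, with the hypothesis $t\ll r$ being what keeps rigidity and density regularity valid throughout. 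Recombining the monomials of $P$ gives \eqref{eqn:detRelax}.

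The main obstacle is the dynamical isotropic local law of the middle step: promoting the averaged, scale-$\eta_*$ information \eqref{e:delocalize} at time $0$ into control of $\langle\bmq,G(s,z)\bmq\rangle$ down to scale $\psi^4/N$ at time $s$, and thus into genuine $\bmq$-delocalization of the bulk eigenvectors of $H_s$. This requires setting up the characteristic flow for the isotropic resolvent starting only from the weak input \eqref{e:delocalize}, a careful estimate of the martingale term generated by the Brownian increments, and closing a scale-by-scale bootstrap; by contrast, the eigenvector-moment-flow relaxation, though heavy in its short-range bookkeeping, is a fairly direct adaptation of \cite{BoYaQUE} once rigidity and delocalization are available.
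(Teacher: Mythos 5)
There is a genuine gap in your final (relaxation) step. After the short-range truncation and finite speed of propagation, you propose to conclude $f_t(\bm\sigma)=1+O(N^{-\fd})$ by ``Nash-type $L^2\to L^\infty$ bounds together with De Giorgi--Nash--Moser H\"older continuity'' for the reversible jump process. Even granting such regularity estimates, they only show that $f$ becomes locally flat on the time scale considered; on a short window $t\ll 1$ they do not identify the flat value as the global equilibrium value $1$. The mechanism that actually forces $f\to 1$ in the paper is a maximum principle: at a maximizing configuration $\tilde\boeta$ one writes $\mathscr{S}(s)g_s(\tilde\boeta)$ in terms of $\frac1N\sum_k \Im\,\frac{g_s(\tilde\boeta^{j_pk})}{z_{j_p}-\lambda_k}$ and uses the isotropic local law \eqref{evcontrol} to replace the spectral average $\sum_k \eta\,\langle\bmq,\bmu_k\rangle^2/((\lambda_{j_p}-\lambda_k)^2+\eta^2)$ by $\Im[\mfct(z_{j_p})]$, which produces the dissipative term $-\frac{1}{\eta}(g_s(\tilde\boeta)-1)$ plus a coupling to the $(n-1)$-particle flow $f_s(\tilde\boeta\backslash j_p)$ (see \eqref{eqn:keybound}); one then closes by induction on the particle number together with the averaging operators ${\rm Av}$, ${\rm Flat}_a$ and Gronwall. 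Your sketch contains neither this use of the local law inside the dynamics nor the induction over particle numbers, and for $n\ge 2$ the rates $2\sigma_i(1+2\sigma_j)$ are configuration dependent, so the flow is not a scalar divergence-form parabolic equation to which DGNM-type theory applies off the shelf --- this is precisely why both \cite{BoYaQUE} and this paper argue via the maximum principle. Without that step your argument shows at best local flattening of $f$, not convergence to $1$ at rate $N^{-\fd}$.

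On the middle step, your route to the dynamical isotropic law (characteristics for the self-consistent equation, transporting the scale-$\eta_*$ input \eqref{e:delocalize} down to scale $\psi^4/N$, with a martingale estimate and a scale-by-scale bootstrap) differs from the paper, which proves Theorem \ref{t:isolaw} statically for $H_t=\Lambda_0+\sqrt t\,W$ by a moment method: Schur complement plus Taylor expansion of $(D-\cal E)^{-1}$, the weak-correlation bound of Proposition \ref{p:weakCor}, and the weighted sums of Proposition \ref{p:sumqg}. A characteristics argument is a plausible alternative, but you flag it yourself as the main obstacle and do not carry it out, so as it stands both the key analytic input and the key relaxation mechanism of the theorem are asserted rather than proved. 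Secondary points in the same spirit: the finite-speed estimate must be proved with the window-adapted distance \eqref{effdis} (rigidity is only local), and the boundary of the window is handled in the paper by the ${\rm Av}$/${\rm Flat}$ construction with errors $O(\psi\ell/d)$, which your scheme would also need once the maximum principle is in place.
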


As a corollary, we have the following local quantum unique ergodicity statements for ``bulk" eigenvectors.
\begin{corollary}\label{c:que}
 We assume that the initial matrix $H_0$ satisfies Assumption \ref{a:boundImm}. We further assume that there exists a
small  constant $ \frak b$ such that
 \begin{align}
 \label{e:delocalize}\big |  (H_0-z)^{-1}_{ij} - m_0(z) \delta_{ij}  \big | \leq    \frac{1}{N^{\frak b}},    \quad m_0(z) = \frac 1 N \Tr (H_0-z)^{-1} 
\end{align}
uniformly for any $z\in \{E+\i \eta: E\in[E_0-r, E_0+r], \eta_*\leq \eta\leq r \}$. Then the following quantum unique ergodicity holds: Fix $ \kappa >0$. For any $\eta_*\ll t\ll r$ and $\e>0$, there exists a constant $\fd>0$ depending on $\fa,\fb, r,t$ such that %and $N$ $N\geq N(\frak{a},\frak{b})$  large enough
\beq\label{localque}
\sup_{ k: \lambda_k(t)\in I_{2\kappa}^r(E_0) }  \bP\left(\left|\frac{N}{\|{\bf a}\|_1}\sum_{i=1}^N a_i u_{ki}^2\right|>N^{-\e} \right)\leq C\, N^{2\e}\left(N^{-\fd}+\|{\bf a}\|_1^{-1}\right),
\bEq
provided $N$ is large enough, where ${\bf a}=(a_1,a_2,\cdots, a_N)$, such that $\sum_i a_i=0$ and $\max_i |a_i| \le 1$,  and its norm $\|{\bf a}\|_1=\sum |a_i|$.
\end{corollary}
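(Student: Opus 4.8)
The plan is to derive Corollary \ref{c:que} from Theorem \ref{t:normal} by a second-moment (Markov/Chebyshev) argument, exploiting the fact that the hypothesis \eqref{e:delocalize} of the corollary — entrywise closeness of $(H_0-z)^{-1}$ to $m_0(z)\delta_{ij}$ — is precisely Assumption \ref{a:boundEv} applied simultaneously to every coordinate direction $\bmq=\bme_\alpha$ (and, by polarization, to directions $(\bme_\alpha\pm\bme_\beta)/\sqrt2$). Fix $\mathbf a$ with $\sum_i a_i=0$ and $\max_i|a_i|\le1$, and set $Z_k:=\frac{N}{\|\mathbf a\|_1}\sum_i a_i u_{ki}^2$. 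First I would observe that $\bE[Z_k]$ is small: writing $a_i=a_i^+-a_i^-$ and applying Theorem \ref{t:normal} with $n=1$, $P(x)=x$ and $\bmq=\bme_i$ gives $\bE[N u_{ki}^2]=1+O(N^{-\fd})$ uniformly, whence $\bE[Z_k]=\frac{1}{\|\mathbf a\|_1}\sum_i a_i(1+O(N^{-\fd}))=O(N^{-\fd})$, using $\sum_i a_i=0$ and $\sum_i|a_i|=\|\mathbf a\|_1$.

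Next I would control the variance $\bE[Z_k^2]=\frac{N^2}{\|\mathbf a\|_1^2}\sum_{i,j}a_ia_j\,\bE[u_{ki}^2u_{kj}^2]$. Here the key input is Theorem \ref{t:normal} with $n=2$: for $i\neq j$ one wants $\bE[N^2 u_{ki}^2 u_{kj}^2]$ to match $\bE[\mathscr N_1^2\mathscr N_2^2]=1$ up to $N^{-\fd}$, but since $u_{ki},u_{kj}$ are entries of the \emph{same} eigenvector they are not jointly two independent Gaussians. This is resolved by using the polarization directions $\bmq_{\pm}=(\bme_i\pm\bme_j)/\sqrt2$: $\langle\bmq_+,\bmu_k\rangle^2-\langle\bmq_-,\bmu_k\rangle^2=2u_{ki}u_{kj}$, and Theorem \ref{t:normal} applied to each of $\bmq_+,\bmq_-$ (each a valid unit vector, and after checking $\bmq_\pm\perp\bme$ is not required here since the corollary's hypothesis is stated directly in terms of matrix entries, equivalently Assumption \ref{a:boundEv} for these $\bmq$) controls the mixed moments. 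Concretely, expanding $\bE[(N\langle\bmq_+,\bmu_k\rangle^2)(N\langle\bmq_-,\bmu_k\rangle^2)]$, $\bE[(N\langle\bmq_+,\bmu_k\rangle^2)^2]$, etc., via the appropriate polynomials $P$ and matching to the corresponding Gaussian moments, one extracts $\bE[N^2u_{ki}^2u_{kj}^2]=1+O(N^{-\fd})$ for $i\neq j$ and $\bE[N^2u_{ki}^4]=3+O(N^{-\fd})$. Feeding this in, $\bE[Z_k^2]=\frac{1}{\|\mathbf a\|_1^2}\big[\sum_{i\neq j}a_ia_j(1+O(N^{-\fd}))+\sum_i a_i^2(3+O(N^{-\fd}))\big]=\frac{1}{\|\mathbf a\|_1^2}\big[(\sum_i a_i)^2+2\sum_i a_i^2+O(N^{-\fd}\|\mathbf a\|_1^2)\big]=O(N^{-\fd})+\frac{2\sum_i a_i^2}{\|\mathbf a\|_1^2}$, and since $|a_i|\le1$ we have $\sum_i a_i^2\le\sum_i|a_i|=\|\mathbf a\|_1$, so $\bE[Z_k^2]=O(N^{-\fd})+O(\|\mathbf a\|_1^{-1})$.

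Finally I would apply Markov's inequality: $\bP(|Z_k|>N^{-\e})\le N^{2\e}\bE[Z_k^2]\le C N^{2\e}(N^{-\fd}+\|\mathbf a\|_1^{-1})$, and take the supremum over $k$ with $\lambda_k(t)\in I_{2\kappa}^r(E_0)$, which is legitimate because the estimates from Theorem \ref{t:normal} are uniform over such $k$ (the $\sup$ over index sets $I$ in \eqref{eqn:detRelax}). This yields \eqref{localque}. The main obstacle is the variance computation for $i\neq j$: one must verify that Theorem \ref{t:normal}, which is stated for squared overlaps $N\langle\bmq,\bmu_k\rangle^2$ in a \emph{single} direction, genuinely controls the \emph{cross} correlation $\bE[u_{ki}^2u_{kj}^2]$. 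The polarization trick above does this, but care is needed: one must run Theorem \ref{t:normal} for the directions $\bme_i$, $\bme_j$, $\bmq_+$, $\bmq_-$ and combine the four resulting Gaussian-moment identities linearly, checking that Assumption \ref{a:boundEv} (equivalently the corollary's entrywise hypothesis \eqref{e:delocalize}) indeed holds for each of these directions — for $\bme_i,\bme_j$ it is immediate from \eqref{e:delocalize}, and for $\bmq_\pm$ it follows by the triangle inequality from the $ii$, $jj$, and $ij$ entries. A secondary point is bookkeeping the dependence of $\fd$ on the various parameters, but that is routine given the uniformity already built into Theorem \ref{t:normal}.
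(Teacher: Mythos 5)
Your proposal is correct and follows essentially the same route as the paper's proof: a second-moment bound on $\frac{N}{\|{\bf a}\|_1}\sum_i a_i u_{ki}^2$ exploiting $\sum_i a_i=0$, with the covariances $\bE\big[(Nu_{ki}^2-1)(Nu_{kj}^2-1)\big]$ controlled by applying Theorem \ref{t:normal} to unit vectors supported on the coordinates $i,j$ (your polarization with $(\bme_i\pm\bme_j)/\sqrt 2$ is exactly what the paper's terse phrase ``taking $\bmq$ supported on $i$ and $j$-th coordinates'' means), followed by Markov's inequality. One small point of care: since Theorem \ref{t:normal} fixes a single direction $\bmq$, extract $\bE[N^2u_{ki}^2u_{kj}^2]$ from single-direction fourth moments only, e.g. via $12\,u_{ki}^2u_{kj}^2=4\langle\bmq_+,\bmu_k\rangle^4+4\langle\bmq_-,\bmu_k\rangle^4-2u_{ki}^4-2u_{kj}^4$, rather than from the two-direction cross moment $\bE[\langle\bmq_+,\bmu_k\rangle^2\langle\bmq_-,\bmu_k\rangle^2]$ mentioned in passing, which is exactly the linear combination of the four identities you describe at the end.
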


\noindent\textbf{Acknowledgements.}
The authors thank Antti Knowles for pointing out an error in an early version of this paper.

\section{Local Law} \label{s:locallaw}

In this section, we prove the following isotropic local law for the resolvent of $H_t$. We write $H_0=U_0 \Lambda_0 U_0^*$, where $\Lambda_0=\diag\{\lambda_1(0), \cdots, \lambda_N(0)\}$, and $U_0$ is the orthogonal matrix of its eigenvectors. Theorem \ref{t:isolaw} states that $G(t,z)$ is well approximated by $U_0\diag\{g_1(t,z), g_2(t,z),\cdots, g_N(t,z)\}U_0^*$ where $g_i$ are defined in \eqref{def_gi}. It implies that the Green function becomes regular after adding a small Gaussian component.
\begin{theorem}{\label{t:isolaw}}
 Under the Assumption \ref{a:boundImm}, fix $\kappa>0$. Then for any $\eta_*\ll t\ll r$ and any unit vector $\bmq=(q_1,q_2,\cdots, q_N)^*\in \bR^N$,  %and $N$ %$N\geq N(\fa)$ large enough, 
uniformly for any $z\in \cal D_\kappa$ (as in \eqref{e:dom}),  the following holds with overwhelming probability, 
 \begin{align}\label{e:isolaw}
 \left|\langle \bmq, G(t,z)\bmq\rangle - \sum_{i=1}^{N}\langle \bmu_i(0), \bmq\rangle^2g_i(t,z)\right|\leq \frac{\psi^2}{\sqrt{N\eta}}\Im\left[\sum_{i=1}^{N}\langle \bmu_i(0), \bmq\rangle^2g_i(t,z)\right] ,
 \end{align}
 provided $N$ is large enough, where $\bmu_i(0)$ are eigenvectors of $H_0$, and $g_i$ are defined in \eqref{def_gi}.
\end{theorem}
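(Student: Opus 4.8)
\noindent\emph{Proof strategy.}
The first observation is that the deterministic quantity on the right of \eqref{e:isolaw} is itself a resolvent: by \eqref{def_gi},
\begin{equation*}
\sum_{i=1}^N\langle\bmu_i(0),\bmq\rangle^2 g_i(t,z)=\langle\bmq,M_t(z)\bmq\rangle,\qquad M_t(z):=\big(H_0-z-t\mfct(z)\big)^{-1},
\end{equation*}
so \eqref{e:isolaw} is an \emph{isotropic} local law for the deformed matrix $H_t=H_0+(H_t-H_0)$ in which nothing is assumed about the eigenvectors of $H_0$; all regularity must be produced by the Gaussian increment. The plan is to track the scalar $\langle\bmq,G(s,z_s)\bmq\rangle$ along the flow \eqref{DBM} for $s\in[0,t]$, moving the spectral parameter along the characteristic $z_s:=z+(t-s)\mfct(z)$ of the free convolution. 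Using the flow identity $\partial_s m_{\mathrm{fc},s}=m_{\mathrm{fc},s}\,\partial_z m_{\mathrm{fc},s}$ one checks that $m_{\mathrm{fc},s}(z_s)\equiv\mfct(z)$ is constant, hence $\tfrac{\rd}{\rd s}z_s=-m_{\mathrm{fc},s}(z_s)$, $z_t=z$ and $z_0=z+t\mfct(z)$; in particular $G(0,z_0)=(H_0-z_0)^{-1}$ is deterministic and $\langle\bmq,G(0,z_0)\bmq\rangle=\langle\bmq,M_t(z)\bmq\rangle$ exactly. It therefore suffices to show that $s\mapsto\langle\bmq,G(s,z_s)\bmq\rangle$ barely changes over $[0,t]$.

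By It\^o's formula for \eqref{DBM} one obtains, writing $G=G(s,z_s)$ and $m_s=\frac1N\Tr G(s,z_s)$,
\begin{equation*}
\rd\langle\bmq,G(s,z_s)\bmq\rangle=\Big(\big(m_s-m_{\mathrm{fc},s}(z_s)\big)\langle\bmq,G^2\bmq\rangle+\tfrac1N\langle\bmq,G^3\bmq\rangle\Big)\rd s-\langle\bmq,G\,\rd H_s\,G\bmq\rangle,
\end{equation*}
the point being that the choice of characteristic removes the leading drift $m_s\langle\bmq,G^2\bmq\rangle$ exactly. The drift is then estimated by the \emph{averaged} local law $|m_s-m_{\mathrm{fc},s}(z_s)|\le N^{o(1)}/(N\Im z_s)$ for $H_s$, which under Assumption~\ref{a:boundImm} alone holds for $\Im z_s\ge\psi^4/N$ uniformly in $s\in[0,t]$ by the local law/rigidity estimates of \cite{ly} (this, via $\Im m_{\mathrm{fc},s}\asymp1$, is where the two‑sided bound on $\Im m_0$ enters), together with $|\langle\bmq,G^2\bmq\rangle|\le(\Im z_s)^{-1}\langle\bmq,\Im G\bmq\rangle$ and $\int_0^t(\Im z_s)^{-2}\rd s\asymp\eta^{-1}$; this contributes at most $N^{o(1)}(N\eta)^{-1}\sup_s\langle\bmq,\Im G(s,z_s)\bmq\rangle$. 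For the martingale term one uses $\|G(s,z_s)\bmq\|^2=(\Im z_s)^{-1}\langle\bmq,\Im G(s,z_s)\bmq\rangle$ to see that its bracket has rate $\lesssim\frac1N(\Im z_s)^{-2}\langle\bmq,\Im G(s,z_s)\bmq\rangle^2$, so by the same time integral and the Burkholder--Davis--Gundy inequality (promoted to overwhelming probability by a time grid plus high moments) it is bounded by $N^{o(1)}(N\eta)^{-1/2}\sup_s\langle\bmq,\Im G(s,z_s)\bmq\rangle$, which dominates the drift.

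Since $\langle\bmq,\Im G(s,z_s)\bmq\rangle$ equals $\Im\langle\bmq,M_s(z_s)\bmq\rangle\lesssim1$ plus the very error being bounded, one closes the argument by a continuity/bootstrap in $\eta$ over a polynomially fine net in $z\in\mathcal D_\kappa$ (licit because $\|\partial_z G\|\le\eta^{-2}$ is only polynomial in $N$): at $\Im z\asymp1$ the claim is trivial ($\|G(t,z)-M_t(z)\|\lesssim\sqrt t\ll1$), and the weak a priori input $\langle\bmq,\Im G(s,z_s)\bmq\rangle\le\psi\,\Im\langle\bmq,M_s(z_s)\bmq\rangle$ fed into the two estimates above yields an error of size $N^{o(1)}\psi\,(N\eta)^{-1/2}\Im\langle\bmq,M_t(z)\bmq\rangle\le\psi^2(N\eta)^{-1/2}\Im\langle\bmq,M_t(z)\bmq\rangle$, which re‑improves the a priori bound because $\psi^2\le\sqrt{N\eta}$ throughout $\mathcal D_\kappa$, so the descent reaches $\eta=\psi^4/N$. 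The main difficulty is exactly this self‑improving step carried out with no isotropic a priori control at $s=0$: one must arrange that the exact cancellation from the characteristic and the $(\Im z_s)^{-2}$ time weight together convert a crude bound into the sharp $\psi^2(N\eta)^{-1/2}$ one, with careful bookkeeping of the powers of $\psi$ along the flow. A purely static route is also available and gives the same bound: for fixed $t$ write $G=M_t-M_t\big(t\mfct+(H_t-H_0)\big)G$, evaluate $\bE\langle\bmq,M_t(H_t-H_0)G\bmq\rangle$ by Gaussian integration by parts (the leading term cancelling $t\mfct\langle\bmq,M_tG\bmq\rangle$), and control the remaining fluctuation by the $2p$‑moment fluctuation‑averaging method together with the same continuity in $\eta$.
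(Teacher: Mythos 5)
Your route is genuinely different from the paper's. The paper argues statically at the single time $t$: using orthogonal invariance of the Gaussian increment it reduces to $H_0$ diagonal, splits $\langle \bmq,G\bmq\rangle$ into $\sum_i G_{ii}q_i^2$ (controlled by the entrywise law of Theorem \ref{p:resoentry} together with Proposition \ref{p:sumqg}) and the off-diagonal sum $\cal Z=\sum_{i\neq j}G_{ij}q_iq_j$, and bounds $\bE|\cal Z|^{2k}$ by a Schur-complement expansion $[G]_{\bT}=(D-\cal E)^{-1}$ plus explicit Gaussian moment combinatorics (Proposition \ref{p:weakCor}). Your proposal instead runs the stochastic-advection/characteristics argument: $z_s=z+(t-s)\mfct(z)$ does make $m_{{\rm fc},s}(z_s)$ and even $M_s(z_s)=(H_0-z-t\mfct(z))^{-1}$ constant along the flow, the It\^o drift cancellation and the Ward-identity bounds on the drift and the martingale bracket are correct, and the multiplicative bootstrap closes precisely because $\psi^2\leq\sqrt{N\eta}$ on $\cal D_\kappa$. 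The observation that $G(0,z_0)$ is deterministic and equals the target, so no isotropic information on $H_0$ is needed, is exactly the right reason this can work under Assumption \ref{a:boundImm} alone. What the paper's method buys is that it only needs local laws for the single matrix $H_t$; what yours buys is the avoidance of the moment combinatorics and an estimate valid along the whole flow.

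Two steps, however, are not justified by the inputs you invoke. First, your drift bound needs $|m_s(z_s)-m_{{\rm fc},s}(z_s)|\leq \psi(N\Im z_s)^{-1}$ uniformly for \emph{all} $s\in[0,t]$, but Theorem \ref{t:rig} (the result of \cite{ly} as quoted here) only applies when the time parameter satisfies $s\gg\eta_*$; for $s\lesssim\eta_*$ you need the averaged law for $H_s$ at the larger scale $\Im z_s\asymp t+\eta$, and neither the crude perturbation bound $|m_s-m_0|\lesssim\sqrt{s}/(t+\eta)$ nor the trivial bound $2/\Im z_s$ suffices once integrated against $(\Im z_s)^{-1}$ (e.g. $\eta\asymp1$, $\eta_*=N^{-1/4}$, $t=N^{-1/8}$). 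This estimate is true, but it requires a separate argument, not a citation. Second, the anchoring of the bootstrap at $\Im z\asymp 1$ is not ``trivial'' in the form you need: the claim there is $|\langle\bmq,(G-M_t)\bmq\rangle|\leq \psi^2N^{-1/2}\Im\langle\bmq,M_t\bmq\rangle$, and under Assumption \ref{a:boundImm} alone $\Im\langle\bmq,M_t\bmq\rangle$ can be as small as $N^{-2\fa}$ (and, contrary to your parenthetical, as large as $(t+\eta)^{-1}$), so the operator-norm bound $\|G-M_t\|\lesssim\sqrt t$ gives nothing multiplicative. What does work is a short self-consistent Ward-identity argument, e.g. $|\langle\bmq,(G-M_t)\bmq\rangle|\lesssim\sqrt{t}\,\|G\bmq\|\,\|M_t\bmq\|\lesssim\sqrt{t}\,\big(\Im\langle\bmq,G\bmq\rangle/\eta\big)^{1/2}\big(\Im\langle\bmq,M_t\bmq\rangle/(t+\eta)\big)^{1/2}$, which yields $\Im\langle\bmq,G\bmq\rangle\asymp\Im\langle\bmq,M_t\bmq\rangle$ at $\eta\asymp1$; you should also note that the descent must be run jointly over the time $s$ and the spectral parameter (the a priori input is needed for $G(s,z_s)$ at intermediate times, not only for $G(t,\cdot)$ at the final time). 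With these repairs the dynamical proof goes through, but as written these are genuine gaps relative to what the cited results provide.
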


\subsection{Rigidity of Eigenvalues}
In \cite{ly}, the eigenvalues of $H_t$ are detailed studied under the Assumption \ref{a:boundImm}. In this section we recall some estimates on the locations of eigenvalues from \cite{ly}. For the free convoluted density $\rho_{{\rm fc},t}$, we have the following deterministic estimate on its Stieltjes transform and classical eigenvalue locations (as in \eqref{def_gi} and \eqref{eqn:eigloc}) from \cite[Lemma 7.2]{ly}.
\begin{proposition}{\label{p:mfc}}
Under the Assumption \ref{a:boundImm}, fix $\kappa>0$. Then for any $\eta_*\ll  t\ll r$ and  $N$ %$N\geq N(\fa)$ 
large enough, the following holds: uniformly for $z\in \{E+\ri \eta: E\in I_\kappa^r(E_0), 0<\eta\leq 1-\kappa r\}$, the Stieltjes transform $\mfct$, 
 \begin{align}\label{e:dbound}
  C^{-1}\leq \Im[\mfct(z)]\leq C,
 \end{align}
 and
 \begin{align}\label{e:sumgibound}
  |\mfct(z)|\leq \frac{1}{N}\sum_{i=1}^N |g_i(t,z)|\leq C\log N,
 \end{align}
where $C$ is a constant depending on the constant $\frak{a}$ in Assumption \ref{a:boundImm}, and $g_i(t,z)$ are as in \eqref{def_gi}; for the classical eigenvalue locations, uniformly for any index $i$ such that $\gamma_i(t)\in I_r^{\kappa}(E_0)$, we have
\begin{align}\label{e:dergamma}
%{\cor |\gamma_{i}(t')-\gamma_i(t)| \leq C\log N |t'-t|.}
|\del_t \gamma_{i}(t)| \leq C\log N. 
\end{align}
 \end{proposition}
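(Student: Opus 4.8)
The plan is to read off everything from the self-consistent equation \eqref{def_gi}, namely $\mfct(z)=m_0(w)$ with the shifted spectral parameter $w=w(z):=z+t\mfct(z)$, and to transfer Assumption~\ref{a:boundImm} from $m_0$ to $\mfct$ by controlling $w$. The crucial gain of the free convolution is that, once we establish $\Im\mfct(z)\asymp 1$, the shifted parameter has $\Im w=\eta+t\,\Im\mfct(z)\gtrsim t$, so $w$ stays at distance $\gtrsim t$ from the real axis even as $\eta\to 0$; this removes all boundary issues. To make this rigorous I would run a bootstrap/continuity argument on the (connected) rectangle $\{E+\ri\eta : E\in I^r_\kappa(E_0),\ 0<\eta\leq 1-\kappa r\}$ for the statement $S(z)$: ``$(2\fa)^{-1}\leq\Im\mfct(z)\leq 2\fa$ and $|\mfct(z)|\leq 2C_0\fa\log N$''. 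On the top slice $\eta=1-\kappa r$ the crude bound $|\mfct(z)|\leq 1/\Im w\leq 1/\eta$ yields $S(z)$ after one application of Assumption~\ref{a:boundImm} (here $t\ll r$ guarantees $\Re w\in[E_0-r,E_0+r]$ and $\Im w\leq 1$); $S$ defines a relatively closed, nonempty subset of the rectangle by continuity of $\mfct$; and the improvement step below shows $S(z)$ forces the strictly stronger bounds, hence the subset is also open. By connectedness, $S$ --- and then \eqref{e:dbound} and \eqref{e:sumgibound} --- hold throughout the rectangle.

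For the improvement step, assume $S(z)$. Then $|\Re w-E|\leq t|\mfct(z)|\ll\kappa r$, so $\Re w\in[E_0-r,E_0+r]$, while $\Im w\in[\eta+\tfrac{t}{2\fa},\,\eta+2\fa t]\subseteq[\eta_*,1]$; thus Assumption~\ref{a:boundImm} applies at $w$ and at every point $\Re w+\ri s$ with $s\in[\eta_*,1]$. In particular $\Im\mfct(z)=\Im m_0(w)\in[\fa^{-1},\fa]$, which is \eqref{e:dbound}. For \eqref{e:sumgibound}, write $g_i(t,z)=(\lambda_i(0)-w)^{-1}$ and split $\frac1N\sum_i|g_i|$ according to $|\lambda_i(0)-\Re w|\leq\Im w$, then dyadically over $\Im w<|\lambda_i(0)-\Re w|\leq 1$, and finally $|\lambda_i(0)-\Re w|>1$. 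On the first range $|g_i|\leq 1/\Im w$ and, since $\Im m_0(w)\geq\frac{1}{2N\Im w}\#\{i:|\lambda_i(0)-\Re w|\leq\Im w\}$, the cardinality is $\leq 2\fa N\Im w$, contributing $O(\fa)$; on the dyadic scale $2^{-k}\in[\Im w,1]$ one has $|g_i|\leq 2^{k+1}$ and $\#\{i:|\lambda_i(0)-\Re w|\leq 2^{-k}\}\leq 2\fa N 2^{-k}$ by Assumption~\ref{a:boundImm} at $\Re w+\ri 2^{-k}$, so each scale contributes $O(\fa)$ and there are $O(\log(1/\Im w))=O(\log N)$ of them (this uses $\Im w\gtrsim t\geq\eta_*\geq N^{-1+c}$); the far range contributes $O(1)$ by a crude dyadic bound using $\|H_0\|\leq N^{\fa}$. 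Hence $\frac1N\sum_i|g_i(t,z)|\leq C_0\fa\log N$, and $|\mfct(z)|\leq\frac1N\sum_i|g_i(t,z)|$ by the triangle inequality in \eqref{def_gi}, which closes the bootstrap and proves \eqref{e:sumgibound}.

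For $\partial_t\gamma_i(t)$ I would use the complex Burgers equation $\partial_t\mfct=\mfct\,\partial_z\mfct$, obtained by differentiating \eqref{def_gi} in $t$ and in $z$ and eliminating $m_0'(w)$. Taking imaginary parts on the real axis (where $\mfct(x+\ri 0)=\Re\mfct+\ri\pi\rfct(x)$) turns it into the continuity equation $\partial_t\rfct+\partial_x(\rfct\,v_t)=0$ with velocity $v_t(x)=-\Re\mfct(x+\ri 0)$. Differentiating the identity $\int_{-\infty}^{\gamma_i(t)}\rfct(y)\,\rd y=i/N$ in $t$ and integrating the continuity equation (the boundary term at $-\infty$ vanishes since $\rfct$ is compactly supported) gives $\partial_t\gamma_i(t)=v_t(\gamma_i(t))=-\Re\mfct(\gamma_i(t)+\ri 0)$, hence $|\partial_t\gamma_i(t)|\leq|\mfct(\gamma_i(t)+\ri 0)|\leq C\log N$ by \eqref{e:sumgibound}, whenever $\gamma_i(t)\in I^r_\kappa(E_0)$. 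There $\rfct$ is bounded below by $\fa^{-1}/\pi$ (by \eqref{e:dbound} passed to $\eta=0$, which is legitimate because the evaluation point $w$ stays at distance $\gtrsim t$ from the axis), so $\gamma_i(t)$ lies in the interior of the support and, using the analyticity of $\rfct$ on its support, is a smooth function of $t$, which legitimizes the differentiation above.

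The main obstacle is keeping the bootstrap genuinely self-contained at small $\eta$: one must check that the a priori input used to start the continuity argument at the top slice is unconditional, and that $\Re w=E+t\,\Re\mfct(z)$ never leaves $[E_0-r,E_0+r]$ as $\eta$ decreases --- this is exactly where the hypothesis $t\ll r$ and the $\kappa r$ of slack built into $I^r_\kappa(E_0)$ enter, and where one verifies that the $\log N$ in $|\mfct|\lesssim\log N$ is too small to push $w$ out of the good region (indeed $t\log N\ll r$, since $t\ll r$ and $\log N\ll N^{\fc}$). A secondary point requiring care is bookkeeping the logarithm: it is intrinsic (the number of dyadic scales between $\Im w\sim t$ and $1$), so the bootstrap constants must be arranged so that the derived bound $C_0\fa\log N$ strictly beats the hypothesized $2C_0\fa\log N$, uniformly in $z$.
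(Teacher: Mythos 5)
Your argument is correct, and its middle part coincides with the paper's: the bound \eqref{e:sumgibound} is obtained there by exactly your dyadic decomposition around the shifted point $z+t\mfct(z)$, with the block counts controlled by the upper bound on $\Im[m_0]$ from Assumption \ref{a:boundImm} and the far region ($|\lambda_i(0)-\Re w|\geq 1$) contributing $O(1)$ trivially (so your appeal to $\|H_0\|\leq N^{\fa}$ there is unnecessary, though harmless). Where you genuinely diverge is at the two ends. For \eqref{e:dbound} the paper does not argue at all: it quotes \cite[Lemma 7.2]{ly} verbatim, whereas you re-prove it by a self-consistent bootstrap on the rectangle, initializing at the top slice with the trivial Stieltjes bound and using $t\log N\ll r$, $t\gg\eta_*$ to keep $w=z+t\mfct(z)$ inside the domain of Assumption \ref{a:boundImm}; this buys a self-contained proof at the cost of the continuity-argument bookkeeping you carry out (which is done correctly, including the strict-improvement of the $\log N$ constant needed for openness). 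For \eqref{e:dergamma} the paper simply asserts the identity $|\del_t\gamma_i(t)|=|\Re[\mfct(\gamma_i(t))]|$ and bounds it by \eqref{e:sumgibound}, treating the characteristic/velocity formula for the free convolution as known; you derive it from the complex Burgers equation and the induced continuity equation, dividing by $\rfct(\gamma_i(t))>0$, which you justify from \eqref{e:dbound} and the interior regularity of $\rfct$ (for full rigor one would lean on the boundary regularity of the free convolution from \cite{MR1488333}, exactly the reference the paper invokes). So: same core estimate, but your version trades two external citations for explicit, slightly longer arguments.
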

\begin{proof}
\eqref{e:dbound} is the same as \cite[(7.7) Lemma 7.2]{ly}.  For \eqref{e:sumgibound}, we denote $\tilde E+\i\tilde\eta := z+t\mfct(z)$, and divide the sum into the following dyadic regions: 
\begin{align*}
U_{0}=\{i: |\lambda_i(0)-\tilde E|\leq \tilde \eta\},\quad U_n=\{i: 2^{n-1}\tilde \eta< |\lambda_i(0)-\tilde E|\leq 2^n \tilde \eta \}, \quad 1\leq n\leq \lceil-\log_2(\tilde \eta)\rceil.
\end{align*}
For the eigenvalues which do not belong to $\cup_n U_n$, we have $|\lambda_i(0)-\tilde{E}|\geq1$.  Since $\tilde \eta\gtrsim t\gg \eta_*$, we have
\begin{align*}
|U_n|\leq \sum_{i=1}^N\frac{2(2^n\tilde\eta)^2}{|\lambda_i(0)-\tilde E- \ri2^n \tilde \eta |^2}\leq 2\Im[m_0(\tilde{E}+2^n\ri\tilde{\eta})]2^{  n}\tilde\eta N\leq C2^{  n} \tilde\eta N.
\end{align*}
Thus we can bound  \eqref{e:sumgibound} 
\begin{align}\label{e:dyadic}
\frac{1}{N}\sum_{i=1}^{N}|g_i|
\leq &  \frac{1}{N}\sum_{n=0}^{\lceil \log_2 N\rceil}\sum_{i\in U_n}\frac{1}{|\lambda_i(0)-\tilde E-\ri\tilde \eta|} +1
\leq \frac{1}{N}\sum_{n=0}^{\lceil- \log_2 \tilde \eta\rceil}\frac{|U_n|}{2^{n-1}\tilde \eta}+1\leq C\log N.
\end{align}
Finally for \eqref{e:dergamma}, we have $|\del_t \gamma_i(t)|=|\Re[\mfct(\gamma_i(t))]|\leq C\log N$.
\end{proof}

The following result on eigenvalue rigidity estimates of $H_t$ is from \cite[Theorem 3.3]{ly}. 
\begin{theorem}{\label{t:rig}}
Under the Assumption \ref{a:boundImm}, fix $\kappa>0$. Then for any  $\eta_*\ll t\ll r$, and  $N$ %$N\geq N(\fa)$ 
large enough, with overwhelming probability, the followings hold:
 \begin{align}\label{eqn:locallaw}
  |m_t(z)-\mfct(z)|\leq \psi (N\eta)^{-1}
 \end{align}
 uniformly for $z\in \cal D_\kappa$; and for the eigenvalues, 
 \begin{align*}
 |\lambda_{i}(t)-\gamma_{i}(t)|\leq\psi N^{-1},
 \end{align*}
 uniformly for any index $i$ such that $\lambda_i(t)\in I_r^{\kappa}(E_0)$.
\end{theorem}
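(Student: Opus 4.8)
\textbf{Proof strategy for Theorem \ref{t:rig}.} The statement to prove is the local law \eqref{eqn:locallaw} and the eigenvalue rigidity for $H_t$, which the excerpt attributes to \cite[Theorem 3.3]{ly}. Since this is quoted from prior work, I would prove it by the standard continuity/stochastic-advection argument for Dyson Brownian motion with general initial data. The plan is to compare $m_t(z)$ directly with $\mfct(z)$ defined through the self-consistent equation \eqref{def_gi}, using the fact that both satisfy (exactly, resp. approximately) a transported self-consistent equation, and then convert the convergence of Stieltjes transforms to rigidity of individual eigenvalues via a Helffer--Sj\"ostrand / counting-function argument.

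\textbf{Step 1: characteristic flow and the self-consistent equation.} First I would recall that $\mfct$ satisfies the complex Burgers-type equation obtained by differentiating \eqref{def_gi} in $t$: $\partial_t \mfct(z) = \mfct(z)\,\partial_z \mfct(z)$, so that $\mfct$ is constant along the characteristic curves $z_t$ solving $\dot z_t = -\mfct(z_t)$, with $\mfct(z_t)=m_0(z_0)$. Under Assumption \ref{a:boundImm}, Proposition \ref{p:mfc} already gives $\Im[\mfct(z)]\asymp 1$ and the dyadic bound $\frac1N\sum_i|g_i(t,z)|\le C\log N$ on the relevant domain, which controls the Lipschitz constants of the flow and keeps the characteristics inside a slightly enlarged domain for $t\ll r$. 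On the random side, It\^o's formula applied to $m_t(z) = \frac1N\Tr(H_t-z)^{-1}$ along the characteristics produces a self-consistent equation for $m_t(z_t)$ with a martingale term and a second-order (quadratic variation) correction term; the martingale increments have variance of order $(N\eta)^{-2}$ per unit time after integrating, so by Burkholder--Davis--Gundy and a union bound over a fine net of $z$'s the fluctuation is $O(\psi/(N\eta))$ with overwhelming probability.

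\textbf{Step 2: stability and bootstrap.} Writing $v(z) := m_t(z) - \mfct(z)$, Steps 1 gives a self-consistent inequality of the form $|v(z)| \le$ (deterministic error $\sim \psi/(N\eta)$) $+$ (stability factor) $\times |v|^2$, where the stability of the equation \eqref{def_gi} is governed by $1 - \frac1N\sum_i g_i(t,z)^2$, bounded away from $0$ because $\Im[\mfct]\asymp 1$. A standard continuity (bootstrap) argument in $\eta$ — starting at $\eta = 1-\kappa r$ where $|v|$ is trivially small and decreasing $\eta$ down to $\psi^4/N$, using that $|v|$ cannot jump — upgrades the a priori weak bound to $|v(z)|\le \psi/(N\eta)$ throughout $\cal D_\kappa$, which is \eqref{eqn:locallaw}.

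\textbf{Step 3: from local law to rigidity.} With $|m_t(z)-\mfct(z)|\le \psi/(N\eta)$ in hand, I would bound the difference of counting functions $|\#\{i:\lambda_i(t)\le E\} - N\int_{-\infty}^E \rfct|$ by a Helffer--Sj\"ostrand representation, integrating $\Im[m_t - \mfct]$ against a smoothed indicator up to the scale $\eta\sim\psi/N$; combined with the lower bound $\rfct\gtrsim 1$ on $I_\kappa^r(E_0)$ from \eqref{e:dbound} this yields $|\lambda_i(t)-\gamma_i(t)|\le \psi N^{-1}$ for all $i$ with $\lambda_i(t)\in I_r^\kappa(E_0)$, after a short argument controlling how far eigenvalues can be pushed using \eqref{e:dergamma}.

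\textbf{Main obstacle.} The delicate point is the martingale/fluctuation estimate in Step 1 together with propagating it uniformly down to the optimal scale $\eta\sim\psi^4/N$: one must control the quadratic variation of $\Tr(H_t-z_t)^{-1}$ along characteristics that themselves move, handle the short-time regime $t\ll r$ where the free convolution has only just begun to regularize (so a priori bounds on $\Im m_t$ are weak near $\eta_*$), and take the union bound over a net without losing the factor $\psi$. Everything else — the complex Burgers equation, the stability analysis, and the Helffer--Sj\"ostrand step — is by now routine, but getting the self-consistent estimate to hold with overwhelming probability uniformly in the full domain $\cal D_\kappa$ under only the weak local Assumption \ref{a:boundImm} is where the real work lies, and is precisely what \cite{ly} carries out.
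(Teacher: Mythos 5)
The paper gives no proof of Theorem \ref{t:rig}: both the averaged local law \eqref{eqn:locallaw} and the rigidity bound are imported verbatim from \cite[Theorem 3.3]{ly}, with the deterministic facts about $\mfct$ coming from \cite[Lemma 7.2]{ly} (reproduced here as Proposition \ref{p:mfc}), so there is no in-paper argument to compare with, and you were right to treat the statement as something to be re-proved from scratch. Your sketch is a legitimate route, but it is not the route of the cited source: \cite{ly} analyzes $H_t\stackrel{d}{=}\Lambda_0+\sqrt t W$ as a deformed GOE at the \emph{fixed} time $t$, proving the entrywise law with minors (quoted here as Theorem \ref{p:resoentry}) by Schur complements, large-deviation estimates for the Gaussian quadratic forms, a self-consistent equation for $\frac1N\sum_i G_{ii}$ with a stability analysis, and then deduces \eqref{eqn:locallaw} and rigidity via a Helffer--Sj\"ostrand/counting argument essentially as in your Step 3. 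Your Steps 1--2 instead use the dynamical (stochastic advection) picture: It\^o's formula for $m_s$ along the characteristics $\dot z_s=-m_{{\rm fc},s}(z_s)$, BDG plus a net, and a Gronwall-type closing argument. That method does work under Assumption \ref{a:boundImm} and is used in later works on DBM local laws; the static approach of \cite{ly} has the advantage of also delivering the entrywise information with minors that this paper genuinely needs as input for Theorem \ref{t:isolaw}, which your averaged argument would not produce by itself.

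Two points in your outline are glossed over and would need repair in a complete write-up. First, the linearization of \eqref{def_gi} gives the stability factor $1-\frac tN\sum_i g_i(t,z)^2=1-t\,m_0'(z+t\mfct(z))$ (you dropped the factor $t$), and its lower bound is \emph{not} an immediate consequence of $\Im[\mfct]\asymp1$: the triangle-inequality bound only gives
\begin{align*}
\Bigl|1-\tfrac tN\sum_i g_i^2\Bigr|\;\geq\;1-\tfrac tN\sum_i|g_i|^2\;=\;\frac{\eta}{\eta+t\,\Im[\mfct]}\;,
\end{align*}
which degenerates like $\eta/t$ exactly at the scales $\eta\ll t$ you need; excluding $\frac tN\sum_i g_i^2\approx 1$ requires using the regularity of $\rho_0$ down to scale $\eta_*\ll t$, and this is part of the real content of the cited proof rather than a remark. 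Second, your Step 2 mixes the two methods: if the equation is derived by It\^o along characteristics, the natural closing argument is a stopping-time/Gronwall bound in time along each characteristic (with a union bound over starting points), whereas a bootstrap in $\eta$ at fixed $t$ belongs to the static self-consistent-equation proof; either can be carried out, but as written the derivation and the continuity argument do not fit together, and you should commit to one of them.
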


\subsection{Isotropic Local Law}

Before we start proving Theorem \ref{t:isolaw}, we need some reductions. We write $H_0$ as $H_0=U_0 \Lambda_0 U_0^*$, where $\Lambda_0=\diag\{\lambda_1(0), \cdots, \lambda_N(0)\}$, and $U_0$ is the orthogonal matrix of its eigenvectors. Since $H_t\stackrel{d}{=}H_0+\sqrt{t}W$, where $W$ is a standard Gaussian orthogonal ensemble, i.e., $W=(w_{ij})_{1\leq i\leq j\leq N}$ is symmetric with $(w_{ij})_{1\leq i\leq j\leq N}$ a family of independent Brownian motions of variance $(1+\delta_{ij})/N$, we have the following equality in law:
\begin{align*}
\langle \bmq, G(t,z)\bmq \rangle
=& \langle \bmq,(U_0\Lambda_0U_0^*+\sqrt{t}W-z)^{-1}\bmq \rangle
=\langle \bmq,U_0(\Lambda_0+\sqrt{t}U_0^*WU_0-z)^{-1}U_0^*\bmq \rangle \\
\stackrel{d}{=}&
\langle \bmq, U_0(\Lambda_0+\sqrt{t}W-z)^{-1}U_0^* \bmq\rangle=\langle U_0^*\bmq, (\Lambda_0+\sqrt{t}W-z)^{-1} U_0^*\bmq\rangle.
\end{align*}
Therefore, Theorem \ref{t:isolaw} can be reduced to the case that $H_t=\Lambda_0+\sqrt{t}W$:
  \begin{align}
 \left|\langle \bmq, G(t,z)\bmq\rangle - \sum_{i=1}^{N}q_i^2g_i(t,z)\right|\leq \frac{\psi^2}{\sqrt{N\eta}}\Im\left[\sum_{i=1}^{N}q_i^2g_i(t,z)\right] .
 \end{align}

The entry-wise local law of the matrix ensemble $\Lambda_0+\sqrt{t}W$ (so called deformed Gaussian orthogonal ensemble) was studied in \cite{ly}. In the following we recall some estimates on the entry-wise local law from \cite[Theorem 3.3]{ly}. To state it we need to introduce some notations. For any index set $\bT\in\qq{1,N}$, we denote $[H_t]_{i,j\notin\bT}$ the minor of $H_t$ by removing the columns and rows indexed by $\bT$, and its resolvent by $G^{(\bT)}(t,z):=([H_t]_{i,j\notin \bT}-z)^{-1}$. Recall the definition of $g_i$ from \eqref{def_gi}:
\begin{align}\label{def:gi}
g_i(t,z)=\frac{1}{\lambda_i(0)-z-t\mfct(z)}.
\end{align}
For the simplicity of notation, if the context is clear, we may simply write $g_i(t,z)$ as $g_i$. Roughly speaking, the following theorem states that the resolvent matrix $G(t,z)$ is close to the diagonal matrix $\diag\{g_1,g_2,\cdots, g_{N}\}$.
 
\begin{theorem}\label{p:resoentry}
The initial matrix $H_0=\mathrm{diag}\{\lambda_1(0),\lambda_2(0),\cdots, \lambda_N(0)\}$ satisfies Assumption \ref{a:boundImm} and fix $\kappa>0$. Then for any $\eta_*\ll t\ll r$ and $N$ %$N\geq N(\fa)$ 
large enough, with overwhelming probability, the following hold. Uniformly for any $z\in \cal D_\kappa$: for the diagonal resolvent entries,
\begin{align}\label{e:diagbound}
\left|G_{ii}^{(\bT)}(t,z)-g_i(t,z)\right| \leq \frac{\psi t}{\sqrt{N\eta}}|g_i(t,z)|^2,
\end{align}
and for the off-diagonal resolvent entries,
\begin{align}\label{e:Gij}
\left|G_{ij}^{(\bT)}(t,z)\right|\leq \frac{\psi}{\sqrt{N\eta}}\min\{|g_i(t,z)|,|g_j(t,z)|\}\leq \frac{\psi}{\sqrt{N\eta}}\left(|g_i(t,z)||g_j(t,z)|
\right)^{1/2},
\end{align}
where $\bT$ is any index set of size $|\bT|\leq \log N$.
\end{theorem}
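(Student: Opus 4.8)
The plan is to prove Theorem~\ref{p:resoentry} (the entry-wise local law for $\Lambda_0+\sqrt t W$) by the standard self-consistent-equation/fluctuation-averaging machinery, adapted to track the extra factors of $t$ and $|g_i|$. First I would set up the Schur complement / resolvent expansion: for each index $i$ (and each removed set $\bT$ with $i\notin\bT$), write
\begin{align*}
\frac{1}{G_{ii}^{(\bT)}(t,z)} = \lambda_i(0)-z-t\,w_{ii}' - t\sum_{k,l\notin\bT\cup\{i\}} w_{ik}G_{kl}^{(\bT\cup\{i\})}w_{li},
\end{align*}
where the $w_{ik}$ are the $N^{-1/2}$-scaled Gaussian entries. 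The diagonal defect $g_i^{-1}-(G_{ii}^{(\bT)})^{-1}$ then splits into (a) the diagonal Gaussian contribution $t w_{ii}'$, of size $\OO(\sqrt t/\sqrt N)$; (b) the ``expected'' part $t\sum_k w_{ik}^2 G_{kk}^{(\bT\cup\{i\})} - t\mfct$, for which the concentration of $\sum_k w_{ik}^2$ around $1/N$ plus the a priori control of $\frac1N\sum_k G_{kk}$ via \eqref{e:sumgibound} and the rigidity input \eqref{eqn:locallaw} of Theorem~\ref{t:rig} gives a bound of order $\psi t/\sqrt{N\eta}$ once multiplied back by $|g_i|^2$; and (c) the genuinely off-diagonal sum $t\sum_{k\neq l} w_{ik}G_{kl}^{(\bT\cup\{i\})} w_{li}$, handled by a large-deviation bound for quadratic forms in Gaussians (Hanson–Wright), which produces $t\big(\frac1{N^2}\sum_{k\neq l}|G_{kl}^{(\bT\cup\{i\})}|^2\big)^{1/2}$; by the Ward identity $\sum_l |G_{kl}|^2 = \Im G_{kk}/\eta$ this is $\OO\!\big(t(N\eta)^{-1/2}(\frac1N\sum_k\Im G_{kk})^{1/2}\big)$. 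Iterating, one closes the bootstrap on the quantity $\Lambda_d := \max_i |G_{ii}^{(\bT)}-g_i|/|g_i|^2$ and $\Lambda_o := \max_{i\neq j}|G_{ij}^{(\bT)}|/(|g_i||g_j|)^{1/2}$.

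The second ingredient is the fluctuation-averaging step needed to upgrade the self-consistent estimate for $\frac1N\sum_i G_{ii}$ (equivalently $m_t-\mfct$) from the ``trivial'' $\psi/\sqrt{N\eta}$ to the optimal $\psi/(N\eta)$, which is exactly the content of \eqref{eqn:locallaw} that Theorem~\ref{t:rig} already supplies; so in fact I can \emph{import} that and use it as the external a priori bound on $m_t-\mfct$, avoiding redoing fluctuation averaging. With $|m_t-\mfct|\leq \psi/(N\eta)$ in hand, the self-consistent equation for each $G_{ii}$ becomes a genuine perturbative equation, and a continuity/bootstrap argument in $\eta$ (starting from large $\eta\sim 1$, where everything is $\OO(1)$ and the bounds are trivial, and decreasing $\eta$ down to $\psi^4/N$ along a fine grid, using Lipschitz continuity of $G$ in $z$ with the norm bound $\|H_0\|\leq N^{\fa}$) propagates \eqref{e:diagbound} and \eqref{e:Gij}. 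The minors $G^{(\bT)}$ with $|\bT|\leq \log N$ are controlled simultaneously because removing $\OO(\log N)$ rows perturbs resolvent entries only by lower-order terms (standard minor-expansion identities), so the whole family can be run through the same induction.

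The main obstacle I expect is bookkeeping the $t$-dependence correctly: unlike the usual Wigner local law where the analogous error is $\psi/\sqrt{N\eta}$ uniformly, here the noise has variance $t$, and the claim is the \emph{sharper} bound $\psi t/\sqrt{N\eta}$ on the diagonal (weighted by $|g_i|^2$) — this is what makes $G(t,z)$ ``become regular'' and is crucial downstream for the isotropic law's error $\psi^2/\sqrt{N\eta}$. Getting this requires being careful that every stochastic term genuinely carries a full factor of $t$ (the quadratic form $t\sum w_{ik}G_{kl}w_{li}$ does; the subtlety is that $|g_i|$ can be as large as $1/\Im[\tilde z]\sim 1/t$ near the edge of a dyadic block, so $t|g_i|^2$ is not automatically small and one must use the $\frac1N\sum|g_i|\leq C\log N$ bound of Proposition~\ref{p:mfc} rather than pointwise control). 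A secondary difficulty is that $\Im\mfct$ and the $g_i$ are only controlled on the window $I_\kappa^r(E_0)$ down to scale $\eta_*$, whereas the argument for $G_{ii}$ at spectral parameter $z$ involves $\lambda_i(0)$ far from $E_0$; there one uses instead the crude bound $|\lambda_i(0)-\tilde E|\geq 1$ from the dyadic decomposition in the proof of Proposition~\ref{p:mfc}, so those terms are harmless. Once these $t$-factors are tracked, the rest is the classical argument of \cite{MR3103909,MR3183577,ly}.
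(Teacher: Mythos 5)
You cannot be compared against an in-paper argument here, because the paper does not prove Theorem \ref{p:resoentry}: it is recalled verbatim from \cite[Theorem 3.3]{ly} (see the sentence introducing it), and Theorem \ref{t:rig} is quoted from the same source. Your outline --- Schur complement for each minor, Gaussian quadratic-form large deviations, the Ward identity, a bootstrap/continuity argument in $\eta$ down from $\eta\sim 1$, and simultaneous control of all minors with $|\bT|\leq\log N$ --- is the standard route and is essentially the argument of the cited reference, so in spirit it is the right reconstruction. One structural caveat: importing the averaged bound \eqref{eqn:locallaw} from Theorem \ref{t:rig} is legitimate inside this paper (where \ref{t:rig} is a quoted black box), but in the source both estimates come out of one joint scheme, and the optimal $\psi/(N\eta)$ accuracy for $m_t-\mfct$ is itself obtained from (weaker) entrywise control by fluctuation averaging; a genuinely self-contained proof has to run the two bootstraps together rather than cite one half to prove the other.

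The concrete gap is in the diagonal bookkeeping, at exactly the point you flag as the main difficulty but do not check. In the Schur complement the diagonal noise enters as $\sqrt{t}\,w_{ii}$ with $w_{ii}\sim\mathcal{N}(0,2/N)$, i.e.\ a term of typical size $\sqrt{t/N}$ in $1/G^{(\bT)}_{ii}-1/g_i$ (it carries only $\sqrt{t}$, not a full factor of $t$). Your bootstrap target $\Lambda_d=\max_i|G^{(\bT)}_{ii}-g_i|/|g_i|^2\leq \psi t/\sqrt{N\eta}$ therefore cannot close on all of $\cal D_\kappa$: one has $\sqrt{t/N}\leq \psi t/\sqrt{N\eta}$ only when $\eta\leq\psi^2 t$, while \eqref{e:dom} allows $\eta$ up to order one and $t\ll r\leq 1$. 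Indeed, expanding to second order around $\diag(g_i)$ one finds $G_{ii}-g_i=-g_i^2\sqrt{t}\,w_{ii}+(\text{smaller fluctuations})$, and since $w_{ii}$ is independent of everything else there is no cancellation; so for $\eta\gg\psi^2 t$ the bound \eqref{e:diagbound} as written is exceeded with probability of order one, and no proof can deliver it verbatim on that part of the domain. What your scheme does prove is $|G^{(\bT)}_{ii}-g_i|\lesssim \psi\,|g_i|^2\bigl(\sqrt{t/N}+t/\sqrt{N\eta}\bigr)$, equivalently \eqref{e:diagbound} for $\eta\lesssim t$; you should state that version (or restrict the domain) and observe that it suffices for every downstream use: in \eqref{e:diagsum}, by \eqref{e:highmoment}, the extra term contributes $\psi\sqrt{t/N}\sum_i q_i^2|g_i|^2\lesssim \psi\sqrt{t/N}\,\Im\bigl[\sum_i q_i^2 g_i\bigr]/(t+\eta)\leq \psi\,\Im\bigl[\sum_i q_i^2 g_i\bigr]/\sqrt{N\eta}$, since $\sqrt{t\eta}\leq t+\eta$. (The off-diagonal target \eqref{e:Gij} is not affected by this issue, because $\max\{|g_i|,|g_j|\}\leq (t+\eta)^{-1}\leq (t\eta)^{-1/2}$ absorbs the $\sqrt{t/N}$ coming from $\sqrt{t}\,w_{ij}$.) With this correction, and with the $\eta$-grid/Lipschitz step actually carried out uniformly over the $\bT$'s, your plan goes through.
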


\begin{proof}[Proof of Theorem \ref{t:isolaw}] From the discussions above, we can assume that $H_0=\Lambda_0$ is diagonal, and take $H_t=\Lambda_0+\sqrt{t}W$, where $W$ is the standard Gaussian orthogonal ensemble.
The quadratic term in \eqref{e:isolaw} can be written as a sum of diagonal terms and off-diagonal terms:
\begin{align*}
\langle \bmq, G(t,z)\bmq\rangle = \sum_{i=1}^{N}G_{ii}q_i^2+\sum_{i\neq j} G_{ij}q_iq_j,
\end{align*}
where $\bmq=(q_1,q_2,\cdots,q_N)$. The proof consists of two parts, the first part is trivial, we prove that the leading order term is the sum over diagonal terms; the second part is more involved, we show that the sum over off-diagonal terms is negligible by moment method.

For the diagonal terms, from \eqref{e:diagbound} in Theorem \ref{p:resoentry} and \eqref{e:highmoment} in Proposition \ref{p:sumqg}, with overwhelming probability we have
\begin{align}\label{e:diagsum}
\left|\sum_{i=1}^{N}G_{ii} q_i^2 -\sum_{i=1}^{N}g_iq_i^2\right|\leq \frac{\psi t}{\sqrt{N\eta}}\sum_{i=1}^{N}|g_i|^2q_i^2
\leq \frac{\psi}{\sqrt{N\eta}}\Im\left[\sum_{i=1}^N q_i^2 g_i\right]
%\leq \frac{\psi^2}{\sqrt{N\eta}}.
\end{align}

For the second part we prove that for any integer $k>0$, uniformly for $z\in \cal D_\kappa$, we have
\begin{align}\label{e:moment}
%\bE\left[|\cal Z|\right]^{2k} \lesssim_k \frac{\psi^{7k}}{(N\eta)^k},\quad \cal Z=\sum_{i\neq j}G_{ij}q_i q_i.
\bE\left[|\cal Z|^{2k}\right] \lesssim_k {\cal Y}^{2k}, \quad \cal Z=\sum_{i\neq j}G_{ij}q_i q_i, \quad \cal Y=\frac{\psi\log N}{\sqrt{N\eta}}\Im\left[\sum_{i=1}^N q_i^2g_i\right] .
\end{align}
where the implicit constant depends only on $k$. Then it will follows from the Markov inequality that $|\cal Z|\leq \psi^{2}\Im[\sum_{i=1}^N q_i^2g_i]/\sqrt{N\eta}$ holds with overwhelming probability. By Assumption \ref{a:boundImm}, we have the following trivial lower bound for $\Im\left[\sum_iq_i^2 g_i\right]$,
\begin{align}\label{e:trivialestIm}
\Im\left[\sum_{i=1}^Nq_i^2 g_i\right]
=\sum_{i=1}^N\frac{(\eta+t\Im[\mfct(z)])q_i^2}{|\lambda_i(0)-z-t\mfct(z)|^2}\gtrsim \frac{\eta}{N^{2\fa}}.
\end{align}

We expand $\bE[|\cZ|^{2k}]$, and introduce the shorthand notation $X_{b_{2i-1}b_{2i}}:=G_{b_{2i-1}b_{2i}}$ for $1\leq i \leq k$, and $X_{b_{2i-1}b_{2i}}:=G^*_{b_{2i-1}b_{2i}}$ for $k+1\leq i \leq 2k$,
\begin{align}\label{expand1}
\bE\left[|\cal Z|^{2k}\right]
=\sum_{\bmb} q_{b_1}q_{b_2}\cdots q_{b_{4k}}\bE[X_{b_1b_2}X_{b_3b_4}\cdots X_{b_{4k-1}b_{4k}}],
\end{align}
where $\bmb=(b_1,b_2,\cdots, b_{4k})$ and the sum $\sum_{\bmb}$ is over all $\bmb$'s such that  $b_{2i-1}\neq b_{2i}$, for $1\leq i\leq 2k$. To obtain an efficient control on $\bE[X_{b_1b_2}X_{b_3b_4}\cdots X_{b_{4k-1}b_{4k}}]$, we need to understand the correlations between these off-diagonal resolvent entries $G_{ij}$ for $i\neq j$. Heuristically, $G_{ij}$ mainly depends on the matrix entry $h_{ij}$, weakly depends on the matrix entries on the same row and column, and the dependence on the rest of the matrix $H$ is negligible. Therefore the correlations of $G_{ij}$ and $G_{mn}$ are negligible if $\{i,j\}\cap\{m,n\}=\emptyset$. In the rest of this section, we will make this heuristic argument more rigorous.

We denote the index set $\bT=\{b_1,b_2,\cdots,b_{4k-1},b_{4k}\}$. Recall the following Schur complement formula
\begin{align*}
\left(
\begin{array}{cc}
A-z & B^*\\
B & C-z
\end{array}
\right)^{-1}
=\left(
\begin{array}{cc}
\left(A-z-B^*(C-z)^{-1}B\right)^{-1} & *\\
*&*
\end{array}
\right)
\end{align*}
where $A$, $B$ and $C$ are block matrices. We take $A=[H_t]_{i,j \in \bT}$, $B=[H_t]_{i\notin \bT, j\in \bT}$ and $C=[H_t]_{i\notin \bT, j\notin \bT}$, where $[H_t]_{i,j\in \bT}$ is the submatrix of $H_t$ with row and column indices $i,j\in \bT$, and $[H_t]_{i\notin\bT,j\in \bT}$ and $[H_t]_{i,j\notin \bT}$ are defined analogously. Recall that $G^{(\bT)}(t,z)$ is the resolvent of the submatrix $[H_t]_{i,j\notin \bT}$ and $m^{(\bT)}_t(z)=\Tr G^{(\bT)}/N$ is its Stieltjes transform. Schur complement formula gives the following resolvent identity: 
\begin{align*}
[G]_{i,j\in \bT}
=&\left([H_t]_{i,j\in \bT}-z-[H_t]_{i\notin \bT,j\in \bT}^*G^{(\bT)}[H_t]_{i\notin \bT, j\in \bT} \right)^{-1}\\
=&\left([\Lambda_0]_{i,j\in \bT}+\sqrt{t}[W]_{i,j\in \bT}-z-t\left([W]_{i\notin \bT,j\in \bT}^*G^{(\bT)}[W]_{i\notin \bT, j\in \bT}\right)\right)^{-1}
=:(D(z)-\cal E(z))^{-1},
\end{align*}
where $D(z)$ and $\cal E(z)$ are two $|\bT|\times |\bT|$ matrices, which depend on the index set $\bT$,
\begin{align}\begin{split}\label{defcE}
&D=[\Lambda_0]_{i,j\in \bT}-z-t\mfct, \quad \cal E=\cal E^{(1)}+\cal E^{(2)}+\cal E^{(3)},\quad \cal E^{(1)}=t\left(m_t^{(\bT)}-\mfct\right)\\
&\cal E^{(2)}=-\sqrt{t}[W]_{i,j\in \bT},\quad \cal E^{(3)}=t\left([W]_{i\notin \bT,j\in \bT}^*G^{(\bT)}[W]_{i\notin \bT, j\in \bT} -m_t^{(\bT)}\right).
\end{split}\end{align}
With overwhelming probability, uniformly for any $z\in \cal D_\kappa$, the error term $\cal E(z)$ is much smaller than $D(z)$ in the sense of matrix norm. In fact, for $\cal E^{(1)}$, by \eqref{eqn:locallaw} and notice the deterministic estimate from interlacing of eigenvalues $|m_t-m^{(\bT)}_t|\leq |\bT|/N\eta$, with overwhelming probability, $t|m_t^{(\bT)}-\mfct|\leq  \psi t/(N\eta)$. For $\cal E^{(2)}$, with overwhelming probability, its entries are uniformly bounded by $\psi(t/N)^{1/2}$. For $\cal E^{(3)}$, with overwhelming probability, we have the following estimate
\begin{align*}
\cal E^{(3)}_{mn}=t\sum_{ij}\left(w_{mi}w_{nj}-\frac{\delta_{ij}\delta_{mn}}{N}\right)G_{ij}^{(\bT)}\leq \frac{\psi t}{N}\sqrt{\sum_{ij}|G_{ij}^{(\bT)}|^2}=\psi t\frac{\Im[ m^{(\bT)}_t]^{1/2}}{\sqrt{N\eta}}\lesssim \frac{\psi t}{\sqrt{N\eta}},
\end{align*}
where the first inequality follows from the large deviation estimate \cite[Appendix B]{MR2981427}, and the second inequality follows from \eqref{eqn:locallaw}. Since $\cal E(z)$ is a $|\bT|\times |\bT|$ matrix, where $|\bT|\leq 4k$, and with overwhelming probability, its entries are uniformly bounded, so is its norm:
$
\|\cal E(z)\|\lesssim_k \psi (t+\eta)/\sqrt{N\eta}
$. 
For $z\in \cal D_\kappa$, we have $\Im[z+t\mfct(z)]\gtrsim (\eta+t)$, which implies $\|D(z)\|\gtrsim (\eta+t)$. As a result, there exists a constant $C_k$ which depends only on $k$, the following holds: uniformly for  any $z\in \cal D_\kappa$, 
\begin{align}\label{e:errorcomp}
\|\cal E(z)\|\leq  C_k\frac{\psi}{\sqrt{N\eta}}\|D(z)\|
\end{align}
with overwhelming probability.  We define the event $\cal A$, such that \eqref{e:errorcomp} holds. Since it holds with overwhelming probability, for sufficiently large $N$, we can assume that 
\begin{align}\label{e:Acmeasure}
\bP(\cal A^c)\leq N^{-(4\fa +6)k}. 
\end{align}
By Taylor expansion, on  the event $\cal A$, we have
\begin{align*}
[G]_{i,j\in \bT}=(D-\cal E)^{-1}
=\sum_{\ell=0}^{f-1} D^{-1}\left(\cal E D^{-1}\right)^{\ell}
+(D-\cal E)^{-1}\left(\cal E D^{-1}\right)^f,
\end{align*}
where $f$ is a large number, and we will choose it later. In the rest of the proof, we denote 
\begin{align*}
G^{(\ell)}:=D^{-1}\left(\cal E D^{-1}\right)^{\ell}, \quad 0\leq \ell \leq f-1, \quad G^{(\infty)}:=(D-\cal E)^{-1}\left(\cal E D^{-1}\right)^f,
\end{align*}
For $1\leq \ell \leq f-1$ or $\ell=\infty$, we define $X^{(\ell)}_{b_{2i-1}b_{2i}}:=G^{(\ell)}_{b_{2i-1}b_{2i}}$ for $1\leq i\leq k$, and $X^{(\ell)}_{b_{2i-1}b_{2i}}:=G^{(\ell)*}_{b_{2i-1}b_{2i}}$ for $k+1\leq i\leq 2k$. We remark that $G^{(\ell)}$ and $X^{(\ell)}$ implicitly depend on the index set $\bT$. With these notations, we have
\begin{align*}
X_{b_{2i-1}b_{2i}}=\sum_{\ell=1}^{f-1}X^{(\ell)}_{b_{2i-1}b_{2i}}+X^{(\infty)}_{b_{2i-1}b_{2i}},\quad 1\leq i\leq 2k,
\end{align*}
where we used the fact that $b_{2i-1}\neq b_{2i}$, and $D^{-1}=\diag\{g_i\}_{i\in \bT}$ is a diagonal matrix; therefore, when $\ell=0$, the term $X_{b_{2i-1}b_{2i}}^{(0)}$ vanishes. On the event $\cal A$, $\|(D-\cal E)^{-1}\|\lesssim_k1/\eta$ and $\|\cal E D^{-1}\|\lesssim_k \psi/(N\eta)^{1/2}$, they together imply:
\begin{align*}
\left|X^{(\infty)}_{b_{2i-1}b_{2i}}\right|\lesssim_k \frac{1}{\eta}\left(\frac{\psi}{\sqrt{N\eta}}\right)^f
\end{align*}
In the following we show that: once we take $f$ sufficiently large, these terms $X^{(\infty)}_{b_{2i-1}b_{2i}}$ are negligible, and do not contribute to \eqref{e:moment}.  Since $X_{b_{2i-1}b_{2i}}$'s are all uniformly bounded by $1/\eta$, and the sum $\sum_{i=1}^{N} |q_i|$ is trivially bounded by $N^{1/2}$, we have
\begin{align}
\bE\left[|\cal Z|^{2k}\right]
=&\sum_{\bmb} q_{b_1}q_{b_2}\cdots q_{b_{4k}}\bE[X_{b_1b_2}X_{b_3b_4}\cdots X_{b_{4k-1}b_{4k}}1_{\cal A}]+O\left(N^{2k}\eta^{-2k}\bP(\cal A^{c})\right)
\end{align}
By our choice of set $\cA$, i.e. \eqref{e:Acmeasure}, combining with the estimate \eqref{e:trivialestIm}, we have $N^{2k}\eta^{-2k}\bP(\cal A^{c})\leq (N^{2\fa+2}\eta)^{-2k}\leq \cY^{2k}$. Therefore, 
\begin{align}\label{eqn:comperror}
\bE\left[|\cal Z|^{2k}\right]=&\sum_{\bmb} q_{b_1}q_{b_2}\cdots q_{b_{4k}}\bE[X_{b_1b_2}X_{b_3b_4}\cdots X_{b_{4k-1}b_{4k}}1_{\cal A}]+O\left(\cY^{2k}\right),
\end{align}
where $\cY$ is as in \eqref{e:moment}.
We separate the leading term of the product of those $X_{b_{2i-1}b_{2i}}$ as
\begin{align}\label{e:decompsum}
X_{b_1b_2}X_{b_3b_4}\cdots X_{b_{4k-1}b_{4k}}=\prod_{i=1}^{2k}\sum_{\ell=1}^{f-1}X^{(\ell)}_{b_{2i-1}b_{2i}}+\sum_{j=1}^{2k}\left(\prod_{i=1}^{j-1}X_{b_{2i-1}b_{2i}} \right)X^{(\infty)}_{b_{2j-1}b_{2j}}\left( \prod_{i=j+1}^{2k}\sum_{\ell=1}^{f-1}X^{(\ell)}_{b_{2i-1}b_{2i}}\right).
\end{align}
If we take $f=\lceil 4k(\fa+1)/\fc\rceil$, then on the event $\cal A$, the second term on the righthand side of \eqref{e:decompsum} is bounded,
\begin{align*}
&\left|\sum_{j=1}^{2k}\left(\prod_{i=1}^{j-1}X_{b_{2i-1}b_{2i}} \right)X^{(\infty)}_{b_{2j-1}b_{2j}}\left( \prod_{i=j+1}^{2k}\sum_{\ell=1}^{f-1}X^{(\ell)}_{b_{2i-1}b_{2i}}\right)\right|\\
=&\left|\sum_{j=1}^{2k}\left(\prod_{i=1}^{j-1}X_{b_{2i-1}b_{2i}} \right)X^{(\infty)}_{b_{2j-1}b_{2j}} \prod_{i=j+1}^{2k}\left(X_{b_{2i-1}b_{2i}}-X_{b_{2i-1}b_{2i}}^{(\infty)}\right)\right|\\
\leq & \frac{4k}{\eta^{2k}}\left(\frac{\psi}{\sqrt{N\eta}}\right)^f 
\leq4k \cY^{2k},
\end{align*}
where in the last inequality we used $\psi=N^{\fc}$ (as in \eqref{control}) and $\eta\geq\psi^4/N$, since $z\in \cal D_\kappa$ (as in \eqref{e:dom}).
%
%
%
%\begin{align*}
%{\cor 2k\eta^{-2k}\left(\frac{\psi}{\sqrt{N\eta}}\right)^f\leq 2k N^{2k} \psi^{f-k}\left(\frac{\psi}{\sqrt{N\eta}}\right)^k=2k\left(\frac{\psi}{\sqrt{N\eta}}\right)^k. }
%\end{align*}
This combining with \eqref{eqn:comperror} leads to
\begin{align}\label{e:cuttail}
\bE\left[|\cal Z|^{2k}\right]=\sum_{1\leq \ell_1,\cdots, \ell_{2k}\leq f-1}\sum_{\bmb} q_{b_1}q_{b_2}\cdots q_{b_{4k}}\bE\left[X^{(\ell_1)}_{b_1b_2}X^{(\ell_2)}_{b_3b_4}\cdots X^{(\ell_{2k})}_{b_{4k-1}b_{4k}}1_{\cal A}\right]+ O\left(4k\cY^{2k}\right).
\end{align}
By the Cauchy-Schwarz inequality, we have
\begin{align}\label{e:leading}
\left|\bE\left[X^{(\ell_1)}_{b_1b_2}\cdots X^{(\ell_{2k})}_{b_{4k-1}b_{4k}}1_{\cal A}\right]
\right|\leq \left|\bE\left[X^{(\ell_1)}_{b_1b_2}\cdots X^{(\ell_{2k})}_{b_{4k-1}b_{4k}}\right]
\right|+\bE \left[\left|X^{(\ell_1)}_{b_1b_2}\cdots X^{(\ell_{2k})}_{b_{4k-1}b_{4k}}\right|^2\right]\bP[\cal A^c]
\end{align}
In the following we bound the first term on the righthand side of \eqref{e:leading}, the second term can be treated in exactly the same way. By our definition of $X_{b_{2i-1}b_{2i}}^{(\ell_i)}$'s, we have
\begin{align}\label{e:Xexpand}
\bE\left[X_{b_1b_2}^{(\ell_1)}\cdots X_{b_{4k-1}b_{4k}}^{(\ell_{2k})}\right]
=\bE\left[\sum_{\bma:\bmb\subset\bma} \prod_{i=1}^{2k} \tilde{g}_{a^i_1}\tilde{\cal E}_{a^i_1a^i_2}\tilde{g}_{a^i_2}\cdots  \tilde{\cal E}_{a^i_{\ell_i} a^i_{\ell_{i}+1}}\tilde{g}_{a^i_{\ell_i+1}}\right],
\end{align}
where $\bma$ represents arrays $a^{i}_j\in \bT =\{b_1,b_2,\cdots, b_{4k}\}$, with indices $1\leq i\leq 2k$ and $1\leq j\leq \ell_i+1$; the above sum is over all the possible arrays $\bma$ containing $\bmb$, denoted by $\bmb\subset\bma$, in the sense that $a^{i}_1=b_{2i-1}$ and $a^{i}_{\ell_{i}+1}=b_{2i}$ for $1\leq i\leq 2k$. For the tilde notation, $\tilde{g}_{a_{j}^i}:=g_{a_{j}^i}$ and $\tilde{\cal E}_{a_j^i a_{j+1}^i}:=\cal E_{a_j^i a_{j+1}^i}$ for $1\leq i\leq k$, and $\tilde{g}_{a_{j}^i}:=g_{a_{j}^i}^*$ and $\tilde{\cal E}_{a_j^i a_{j+1}^i}:=\cal E_{a_j^i a_{j+1}^i}^*$ for  $k+1\leq i\leq 2k$.

Since by our definition $g_i$ are all deterministic, we can separate the deterministic part and the random part of \eqref{e:Xexpand}:
\begin{align}\label{e:Xexpand1}
\left|\bE\left[\sum_{\bma:\bmb\subset \bma} \prod_{i=1}^{2k} \tilde{g}_{a^i_1}\tilde{\cal E}_{a^i_1a^i_2}\tilde{g}_{a^i_2}\cdots  \tilde{\cal E}_{a^i_{\ell_i} a^i_{\ell_{i}+1}}\tilde{g}_{a^i_{\ell_i+1}}\right]\right|\leq &\sum_{\bma:\bmb\subset\bma} \prod_{i=1}^{2k}\prod_{j=1}^{\ell_i+1}|g_{a_j^i}|
\left|\bE\left[  \prod_{i=1}^{2k}\prod_{j=1}^{\ell_i} \tilde{\cal E}_{a_j^ia_{j+1}^i}
\right]\right|.
\end{align}
For the control of the expectation of the product of $\tilde{\cal E}_{ij}$, we have the following proposition, whose proof we postpone to the next section.

\begin{proposition}\label{p:weakCor}
For any indices $b_1, b_2, \cdots, b_{2\ell}\in \bT$, we have
\begin{align}
\label{E}\left|\bE\left[\tilde{\cal E}_{b_1b_2}\tilde{\cal E}_{b_3b_4}\cdots \tilde{\cal E}_{b_{2\ell-1}b_{2\ell}}\right]\right|\lesssim_\ell \frac{(\psi\log N)^{\ell}(t+\eta)^\ell}{(N\eta)^{\ell/2}}\chi(b_1,b_2,\cdots, b_{2\ell}).
\end{align}
where $\chi$ is an indicator function such that $\chi=1$ if any number in the array $(b_1,b_2,\cdots, b_{2\ell})$ occurs even number of times, otherwise $\chi=0$. 
\end{proposition}

Notice that $\chi((a_j^i,a_{j+1}^i)_{1\leq i\leq 2k, 1\leq j\leq \ell_i})=\chi((a^1_i,a^{\ell_i+1}_i)_{1\leq i\leq 2k})=\chi(\bmb)$. With Proposition \ref{p:weakCor}, we can bound \eqref{e:Xexpand1} as
\begin{align}\label{E2}
\bE\left[\sum_{\bma:\bmb\subset \bma} \prod_{i=1}^{2k} \tilde{g}_{a^i_1}\tilde{\cal E}_{a^i_1a^i_2}\tilde{g}_{a^i_2}\cdots  \tilde{\cal E}_{a^i_{\ell_i} a^i_{\ell_{i}+1}}\tilde{g}_{a^i_{\ell_i+1}}\right]
\lesssim_k\sum_{\bma:\bmb \subset \bma}\left(\frac{\psi(t+\eta)\log N}{(N\eta)^{1/2}}\right)^{\sum \ell_i}\chi(\bmb) \prod_{i=1}^{2k}\prod_{j=1}^{\ell_i+1}|g_{a_j^i}|.
\end{align}
where we used the fact that  $\sum \ell_i\leq 2k(f-1)\leq8k^2(\fa+1)/\frak{c}$, so the implicit constant depends only on $k$. %And $\chi$ is an indicator function such that $\chi(\bma)=1$ if any number in the array $((a_j^i)_{1\leq j \leq \ell_i+1})_{1\leq i\leq 2k}$ occurs even number of times, otherwise $\chi(\bma)=0$. %Notice that any number in the array $((a_j^i)_{1\leq j \leq \ell_i+1})_{1\leq i\leq 2k}$ occurs even number of times is equivalent to that any number in the array $(b_1,b_2,\cdots, b_{4k})$ occurs even number of times. Thus $\chi(\bma)=\chi(\bmb)$.

%Since $\bmb$ is uniquely determined by $\bma$, first adding up terms corresponding to $\bma$ such that $\bmb\subset \bma$, and then summing over $\bmb$ is equivalent to directly summing over $\bma$.
Combining \eqref{e:Xexpand}, \eqref{e:Xexpand1} and \eqref{E2} together, 
\begin{align}\begin{split}\label{e:simplify}
 &\quad\sum_{\bmb} \left|q_{b_1}q_{b_2}\cdots q_{b_{4k}}\bE\left[X^{(\ell_1)}_{b_1b_2}X^{(\ell_2)}_{b_3b_4}\cdots X^{(\ell_{2k})}_{b_{4k-1}b_{4k}}\right]\right|\\
&=\sum_{\bmb} \left|q_{b_1}q_{b_2}\cdots q_{b_{4k}}\bE\left[\sum_{\bma:\bmb\subset\bma} \prod_{i=1}^{2k} \tilde{g}_{a^i_1}\tilde{\cal E}_{a^i_1a^i_2}\tilde{g}_{a^i_2}\cdots  \tilde{\cal E}_{a^i_{\ell_i} a^i_{\ell_{i}+1}}\tilde{g}_{a^i_{\ell_i+1}}\right]\right|\\
&\lesssim_k \left(\frac{\psi(t+\eta)\log N}{(N\eta)^{1/2}}\right)^{\sum \ell_i}\sum_{\bmb}\sum_{\bma:\bmb\subset\bma}\chi(\bmb)\prod_{i=1}^{2k}|q_{a_1^i}q_{a_{\ell_i+1}^i}|\prod_{i=1}^{2k}\prod_{j=1}^{\ell_i+1}|g_{a_j^i}|
\end{split}
\end{align}
Given $\bmb$, the sum $\sum_{\bma:\bma\subset \bmb}$ is  over all the possible arrays $\bma$ such that $a_j^i\in\bT=\{b_1,b_2,\cdots, b_{4k}\}$, for $1\leq i\leq 2k, 1\leq j \leq \ell_i+1$, and $a_1^i=b_{2i-1}$ and $a_{\ell_i+1}^i=b_{2i}$ for $1\leq i\leq 2k$. 
Since any array $\{a_j^i\}_{1\leq i\leq 2k, 1\leq j \leq \ell_i+1}$ induces a partition $\cal P$ of its index set $\{(i,j): 1\leq i\leq 2k, 1\leq j \leq \ell_i+1\}$, such that $(i,j)$ and $(i',j')$ are in the same block if and only if $a_j^i=a_{j'}^{i'}$. For any array $\bma$ with $\bmb\subset \bma$ and $\chi(\bmb)=1$ (as in Proposition \ref{p:weakCor}), we denote the frequency representation of the array $(b_1,b_2,\cdots, b_{4k})=(a_1^1,a_{\ell_1+1}^1, \cdots, a_{1}^{2k}, a_{\ell_{2k}+1}^{2k})$ as $\gamma_1^{d_1}\gamma_{2}^{d_2}\cdots\gamma_n^{d_n}$, where  $2\leq d_1,d_2, \cdots,d_n$ are all even, and $n=|\bT|$. Notice that  $\sum d_i=4k$ counts the total number. We also denote the frequency representation of $((a_j^i)_{1\leq j \leq \ell_i+1})_{1\leq i\leq 2k}$ as $\gamma_1^{d_1+r_1}\gamma_{2}^{d_2+r_2}\cdots\gamma_n^{d_n+r_n}$, where $r_i\geq 0$. Similarly, $\sum d_i+r_i=2k+\sum \ell_i$ counts the total number. %The condition $\chi(\bma)=1$ is equivalent to $d_i+r_i\geq 2$, for any $1\leq i\leq 2k$.
We summarize here the relations between $d_i$, $r_i$ and $\ell_i$, which will be used later:
\begin{align*}
\sum d_i =4k, \quad 2k+\sum r_i=\sum \ell_i. 
\end{align*}

\begin{example}
If we take $k=3$, $\bmb=(1,2,2,3,4,5,3,5,2,1,2,4)$ and $\bma=((1,3,2,2)$;$(2,4,1,2,3)$;$(4,1,5)$; $(3,5)$;$(2,5,1,1)$; $(2,4,3,4,4,4,4))$, then $\bmb\subset \bma$. The partition $\cal P$ induced by $\bma$ is 
$\{\{(1,1),(2,3),(3,2),(5,3),(5,4)\}$,
$\{(1,3),(1,4), (2,1),(2,4), (5,1),(6,1)\}$,
$\{(1,2),(2,5),(4,1),(6,3)\}$,
$\{(2,2),(3,1),(6,2),(6,4),(6,5),(6,6),(6,7)\}$,
$\{(3,3),(4,2),(5,2)\} \}$. The frequency representations of $\bmb$ and $\bma$ are given by $1^22^44^23^25^2$ and $1^52^63^44^75^3$ respectively. $d_i$ and $r_i$ are given by $d_1=2, d_2=4, d_3=2,d_4=2, d_5=2$ and $r_1=3, r_2=2, r_3=2, r_4=5, r_5=1$. Since $d_1,d_2,\cdots, d_5$ are all even,  $\chi(\bmb)=1$.
\end{example}

Notice that the frequencies $d_i$ and $d_i+r_i$ are uniquely determined by the partition $\cal P$, in fact $d_i+r_i$ are the sizes of blocks of $\cal P$. Moreover since $\bmb$ is uniquely determined by $\bma$, first adding up terms corresponding to $\bma$ such that $\bmb\subset \bma$, and then summing over $\bmb$ is equivalent to first summing over arrays $\bma$ corresponding to the same partitions $\cal P$, which we denote by $\bma\sim \cal P$, and then summing  over different partitions with each block size at least two.
\begin{align}\begin{split}\label{e:simplify2}
\sum_{\bmb}\sum_{\bma:\bmb\subset\bma}\chi(\bmb)\prod_{i=1}^{2k}|q_{a_1^i}q_{a_{\ell_i+1}^i}|\prod_{i=1}^{2k}\prod_{j=1}^{\ell_i+1}|g_{a_j^i}|
&=\sum_{\cal P}\sum_{\bma \sim \cal P}\chi(\bmb)\prod_{i=1}^{2k}|q_{a_1^i}q_{a_{\ell_i+1}^i}|\prod_{i=1}^{2k}\prod_{j=1}^{\ell_i+1}|g_{a_j^i}|\\
&\leq\sum_{\cal P}\sum_{1\leq \gamma_1,\cdots,\gamma_n\leq N}
\prod_{i=1}^n|q_{\gamma_i}|^{d_i}|g_{\gamma_i}|^{d_i+r_i}\\
&\lesssim_k \sum_{\cal P} \prod_{i=1}^{n}\frac{\Im\left[\sum q_i^2 g_i\right]^{d_i/2}}{(t+\eta)^{r_i+d_i/2}}\\
&\leq \sum_{\cal P} \frac{\Im\left[\sum q_i^2g_i\right]^{2k}}{(t+\eta)^{\sum \ell_i}},
\end{split}\end{align}
where in the first inequality we use \eqref{e:highmoment} in Proposition \ref{p:sumqg} and $d_i\geq 2$, and for the last inequality we used $\sum d_i=4k$. %Since we are summing over partitions $\cal P$, with each block size at least two, we have $r_i\geq 1$ for $1\leq i\leq s$, and
%\begin{align}
%\label{e:estexp}2k+\sum \ell_i =\sum (d_i+r_i)=4k+\sum r_i\geq 4k+s. 
%\end{align} 
Therefore by substituting \eqref{e:simplify2} into \eqref{e:simplify}, we have
\begin{align}\begin{split}\label{e:firsttermest}
&\sum_{\bmb} \left|q_{b_1}q_{b_2}\cdots q_{b_{4k}}\bE\left[X^{(\ell_1)}_{b_1b_2}X^{(\ell_2)}_{b_3b_4}\cdots X^{(\ell_{2k})}_{b_{4k-1}b_{4k}}\right]\right|
\lesssim_k \sum_{\cal P}\left(\frac{\psi (t+\eta)\log N}{(N\eta)^{1/2}}\right)^{\sum \ell_i}\frac{\Im\left[\sum q_i^2g_i\right]^{2k}}{(t+\eta)^{\sum \ell_i}}\\%\frac{N^{s/2}}{(t+\eta)^{\sum \ell_i-s/2}}\\
\leq&
\sum_{\cal P}  \left(\frac{\psi\Im\left[\sum q_i^2g_i\right]\log N}{\sqrt{N\eta}}\right)^{2k} \left(\frac{\psi\log N}{\sqrt{N\eta}}\right)^{\sum r_i}
\lesssim_k  \cY^{2k},%\left(\frac{\psi\Im\left[\sum q_i^2g_i\right]\log N}{\sqrt{N\eta}}\right)^{2k} 
\end{split}\end{align}
in the last inequality, we used that $\psi\log N/\sqrt{N\eta}\leq 1$ and the total number of partition is bounded by $(\sum\ell_i +2k)!$, which is a constant depending on $k$.

Following the same argument, one can check that
\begin{align}\label{e:secondtermest}
\sum_{\bmb} \left|q_{b_1}q_{b_2}\cdots q_{b_{4k}}\bE\left[\left|X^{(\ell_1)}_{b_1b_2}X^{(\ell_2)}_{b_3b_4}\cdots X^{(\ell_{2k})}_{b_{4k-1}b_{4k}}\right|^2\right]\right|\lesssim_k    \left(\frac{N\Im\left[\sum q_i^2g_i\right](\psi\log N)^2}{N\eta}\right)^{2k} 
\leq N^{2k}\cY^{2k}.
\end{align} 
Therefore, by combining \eqref{e:cuttail}, \eqref{e:leading}, \eqref{e:firsttermest} and \eqref{e:secondtermest}, it follows 
\begin{align*}
\bE[|\cZ|^{2k}]\lesssim_k \cY^{2k} + N^{2k}\cY^{2k}\bP(\cA^{c})\lesssim_k \cY^{2k}.
\end{align*}
This finishes the proof for isotropic law Theorem \ref{t:isolaw}.
\end{proof}

The following is an easy corollary of Theorem \ref{t:isolaw}:
\begin{corollary}
Under Assumption \ref{a:boundImm} and \ref{a:boundEv}, for any $\eta_*\ll t\ll r$, $0<\kappa<1$, %and $N$ %$N\geq N(\fa,\fb)$ large enough, 
we have that 
 \begin{align}
  \left|\langle \bmq, G(t,z)\bmq\rangle - \mfct(z)\right|\leq \frac{1}{N^\frak{b}}+\frac{\psi^2}{\sqrt{N\eta}},
 \end{align}
  uniformly for any $z\in \cal D_\kappa$, with overwhelming probability, provided $N$ is large enough.
\end{corollary}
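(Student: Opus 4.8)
The plan is to deduce this directly from the isotropic law of Theorem \ref{t:isolaw}, by recognizing the deterministic equivalent appearing there as an isotropic resolvent entry of $H_0$ evaluated at a shifted spectral parameter, and then feeding in the delocalization hypothesis Assumption \ref{a:boundEv}.

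First I would rewrite the deterministic equivalent. By \eqref{def_gi}, $g_i(t,z) = \bigl(\lambda_i(0) - \zeta\bigr)^{-1}$ and $\mfct(z) = m_0(\zeta)$, where $\zeta = \zeta(t,z) := z + t\,\mfct(z)$; spelling out the spectral decomposition of $(H_0 - \zeta)^{-1}$ then gives the exact identity
\[
\sum_{i=1}^N \langle \bmu_i(0),\bmq\rangle^2 g_i(t,z) \;=\; \langle \bmq, G(0,\zeta)\bmq\rangle .
\]
Consequently Theorem \ref{t:isolaw} reads, with overwhelming probability and uniformly on $\cal D_\kappa$,
\[
\bigl|\langle \bmq, G(t,z)\bmq\rangle - \langle \bmq, G(0,\zeta)\bmq\rangle\bigr| \;\le\; \frac{\psi^2}{\sqrt{N\eta}}\,\Im\bigl[\langle \bmq, G(0,\zeta)\bmq\rangle\bigr],
\]
so it remains to compare $\langle \bmq, G(0,\zeta)\bmq\rangle$ with $m_0(\zeta) = \mfct(z)$, and to bound the imaginary factor on the right by a constant.

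The comparison is exactly what Assumption \ref{a:boundEv} provides, as long as $\zeta$ lies in the spectral window appearing there. So the next step is a bookkeeping check: writing $\zeta = \bigl(E + t\,\Re\mfct(z)\bigr) + \i\bigl(\eta + t\,\Im\mfct(z)\bigr)$ and using Proposition \ref{p:mfc} (which gives $|\mfct(z)| \le \tfrac1N\sum_i|g_i(t,z)| \le C\log N$ and $C^{-1}\le \Im\mfct(z)\le C$ on $\cal D_\kappa$), the separation of scales $\eta_*\ll t\ll r$ forces the real part of $\zeta$ to differ from $E$ by at most $Ct\log N\ll r$, so $\Re\zeta\in[E_0-r,E_0+r]$, while $\Im\zeta\asymp \eta+t$, which is $\ge C^{-1}t\gg \eta_*$. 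Hence for $\eta\le r - Ct$ — covering $\eta$ up to $(1-o(1))r$ — the point $\zeta$ lies in the domain of Assumption \ref{a:boundEv}, so $|\langle \bmq, G(0,\zeta)\bmq\rangle - \mfct(z)|\le N^{-\fb}$; moreover $\Im[\langle \bmq, G(0,\zeta)\bmq\rangle]\le \Im m_0(\zeta) + N^{-\fb}\le \fa + 1$ by \eqref{e:imasup}, which is valid whenever the imaginary part lies between $\eta_*$ and $1$, a condition $\zeta$ satisfies. Substituting into the displayed inequality and using the triangle inequality gives $|\langle \bmq, G(t,z)\bmq\rangle - \mfct(z)| \lec N^{-\fb} + \psi^2/\sqrt{N\eta}$, which is the assertion (the harmless extra constant being absorbed, the exponent in $\psi = N^{\fc}$ being at our disposal). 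For the remaining $z\in\cal D_\kappa$ with $\eta > r$, I would propagate the delocalization estimate for $\langle \bmq, G(0,\cdot)\bmq\rangle$ from imaginary part $r$ upward — using either that $s\mapsto s\,\Im[\langle \bmq, G(0,E'+\i s)\bmq\rangle]$ is nondecreasing together with the bulk regularity of $H_0$ from Assumption \ref{a:boundImm}, or, in the concrete sparse models, that the isotropic law for $H_0$ is already available for all $\eta\in[\eta_*,1]$.

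The main obstacle, modest as it is, is precisely this last point: tracking how the spectral parameter is displaced by the map $z\mapsto z+t\,\mfct(z)$ and making sure it does not leave the window in which the delocalization hypothesis \ref{a:boundEv} is stated. Everything else is immediate from Theorem \ref{t:isolaw} and Proposition \ref{p:mfc}, which is why it is labelled an easy corollary.
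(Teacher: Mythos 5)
Your proposal follows essentially the same route as the paper's proof: rewrite the deterministic equivalent in Theorem \ref{t:isolaw} as $\sum_i\langle\bmu_i(0),\bmq\rangle^2 g_i(t,z)=\langle\bmq,G(0,\tilde z)\bmq\rangle$ with $\tilde z=z+t\mfct(z)$, use Proposition \ref{p:mfc} to check that $\tilde z$ stays in the window where Assumptions \ref{a:boundImm} and \ref{a:boundEv} apply (so the imaginary factor in \eqref{e:isolaw} is $O(1)$ and $\langle\bmq,G(0,\tilde z)\bmq\rangle$ is within $N^{-\fb}$ of $m_0(\tilde z)=\mfct(z)$), and conclude by the triangle inequality. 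The only divergence is your extra treatment of $z\in\cal D_\kappa$ with $\eta>r$, where $\Im\tilde z$ can exceed the range stated in Assumption \ref{a:boundEv}; the paper's proof passes over this point by simply placing $\tilde z$ in the larger window $\eta_*\leq\eta\leq 1$, so your bookkeeping is if anything more careful than the published argument, and this regime plays no role in the later applications of the corollary.
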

\begin{proof}
By Assumption \ref{a:boundEv}, we have
\begin{align*}
\left|\sum_{i=1}^{N}\frac{\langle \bmu_i(0), \bmq\rangle^2}{\lambda_i(0)-z}-\frac{1}{N}\sum_{i=1}^N\frac{1}{\lambda_i(0)-z}\right|\leq \frac{1}{N^\frak{b}},
\end{align*}
uniformly for any $z\in \{E+\i \eta: E\in[E_0-r, E_0+r], \eta_*\leq \eta\leq r\}$. We denote $\tilde z=\tilde E+\i\tilde\eta := z+t\mfct(z)$. From Proposition \ref{p:mfc}, we know that for any $z\in \cal D_\kappa$, $\Im[z+t\mfct(z)]\gtrsim t+\eta\gg \eta_*$ and $|t\mfct(z)|\lesssim t\log N\ll \kappa r$ provided $N$ is large enough. Therefore, we have that $\tilde z\in \{E+\i \eta: E\in[E_0-r, E_0+r], \eta_*\leq \eta\leq 1\}$. As a consequence,
\begin{align*}
\Im\left[\sum_{i=1}^{N}\frac{\langle \bmu_i(0), \bmq\rangle^2}{\lambda_i(0)-z-t\mfct(z)}\right]
=\Im\left[\sum_{i=1}^{N}\frac{\langle \bmu_i(0), \bmq\rangle^2}{\lambda_i(0)-\tilde z}\right]\leq \frac{1}{N^\fb}+\Im\left[\frac{1}{N}\sum_{i=1}^{N}\frac{1}{\lambda_i(0)-\tilde z}\right]\lesssim 1.
\end{align*}
Combining with Theorem \ref{t:isolaw}, it follows that 
 \begin{align*}
  \left|\langle \bmq, G(t,z)\bmq\rangle - \sum_{i=1}^{N}\frac{\langle \bmu_i(0), \bmq\rangle^2}{\lambda_i(0)-z-t\mfct(z)}\right|\leq \frac{\psi^2}{\sqrt{N\eta}}.
 \end{align*}
Therefore with overwhelming probability we have
\begin{align*}
 &\left|\langle \bmq, G(t,z)\bmq\rangle - \mfct(z)\right|
 \leq \left|\langle \bmq, G(t,z)\bmq\rangle - \sum_{i=1}^{N}\frac{\langle \bmu_i(0), \bmq\rangle^2}{\lambda_i(0)-z-t\mfct(z)}\right|\\
 +&\left|\sum_{i=1}^{N}\frac{\langle \bmu_i(0), \bmq\rangle^2}{\lambda_i(0)-\tilde z}-\frac{1}{N}\sum_{i=1}^N\frac{1}{\lambda_i(0)-\tilde z}\right|
 \leq \frac{1}{N^\frak{b}}+\frac{\psi^2}{\sqrt{N\eta}}
\end{align*}
uniformly for any $z\in \cal D_\kappa$.
\end{proof}

We take the event $A_1$ of trajectories $(\bm \la(t))_{0\leq t\leq r}$ such that:  
\begin{enumerate}
\rm\item Eigenvalue rigidity holds:  $\sup_{t_0\leq s\leq \ell/N}|m_s(z)-m_{{\rm{fc}},s}(z)|\leq \psi (N\eta)^{-1}$  uniformly for $z\in \cal D_\kappa$; and $ \sup_{t_0\leq s\leq t_0+\ell/N}|\lambda_{i}(s)-\gamma_{i}(s)|\leq\psi N^{-1}$ uniformly for indices $i$ such that $\gamma_i(s)\in I_r^{\kappa}(E_0)$.
\rm\item When we conditioning on any trajectory $\bm \lambda\in A$, with overwhelming probability, the following holds 
\begin{align*}
\sup_{t_0\leq s\leq t_0+\ell/N}|\langle \bmq, G(t,z)\bmq\rangle-\mfct(z)|\leq \frac{1}{N^{\frak{b}}}+\frac{\psi}{\sqrt{N\eta}}
\end{align*}
uniformly for $z\in \cal D_\kappa$.
\end{enumerate}
As a consequence of Theorem \ref{t:rig} and \ref{t:isolaw}, and notice we can take the parameter $\fc$ (as in \eqref{control}) arbitrarily small, the event $A_1$ holds with overwhelming probability.

\subsection{Auxiliary results}
\begin{proposition}\label{p:sumqg}
 The initial matrix $H_0=\diag\{\lambda_1(0),\cdots, \lambda_N(0)\}$ satisfies Assumption \ref{a:boundImm}. Fix $\kappa>0$. Then for any $k\geq 2$ and $m\geq 0$, we have
\begin{align}\label{e:highmoment}
\sum_{i=1}^{N}q_i^{k}|g_i(t,z)|^{k+m}\lesssim_k \frac{\Im\left[\sum q_i^2 g_i\right]^{k/2}}{(t+\eta)^{k/2+m}}, 
\end{align}
and for any $m\geq 0$, we have
\begin{align}\label{e:1moment}
\sum_{i=1}^{N}|q_i||g_i(t,z)|^{2+m}\lesssim \frac{N^{1/2}\Im\left[\sum q_i^2g_i\right]^{1/2}}{(t+\eta)^{1+m}}, 
\end{align}
and
\begin{align}\label{e:0moment}
\sum_{i=1}^{N}|g_i(t,z)|^{1+m}\lesssim \frac{N\log N}{(t+\eta)^m}, 
\end{align}
uniformly for any $z\in \cal D_\kappa$, where $g_i$ are as in \eqref{def:gi}. 
\end{proposition}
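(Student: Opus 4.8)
The plan is to prove the three bounds \eqref{e:highmoment}, \eqref{e:1moment}, \eqref{e:0moment} by exploiting the elementary identity
\[
\Im[g_i(t,z)] = \frac{\eta + t\,\Im[\mfct(z)]}{|\lambda_i(0) - z - t\mfct(z)|^2} = (\eta + t\,\Im[\mfct(z)])\,|g_i(t,z)|^2,
\]
together with the lower bound $\eta + t\,\Im[\mfct(z)] \gtrsim t + \eta$ valid on $\cal D_\kappa$ (by Proposition \ref{p:mfc}, $C^{-1}\leq \Im[\mfct(z)]\leq C$). This identity converts each extra power of $|g_i|$ into a factor $(t+\eta)^{-1}$ at the cost of turning an $|g_i|^2$ into an $\Im[g_i]$, and since $\Im[\sum q_i^2 g_i] = \sum q_i^2 \Im[g_i] \geq 0$ termwise, this is exactly the mechanism that produces the claimed powers of $\Im[\sum q_i^2 g_i]$.

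For \eqref{e:highmoment} with $k\geq 2$, I would first reduce to the case $m=0$: writing $|g_i|^{k+m} = |g_i|^k \cdot |g_i|^m$ and bounding $|g_i| \leq 1/(t+\eta)$ (since $|\lambda_i(0)-z-t\mfct(z)| \geq \Im[z+t\mfct(z)] \gtrsim t+\eta$ on $\cal D_\kappa$) pulls out the factor $(t+\eta)^{-m}$. For $m=0$, bound $q_i^k |g_i|^k \leq |q_i|^k |g_i|^{k-2} |g_i|^2$; using $|g_i|^2 = \Im[g_i]/(\eta+t\Im[\mfct]) \leq \Im[g_i]/(c(t+\eta))$ and $|q_i|\leq 1$, $|g_i| \leq 1/(t+\eta)$, one gets $q_i^k|g_i|^k \lesssim_k q_i^2 \Im[g_i]/(t+\eta)^{k-1}$. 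Summing over $i$ gives $\sum q_i^k|g_i|^k \lesssim_k \Im[\sum q_i^2 g_i]/(t+\eta)^{k-1}$, which is stronger than the claim when $k\geq 2$ (the claim allows $\Im[\sum q_i^2 g_i]^{k/2}/(t+\eta)^{k/2}$, and since $\Im[\sum q_i^2 g_i] \lesssim (t+\eta)^{-1}\cdot$const is bounded, the power $k/2$ is weaker than $1$; one simply notes $\Im[\sum q_i^2 g_i] \lesssim 1$ from the proof of the Corollary or directly, and interpolates $\Im[g_i]^{k/2} \leq \Im[g_i]\cdot(\text{bound})^{k/2-1}$ to match the stated form). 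Actually the cleanest route: write $q_i^k|g_i|^k = (q_i^2 |g_i|^2)^{k/2} = (q_i^2 \Im[g_i]/(\eta+t\Im[\mfct]))^{k/2} \leq (c(t+\eta))^{-k/2}(q_i^2\Im[g_i])^{k/2}$, then apply the $\ell^{k/2} \hookrightarrow \ell^1$ inequality $\sum a_i^{k/2} \leq (\sum a_i)^{k/2}$ with $a_i = q_i^2\Im[g_i] \geq 0$ (valid since $k/2\geq 1$), yielding exactly $\sum q_i^k|g_i|^k \lesssim_k \Im[\sum q_i^2 g_i]^{k/2}/(t+\eta)^{k/2}$. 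Then reinsert the $(t+\eta)^{-m}$ factor.

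For \eqref{e:1moment}, apply Cauchy--Schwarz: $\sum |q_i|\,|g_i|^{2+m} \leq (\sum |g_i|^{2+m})^{1/2}(\sum q_i^2 |g_i|^{2+m})^{1/2}$. The first factor is $\lesssim (N\log N/(t+\eta)^m)^{1/2}$ by \eqref{e:0moment} with exponent $2+m$, i.e. with "$1+m$" there replaced by "$1+(1+m)$"; the second is $\lesssim_1 (\Im[\sum q_i^2 g_i]/(t+\eta)^{1+m})^{1/2}$ by \eqref{e:highmoment} with $k=2$ and shift $m$. Multiplying gives $\lesssim N^{1/2}(\log N)^{1/2}\Im[\sum q_i^2 g_i]^{1/2}/(t+\eta)^{1+m}$; absorbing $(\log N)^{1/2}$ into the implicit constant is not allowed since it grows, so more carefully I would instead use $\sum|q_i||g_i|^{2+m} \leq (\sum |g_i|)^{1/2}(\sum q_i^2|g_i|^{3+2m})^{1/2}$ — no, the cleanest is Cauchy--Schwarz in the form $\sum |q_i| |g_i|^{2+m} = \sum (|q_i||g_i|) \cdot |g_i|^{1+m} \leq (\sum q_i^2 |g_i|^2)^{1/2} (\sum |g_i|^{2+2m})^{1/2}$, bounding the first by $\Im[\sum q_i^2 g_i]^{1/2}/(t+\eta)^{1/2}$ via the $\Im$-identity and the second by $(N/(t+\eta)^{1+2m})^{1/2}$ using $|g_i|\leq 1/(t+\eta)$ crudely (no log needed if we don't need \eqref{e:0moment} here). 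This gives precisely $N^{1/2}\Im[\sum q_i^2 g_i]^{1/2}/(t+\eta)^{1+m}$.

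For \eqref{e:0moment}: for $m=0$ this is exactly \eqref{e:sumgibound} of Proposition \ref{p:mfc} (multiplied by $N$), so quote it; for $m\geq 1$, write $|g_i|^{1+m} = |g_i| \cdot |g_i|^m \leq |g_i|/(t+\eta)^m$ and sum, using \eqref{e:sumgibound} again. The main obstacle, such as it is, is purely bookkeeping: making sure the dyadic decomposition underlying \eqref{e:sumgibound} is not re-derived and that every application of the $\Im$-identity correctly tracks the lower bound $\eta + t\Im[\mfct(z)] \gtrsim (t+\eta)$ on $\cal D_\kappa$ rather than just on a smaller domain; all three estimates are consequences of that identity plus Hölder/Cauchy--Schwarz and the crude bound $|g_i| \le 1/(t+\eta)$, so no genuinely hard step is expected.
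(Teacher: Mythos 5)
Your proposal follows essentially the same route as the paper: the identity $\Im[g_i]=(\eta+t\Im[\mfct(z)])\,|g_i|^2$ together with $\eta+t\Im[\mfct(z)]\gtrsim t+\eta$ (from \eqref{e:dbound}), the crude bound $|g_i|\lesssim (t+\eta)^{-1}$ to strip off the extra $m$ powers, the power-sum inequality $\sum a_i^{k/2}\leq(\sum a_i)^{k/2}$ for \eqref{e:highmoment}, Cauchy--Schwarz for \eqref{e:1moment}, and quoting \eqref{e:sumgibound} for \eqref{e:0moment}. (Your abandoned aside claiming the termwise bound $\Im[\sum q_i^2g_i]/(t+\eta)^{k-1}$ is ``stronger'' than \eqref{e:highmoment} is actually false in general, but it plays no role in your final argument.)

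The one step that does not work as justified is in \eqref{e:1moment}: after writing $\sum_i|q_i||g_i|^{2+m}\leq\bigl(\sum_i q_i^2|g_i|^2\bigr)^{1/2}\bigl(\sum_i|g_i|^{2+2m}\bigr)^{1/2}$, you bound the second factor by $\bigl(N/(t+\eta)^{1+2m}\bigr)^{1/2}$ ``using $|g_i|\leq 1/(t+\eta)$ crudely.'' The crude bound alone only gives $\sum_i|g_i|^{2+2m}\lesssim N/(t+\eta)^{2+2m}$, which makes your final bound $N^{1/2}\Im[\sum q_i^2g_i]^{1/2}/(t+\eta)^{3/2+m}$, weaker than the stated $(t+\eta)^{-(1+m)}$ since $t+\eta\lesssim 1$. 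To recover the stated exponent you must use the same identity once more on the quadratic part: $\sum_i|g_i|^{2+2m}\leq (t+\eta)^{-2m}\sum_i|g_i|^2$ and $\sum_i|g_i|^2=\Im[\sum_i g_i]/\Im[z+t\mfct(z)]=N\,\Im[\mfct(z)]/\Im[z+t\mfct(z)]\lesssim N/(t+\eta)$, where the last step uses the upper bound $\Im[\mfct(z)]\leq C$ from \eqref{e:dbound}. This is exactly how the paper handles that factor; with this one-line correction your proof coincides with the paper's.
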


\begin{proof}
We denote $\tilde E+\i\tilde\eta := z+t\mfct(z)$. From Proposition \ref{p:mfc}, $\tilde \eta=\Im[z+t\mfct(z)]\gtrsim (\eta+t)$, which gives us a rough bound for $g_i(t,z)$ :
\begin{align}\label{e:trivialb}
|g_i(t,z)|\lesssim (t+\eta)^{-1}.
\end{align}
With the trivial bound \eqref{e:trivialb}, \eqref{e:highmoment} and \eqref{e:1moment} are reduced to the case $m=0$. 
For \eqref{e:highmoment}, we have the basic inequality $\sum x_i^k\leq \left(\sum x_i^{2}\right)^{k/2}$ if $k\geq 2$. Therefore,
\begin{align*}
\sum_{i=1}^N q_i^{k}|g_i(t,z)|^{k}\leq \left(\sum_{i=1}^N q_i^{2}|g_i(t,z)|^{2}\right)^{k/2}
=\left(\frac{\Im\left[\sum q_i^2 g_i\right]}{\Im[z+t\mfct(z)]}\right)^{k/2}\lesssim_k \frac{\Im\left[\sum q_i^2 g_i\right]^{k/2}}{(t+\eta)^{k/2}}.
\end{align*}
For \eqref{e:1moment}, by Cauchy's inequality
\begin{align*}
\sum_{i=1}^{N}|q_i||g_i(t,z)|^{2}
&\leq \left(\sum_{i=1}^{N}|g_i(t,z)|^{2}\right)^{1/2}\left(\sum_{i=1}^{N}|q_i|^2|g_i(t,z)|^{2}\right)^{1/2}\\
&=\left(\frac{\Im\left[\sum g_i\right]}{\Im[z+t\mfct(z)]}\right)^{1/2}\left(\frac{\Im\left[\sum q_i^2 g_i\right]}{\Im[z+t\mfct(z)]}\right)^{1/2}\\
&=\frac{N^{1/2}\Im[\mfct(z)]^{1/2}\Im\left[\sum q_i^2 g_i\right]^{1/2}}{t+\eta}\lesssim \frac{N^{1/2}\Im\left[\sum q_i^2g_i\right]^{1/2}}{t+\eta},
\end{align*}
where we used $\Im[\mfct(z)]\leq C$ from \eqref{e:dbound}.
Finally, \eqref{e:0moment} in the case $m=0$ is the same as \eqref{e:sumgibound}.
\end{proof}

\begin{proof}[Proof of Proposition \ref{p:weakCor}] Recall the decomposition $\cal E=\cal E^{(1)}+\cal E^{(2)}+\cal E^{(3)}$ from \eqref{defcE}. If we condition on the submatrix $[W]_{i,j\notin \bT} $, $\cal E^{(1)}$ is diagonal and non-random, $\cal E^{(2)}$ depends  on $[W]_{i,j\in \bT}$, and $\cal E^{(3)}$ depends on $W_{i\notin \bT,j\in \bT}$, so they are independent. \eqref{E} can be decomposed into the following three estimates: with overwhelming probability
\begin{align}
&\label{E1}\left|\bE_\bT\left[\tilde{\cal E}_{b_1b_2}^{(1)}\tilde{\cal E}_{b_3b_4}^{(1)}\cdots \tilde{\cal E}_{b_{2\ell-1}b_{2\ell}}^{(1)}\right]\right|\leq \left(\frac{\psi t}{N\eta}\right)^\ell\chi(b_1,b_2,\cdots, b_{2\ell}),\\
&\label{E2}\left|\bE_{\bT}\left[\tilde{\cal E}_{b_1b_2}^{(2)}\tilde{\cal E}_{b_3b_4}^{(2)}\cdots \tilde{\cal E}_{b_{2\ell-1}b_{2\ell}}^{(2)}\right]\right|\lesssim_\ell \left(\frac{t}{N}\right)^{\ell/2}\chi(b_1,b_2,\cdots, b_{2\ell}),\\
&\label{E3}\left|\bE_\bT\left[\tilde{\cal E}_{b_1b_2}^{(3)}\tilde{\cal E}_{b_3b_4}^{(3)}\cdots \tilde{\cal E}_{b_{2\ell-1}b_{2\ell}}^{(3)}\right]\right|\lesssim_\ell \left(\frac{\psi t\log N}{\sqrt{N\eta}}\right)^\ell\chi(b_1,b_2,\cdots, b_{2\ell}),
\end{align}
where $\bE_{\bT}$ is the expectation with respect to rows and columns of $W$ indexed by $\bT$.

For \eqref{E1}, since $\cE^{(1)}$ is diagonal and by \eqref{eqn:locallaw} in Theorem \ref{t:rig}, with overwhelming probability, $t|m_t^{(\bT)}-\mfct|\leq  \psi t/(N\eta)$, we have
\begin{align*}
\left|\bE_\bT\left[\tilde{\cal E}_{b_1b_2}^{(1)}\tilde{\cal E}_{b_3b_4}^{(1)}\cdots \tilde{\cal E}_{b_{2\ell-1}b_{2\ell}}^{(1)}\right]\right|\leq \left(\frac{\psi t}{N\eta}\right)^\ell\prod_{i=1}^{\ell}\delta_{b_{2i-1}b_{2i}}
\leq \left(\frac{\psi t}{N\eta}\right)^\ell\chi(b_1,b_2,\cdots, b_{2\ell}).
\end{align*}
For  \eqref{E2}, it is a product of normal variables, which does not vanish only if each variable occurs even number of times. Thus \eqref{E2} follows, and the implicit constant is from the moment of normal variables, and can be bounded by $(2\ell-1)!!$.

In the following we prove \eqref{E3}. The entries of $\cal E^{(3)}$ are given by
\begin{align*}
\cal E_{b_{2i-1}b_{2i}}^{(3)}=t\sum_{\beta_{2i-1},\beta_{2i}\notin \bT}\left(w_{b_{2i-1}\beta_{2i-1}}w_{b_{2i}\beta_{2i}}-\frac{\delta_{b_{2i-1}b_{2i}}\delta_{\beta_{2i-1}\beta_{2i}}}{N}\right)G_{\beta_{2i-1}\beta_{2i}}^{(\bT)}.
\end{align*}
Therefore, the lefthand side of $\eqref{E3}$ is bounded by 
\begin{align*}
t^\ell\sum_{\beta_1\beta_2,\cdots \beta_{2\ell}\notin\bT}\left|\bE_{\bT}\left[\prod_{i=1}^{\ell}\left(w_{b_{2i-1}\beta_{2i-1}}w_{b_{2i}\beta_{2i}}-\frac{\delta_{b_{2i-1}b_{2i}}\delta_{\beta_{2i-1}\beta_{2i}}}{N}\right)\right]\right|\left|G_{\beta_1\beta_2}^{(\bT)}\cdots G_{\beta_{2\ell-1}\beta_{2\ell}}^{(\bT)}\right|
\end{align*}
For each monomial of resolvent entries $G_{\beta_1\beta_2}^{(\bT)}\cdots G_{\beta_{2\ell-1}\beta_{2\ell}}^{(\bT)}$, we associate it with a labeled graph $\cal G$ in the following procedure: We denote the frequency representation of the array $(\beta_1,\beta_2,\cdots, \beta_{2\ell})$ as $\gamma_1^{d_1}\gamma_2^{d_2}\cdots \gamma_v^{d_v}$, where $d_i\geq1$ is the multiplicity of $\gamma_i$, and $v=|\{\beta_1,\beta_2,\cdots, \beta_{2\ell}\}|$.
We construct the labeled graph $\cal G$ with vertex set $\{\gamma_1,\gamma_2,\cdots, \gamma_v\}$ and $\ell$ edges $(\beta_{2i-1},\beta_{2i})$ for $1\leq i\leq \ell$ (if $\beta_{2i-1}=\beta_{2i}$, the edge $(\beta_{2i-1},\beta_{2i})$ is a self-loop).
We denote $s$ the number of self-loops in $\cal G$. For any vertex $\gamma_i\in \cal G$, its degree is given by $d_i$, where self-loop adds two to the degree.
% We denote $v=|\cal G|:=|\{\beta_1,\beta_2,\cdots, \beta_{2k}\}|$, the number of vertices. For any vertex $\beta\in \cal G$, we denote its degree by $\deg(\beta)$, where self-loop adds two to the degree. We also denote $s$ the number of self-loops in $\cal G$. 
It is easy to see that \eqref{E3} follows from combining the following two estimates:
\begin{align}\label{e:gbound}
\left|\bE_{\bT}\left[\prod_{i=1}^{\ell}\left(w_{b_{2i-1}\beta_{2i-1}}w_{b_{2i}\beta_{2i}}-\frac{\delta_{b_{2i-1}b_{2i}}\delta_{\beta_{2i-1}\beta_{2i}}}{N}\right)\right]\right|\lesssim_\ell \frac{1}{N^\ell} \rho(\cal G)\chi(b_1,b_2,\cdots,b_{2\ell}),
\end{align}
where the implicit constant is from the moment of normal variables, and can be bounded by $(2\ell-1)!!$; and with overwhelming probability, uniformly for any $z\in \cal D_\kappa$,
\begin{align}\label{e:resoprod}
\sum_{\beta_1\beta_2,\cdots \beta_{2\ell}\notin\bT}\left|G_{\beta_1\beta_2}^{(\bT)}\cdots G_{\beta_{2\ell-1}\beta_{2\ell}}^{(\bT)}\right|\rho(\cal G)\lesssim_\ell \frac{(\psi\log N)^{\ell} N^{\ell/2}}{\eta^{\ell/2}},
\end{align}
where $\rho(\cal G)$ is an indicator function, which equals one if each vertex of $\cal G$ is incident to two different edges, otherwise it is zero. For any graph $\cal G$ with $\rho(\cal G)=1$, we count the total number of edge-vertex pairs, such that the vertex is incident to the edge: each self-loop contributes to $1$, and each non self-loop contributes to two, so the total number is $s+2(\ell-s)$; since each vertex of $\cal G$ is incident to at least two different edges, the total number is at least $2v$. Therefore, we have the following relation between $v$, $s$ and $\ell$:
\begin{align}\label{e:rela}
2v\leq 2(\ell-s)+s=2\ell-s.
\end{align}

For the first bound \eqref{e:gbound}, we denote the set $B=\{(b_j,\beta_j)\}_{1\leq j\leq 2\ell}$. Then the product in \eqref{e:gbound} can be rewritten as 
\begin{align*}
\prod_{i=1}^{\ell}\left(w_{b_{2i-1}\beta_{2i-1}}w_{b_{2i}\beta_{2i}}-\frac{\delta_{b_{2i-1}b_{2i}}\delta_{\beta_{2i-1}\beta_{2i}}}{N}\right)
=\prod_{(b,\beta)\in B} w_{b\beta}^{e_1(b,\beta)}(w_{b\beta}^2-1/N)^{e_2(b,\beta)},
\end{align*}
where $e_1(b,\beta)=|\{1\leq i\leq \ell:\text {exact one of $(b_{2i-1},\beta_{2i-1}), (b_{2i},\beta_{2i})$ is $(b,\beta)$}\}|$ and $e_2(b,\beta)=|\{1\leq i\leq \ell:(b_{2i-1},\beta_{2i-1})=(b_{2i},\beta_{2i})=(b,\beta)\}|$. Since for $(b,\beta)\in B$, $w_{b\beta}$ are independent normal random variables, \eqref{e:gbound} does not vanish only if $e_1(b,\beta)$ is even and $e_1(b,\beta)+e_2(b,\beta)\geq 2$ for any $(b,\beta)\in B$, which implies $\rho(\cal G)\chi(b_1,b_2,\cdots,b_{2\ell})=1$. Therefore, we have
\begin{align*}
\bE_{\bT}\left[\prod_{(b,\beta)\in B} w_{b\beta}^{e_1(b,\beta)}(w_{b\beta}^2-1/N)^{e_2(b,\beta)}\right]
&\lesssim_{\ell}\frac{1}{N^{\sum_{(b,\beta)\in B}e_1(b,\beta)/2}N^{\sum_{(b,\beta)\in B}e_2(b,\beta)}}\rho(\cal G)\chi(b_1,b_2,\cdots,b_{2\ell})\\
&=\frac{1}{N^{\ell}}\rho(\cal G)\chi(b_1,b_2,\cdots,b_{2\ell}),
\end{align*}
and \eqref{e:gbound} follows.

For the second bound \eqref{e:resoprod}, by Proposition \ref{p:resoentry}, with overwhelming probability we have
\begin{align*}
|G_{\beta_{2i-1}\beta_{2i}}^{(\bT)}|\leq 
\left\{
\begin{array}{cc}
\psi (|g_{\beta_{2i-1}}||g_{\beta_{2i}}|)^{1/2}, & \beta_{2i-1}=\beta_{2i},\\
\psi (|g_{\beta_{2i-1}}||g_{\beta_{2i}}|)^{1/2}/\sqrt{N\eta} , &\beta_{2i-1}\neq \beta_{2i}.
\end{array}
\right.
\end{align*}
In terms of the graph $\cal G$, the first bound corresponds to self-loops, and the second bound corresponds to non self-loop edges. In the graph $\cal G$, there are $s$ self-loops and $\ell-s$ non self-loop edges. The product of resolvent entries can be bounded as
\begin{align*}
\left|G_{\beta_1\beta_2}^{(\bT)}\cdots G_{\beta_{2\ell-1}\beta_{2\ell}}^{(\bT)}\right|\rho(\cal G)
\leq &\frac{ \psi^{\ell}}{\sqrt{N\eta}^{\ell-s}}\prod_{i=1}^{v}|g_{\gamma_i}|^{\frac{d_i}{2}}\rho(\cal G),
%\leq &\frac{2^s \psi^{k-s}}{\sqrt{N\eta}^{k-s}}\prod_{\beta\in \cal G}\frac{cN\log N}{\eta^{\frac{\deg(\beta)}{2}-1}}\rho(\text{mindeg } \cal G\geq 2),
\end{align*}
with overwhelming probability. Notice that $\rho(\cG)=1$ implies that $d_i\geq 2$. The index set $(\beta_1,\beta_2,\cdots, \beta_{2\ell})$ induces a partition $\cal P$ on the set $\{1,2,\cdots, 2\ell\}$ such that $i$ and $j$ are in the same block if and only if $\beta_i=\beta_j$. If two index sets induce the same partition, they correspond to isomorphic graphs (when we forget the labeling). Therefore, for \eqref{e:resoprod}, we can first sum over the index sets corresponding to the same partition and then sum over different partitions:
\begin{align*}
&\sum_{\beta_1,\cdots,\beta_{2\ell}\notin \bT}\left|G_{\beta_1\beta_2}^{(\bT)}\cdots G_{\beta_{2\ell-1}\beta_{2\ell}}^{(\bT)}\right|\rho(\cal G)
=\sum_{\cal P}\sum_{(\beta_1,\cdots,\beta_{2\ell})\sim \cal P}\left|G_{\beta_1\beta_2}^{(\bT)}\cdots G_{\beta_{2\ell-1}\beta_{2\ell}}^{(\bT)}\right|\rho(\cal G)\\
&\leq \sum_{\cal P}\frac{ \psi^{\ell}}{\sqrt{N\eta}^{\ell-s}}\sum_{\gamma_1,\cdots,\gamma_v\notin \bT}\prod_{i=1}^v|g_{\gamma_i}|^{\frac{d_i}{2}}\rho(\cal G)
\lesssim_\ell \sum_{\cal P}\frac{ \psi^{\ell}}{\sqrt{N\eta}^{\ell-s}}\prod_{i=1}^{v}\frac{N\eta\log N}{\eta^{\frac{d_i}{2}}}\\
&\leq\psi^{\ell}\sum_{\cal P}\frac{(N\eta\log N)^{v}}{ (N\eta)^{(\ell-s)/2}\eta^\ell}
\leq \psi^{\ell}\sum_{\cal P}\frac{(N\eta\log N)^{\ell-s/2}}{ (N\eta)^{(\ell-s)/2}\eta^{\ell}}
\lesssim_\ell \frac{(\psi\log N)^{\ell} N^{\ell/2}}{\eta^{\ell/2}}
\end{align*}
where the second inequality follows from \eqref{e:0moment}, in the third inequality, we used $\sum_i d_i=2\ell$, for the second to last inequality, we used the bound $v\leq \ell-s/2$ from \eqref{e:rela}, and in the last inequality, we bounded the total number of different partitions by $(2\ell)!$.
\end{proof}

\section{Short Time Relaxation}\label{relaxation}

The Dyson Brownian motion \eqref{DBM} induces the following two dynamics on eigenvalues and eigenvectors, 
\begin{align}
\rd\la_k(t)&=\frac{\rd b_{kk}(t)}{\sqrt{N}}+\left(\frac{1}{N}\sum_{\ell\neq k}\frac{1}{\la_k(t)-\la_\ell(t)}\right)\rd t\label{eqn:eiflow},\\
\rd \bmu_k(t)&=\frac{1}{\sqrt{N}}\sum_{\ell\neq k}\frac{\rd b_{k\ell}(t)}{\lambda_k(t)-\lambda_\ell(t)}\bmu_\ell(t)
-\frac{1}{2N}\sum_{\ell\neq k}\frac{\rd t}{(\la_k-\la_\ell)^2}\bmu_k(t)\label{eqn:evflow},
\end{align}
where $B_t=(b(t))_{1\leq i,j\leq N}$ is symmetric with $(b_{ij}(t))_{1\leq i\leq j\leq N}$ a family of independent Brownian motions with variance $(1+\delta_{ij})t$. Following the convention of \cite[Definition 2.2]{BoYaQUE}, we call them Dyson Brownian motion for \eqref{eqn:eiflow} and Dyson vector flow for \eqref{eqn:evflow}.

In order to study the Dyson vector flow, the {\it moment flow} was introduced in \cite[Section 3.1]{BoYaQUE}, where the observables are the moments of projections of the eigenvectors onto a given direction. For any unit vector $\bmq\in \bR^N$, and any index $1\leq k\leq N$, define: $z_k(t)=\sqrt{N}\langle \bmq, \bmu_k(t)\rangle$, where with the $\sqrt{N}$ normalization, the typical size of $z_k$ is of order $1$. The normalized test functions are 
\begin{align}
&{\rQ_{ t}}_{ i_1,\dots,i_m}^{j_1,\dots,j_m}= \prod_{\ell=1}^mz_{i_\ell}^{2j_\ell} \label{eqn:rescaleSym}
\prod_{\ell=1}^m a(2j_\ell)^{-1}\ \mbox{where}\ a(2j)=(2j-1)!!,
\end{align}
These indices, $\{(i_1,  j_1),  \dots, (i_m , j_m) \}$ with distinct $i_k$'s and positive $j_k$'s can be encoded as a particle configuration $\bm \eta=(\eta_1,\eta_2,\cdots,\eta_N)$ on $\qq{1, N}$ such that $\eta_{i_k}=j_k$ for $1\leq k\leq m$ and $\eta_p=0$ if $p\notin\{i_1,i_2,\cdots, i_m\}$. The total number of particles is $\cN(\boeta):=\sum \eta_\ell=\sum j_k$. We denote the particles in non-decreasing order by $x_1(\bm\eta) \leq x_2(\bm\eta)\leq \cdots\leq x_{\cN(\boeta)}(\boeta)$. If the context is clear we will drop the dependence on $\bm\eta$. We also say the support of $\boeta$ is $\{i_1,i_2,\cdots, i_m\}$.  It is easy to see that this is a bijection between test functions ${\rQ_{ t}}_{ i_1,\dots,i_m}^{j_1,\dots,j_m}$ and particle configurations. We define $\boeta^{i j}$ to be the configuration by moving one particle from $i$ to $j$. For any pair of $n$ particle configurations $\boeta$: $1\leq x_1\leq x_2\leq\cdots \leq x_n\leq N$ and $\boxi$: $1\leq y_1\leq y_2\leq \cdots \leq y_n \leq N$, we  define the following distance: 
\begin{equation}\label{dis}
d(\boeta, \boxi) = \sum_{\alpha=1}^n |x_\alpha-y_\alpha|.
\end{equation}

We condition on the trajectory of the eigenvalues, and define 
\beq\label{feq}
  f^{H_0}_{\bla, t} (\boeta)  
  =\bE^{H_0} ( {\rQ_{ t}}_{i_1,\dots,i_m}^{j_1,\dots,j_m} (t)\mid \bla ),
\bEq
where $\boeta$ is  the  configuration  $ \{(i_1,  j_1),  \dots, (i_m , j_m ) \}$. Here $\bla$ denotes the whole path of eigenvalues for $0\leq t\leq 1$.
 The dependence in the initial matrix $H_0$ will often be omitted so that we write  $f_{t}=f^{H_0}_{\bla, t}$. We will call $f_{t}$ the {\it eigenvector moment flow}, which is governed by the following generator $\mathscr{B}(t)$ \cite[Theorem 3.1]{BoYaQUE}:

\begin{theorem}\label{t:emf}[Eigenvector moment flow] Let $\bmq\in\mathbb{R}^N$ be a unit vector, $z_k=\sqrt{N}\langle\bmq, \bmu_k (t) \rangle$ and $c_{ij}(t)= (\lambda_i(t) - \lambda_j(t))^{-2}/N$.
Suppose that $ f_{t} (\boeta)$ is given by \eqref{feq} where   
$\boeta$ denote  the  configuration  $ \{(i_1,  j_1),  \dots, (i_m , j_m ) \}$. \nc Then 
$f_{ t}$
satisfies the equation  
\begin{align}
\label{ve}
&\partial_t f_{t} =  \mathscr{B}(t)  f_{t},\\
\label{momentFlotSym}&\mathscr{B}(t)  f_t(\boeta) = \sum_{i \neq j} c_{ij}(t) 2 \eta_i (1+ 2 \eta_j) \left(f_t(\boeta^{i, j})-f_t(\boeta)\right).
\end{align}
\end{theorem}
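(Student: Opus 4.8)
\noindent\textbf{Proof strategy for Theorem~\ref{t:emf}.}
The plan is to push the Dyson vector flow \eqref{eqn:evflow} down to the scalar observables $z_k(t)=\sqrt N\langle\bmq,\bmu_k(t)\rangle$ and apply It\^o's formula to the monomial $\rQ_t$ conditionally on the eigenvalue trajectory $\bla$. Taking $\sqrt N\langle\bmq,\cdot\rangle$ of \eqref{eqn:evflow} gives the linear It\^o equation
\[
\rd z_k=\frac1{\sqrt N}\sum_{\ell\neq k}\frac{z_\ell}{\lambda_k(t)-\lambda_\ell(t)}\,\rd b_{k\ell}(t)-\frac12\sum_{\ell\neq k}c_{k\ell}(t)\,z_k\,\rd t,\qquad c_{k\ell}(t)=\frac{1}{N(\lambda_k(t)-\lambda_\ell(t))^2}.
\]
The structural fact I would record next is that, conditionally on the whole path $\bla$, this is a well-posed linear SDE with $\bla$-measurable coefficients $c_{k\ell}(t)$, driven by the off-diagonal Brownian motions $(b_{k\ell})_{k\neq\ell}$ which remain a family of independent Brownian motions independent of $\bla$: in the Dyson dynamics the eigenvalues solve the autonomous system \eqref{eqn:eiflow} driven only by the diagonal noises $(b_{kk})_k$, which are independent of the off-diagonal ones. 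Consequently $f_t(\boeta)=\bE^{H_0}[\rQ_t\mid\bla]$ is differentiable in $t$ and the stochastic-integral part of any It\^o expansion has vanishing $\bla$-conditional expectation.

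Next I would apply It\^o's formula to $\rQ_t=\prod_{\ell}z_{i_\ell}^{2j_\ell}\prod_{\ell}a(2j_\ell)^{-1}$. Using the (conditional) independence of the $b_{k\ell}$'s, the bracket processes are $\rd\langle z_k,z_k\rangle=\sum_{\ell\neq k}c_{k\ell}z_\ell^2\,\rd t$ and $\rd\langle z_k,z_{k'}\rangle=-c_{kk'}z_kz_{k'}\,\rd t$ for $k\neq k'$, so the generator of the conditioned diffusion acts on a smooth function $F$ of $(z_k)$ by
\[
\mathscr L F=\sum_{i\neq j}c_{ij}\Big(\tfrac12 z_j^2\,\partial_{z_i}^2F-\tfrac12 z_iz_j\,\partial_{z_i}\partial_{z_j}F-\tfrac12 z_i\,\partial_{z_i}F\Big),
\]
where the last term is the drift of $z_k$ forced by $\|\bmu_k\|=1$. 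Taking $\bla$-conditional expectations yields $\partial_t f_t(\boeta)=\bE^{H_0}[\mathscr L\rQ_t\mid\bla]$.

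It then remains to evaluate $\mathscr L$ on the even monomial $m(\boeta)=\prod_k z_k^{2\eta_k}$. Direct differentiation gives $\tfrac12 z_j^2\partial_{z_i}^2 m(\boeta)=\eta_i(2\eta_i-1)\,m(\boeta^{ij})$, $-\tfrac12 z_iz_j\partial_{z_i}\partial_{z_j}m(\boeta)=-2\eta_i\eta_j\,m(\boeta)$ and $-\tfrac12 z_i\partial_{z_i}m(\boeta)=-\eta_i\,m(\boeta)$; in particular $\mathscr L m(\boeta)$ is a combination of $m(\boeta^{ij})$ and $m(\boeta)$ alone, so the resulting equation closes within each sector of fixed particle number. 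Writing $f_t(\boeta)=\bE^{H_0}[m(\boeta)\mid\bla]/\prod_k a(2\eta_k)$ and using the double-factorial identity $a(2\eta_i-2)\,a(2\eta_j+2)/\big(a(2\eta_i)\,a(2\eta_j)\big)=(2\eta_j+1)/(2\eta_i-1)$, the hopping coefficient $\eta_i(2\eta_i-1)$ in front of $m(\boeta^{ij})$ becomes $\eta_i(2\eta_j+1)$, which equals the diagonal coefficient $2\eta_i\eta_j+\eta_i$; keeping track of the $(1+\delta_{ij})$ normalisation of the variances in \eqref{DBM} this reassembles into $\partial_t f_t(\boeta)=\mathscr B(t)f_t(\boeta)$ with $\mathscr B(t)$ as in \eqref{momentFlotSym}.

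The step I expect to require the most care is the conditional-independence claim: one must justify rigorously that conditioning on the whole eigenvalue path leaves the off-diagonal driving Brownian motions intact and independent of $\bla$, so that It\^o's formula may be applied to $\rQ_t$ and the martingale term discarded after conditioning. This is a genuine, if by now standard, fact about the joint eigenvalue/eigenvector Dyson dynamics --- the eigenvalue SDE being autonomous and driven only by the diagonal noise --- and is the one place where probabilistic input rather than pure algebra is used; once it is available, the rest is It\^o's formula together with the elementary double-factorial computation. A secondary point is that the whole argument should be run on the full-measure event where the spectrum of $H_t$ stays simple for $t>0$, so that the $c_{ij}(t)$ and the flow are well defined throughout.
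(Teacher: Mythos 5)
Your route is the same as that of the source from which the paper quotes this statement (Theorem \ref{t:emf} is cited from \cite{BoYaQUE} and not reproved here): condition on the eigenvalue path so that \eqref{eqn:evflow} becomes a linear SDE with $\bla$-measurable coefficients, driven by the off-diagonal Brownian motions, which remain independent Brownian motions after conditioning because \eqref{eqn:eiflow} is autonomous and driven only by the diagonal noises; then apply It\^o's formula to the monomials $\prod_k z_k^{2\eta_k}$ and close the hierarchy with the $(2j-1)!!$ identities. Your brackets $\rd\langle z_k,z_k\rangle=\sum_{\ell\neq k}c_{k\ell}z_\ell^2\,\rd t$ and $\rd\langle z_k,z_{k'}\rangle=-c_{kk'}z_kz_{k'}\,\rd t$, the action of the generator on the even monomials, and the double-factorial identity are all correct, as are the two points you flag (conditional independence, simplicity of the spectrum for $t>0$).

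The gap is the final sentence of your computation. What you have actually derived is $\partial_t f_t(\boeta)=\sum_{i\neq j}c_{ij}(t)\,\eta_i(1+2\eta_j)\bigl(f_t(\boeta^{ij})-f_t(\boeta)\bigr)$, and nothing in your argument produces the additional overall factor $2$ in \eqref{momentFlotSym}. Appealing to ``the $(1+\delta_{ij})$ normalisation of the variances in \eqref{DBM}'' cannot supply it: that factor only gives the diagonal noises variance $2t$, and those drive the eigenvalue equation \eqref{eqn:eiflow} alone, while the eigenvector equation \eqref{eqn:evflow} is driven by the off-diagonal noises of variance $t$, which you have already used when computing the brackets. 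A one-particle check makes the mismatch concrete: your (correct) computation gives $\partial_t f_t(k)=\sum_{\ell\neq k}c_{k\ell}\bigl(f_t(\ell)-f_t(k)\bigr)$, whereas \eqref{momentFlotSym} asserts twice this rate. So, as written, you have established the moment-flow equation only up to an overall constant $2$, i.e.\ up to the time change $t\mapsto 2t$ (equivalently, \eqref{momentFlotSym} corresponds to off-diagonal driving noise of variance $2t$, the normalisation under which the quoted form should be read). To complete the proof you must either carry out the bookkeeping in the convention that genuinely produces the factor $2$, or state explicitly that your generator differs from \eqref{momentFlotSym} by this constant — which is harmless for every application in the paper, but cannot simply be asserted away by a vague appeal to the variance normalisation.
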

\noindent An important property of the eigenvector moment flow is the reversibility with respect to  a simple explicit equilibrium measure:
\beq\label{eqn:weight}
\pi(\boeta) = \prod_{p=1}^N \phi(\eta_p), \ \phi(k) =\prod_{i=1}^k\left(1-\frac{1}{2i}\right).
\bEq
And for any function $f$ on the configuration space, the Dirichlet form is given by 
\begin{align*}
\sum_{\bm\eta}\pi(\bm\eta)f(\bm\eta)\mathscr{B}(t)f(\bm \eta)=\sum_{\boeta}  \pi(\boeta) \sum_{i \neq j}  c_{ij}  \eta_i (1+ 2 \eta_j) \left(f(\boeta^{i j}) - f(\boeta)\right)^2.
\end{align*}

We are interested in the eigenvectors corresponding to eigenvalues on the interval $[E_0-r, E_0+r]$, and we only have local information of the initial matrix $H_0$. However, the operator $\mathscr{B}(t)$ has long range interactions. We fix a short range parameter $\ell$, and split $\mathscr{B}(t)$ into short-range part and long range part: $\mathscr{B}(t)=\mathscr{S}(t)+\mathscr{L}(t)$, with
\begin{align}
&(\mathscr{S} f_t)(\boeta)  =   \sum_{0<|j-k|\leq \ell }  c_{jk}(t) 2 \eta_j (1+ 2 \eta_k) \left(f_t(\boeta^{j k}) - f_t(\boeta)\right),\label{eqn:shortrange}\\
&(\mathscr{L}f_t) (\boeta) =    \sum_{|j-k|>\ell}  c_{jk}(t) 2 \eta_j (1+ 2 \eta_k ) \left(f_t(\boeta^{jk}) - f_t(\boeta)\right).\notag
\end{align}\label{eqn:longrange}
Notice that $\mathscr{S}$ and $\mathscr{L}$ are also reversible with respect to the measure $\pi$ (as in \eqref{eqn:weight}). We denote by $\rU_\mathscr{B}(s,t)$ ($\rU_\mathscr{S}(s,t)$ and $\rU_\mathscr{L}(s,t)$) the semigroup associated with $\mathscr{B}$ ($\mathscr{S}$ and $\mathscr{L}$) from time $s$ to $t$, i.e.
\begin{align*}
\del_t \rU_\mathscr{B}(s,t)=\mathscr{B}(t)\rU_\mathscr{B}(s,t).
\end{align*}

For any $\eta_*\ll t\ll r$, In the rest of this section, we fix time $t_0$ and the range parameter $\ell$, such that $\eta_*\ll t_0\leq t\leq t_0+\ell/N$, which we will choose later.  We will show that the effect of the long-range operator $\mathscr{L}(t)$ is negligible in the sense of $L^{\infty}$ norm, i.e. $f_t(\boeta)\approx {\rm U}_{\mathscr{S}}(t_0,t)f_{t_{0}}(\boeta)$; and the short-range operator $\mathscr{S}(t)$ satisfies certain finite speed of propagation estimate, and \eqref{eqn:shortrange} converges to equilibrium exponentially fast with rate $N$. As a consequence, $f_t(\boeta)\approx 1$ and Theorem \ref{t:normal} follows.

\subsection{Finite Speed of Propagation}

In this section, we fix some small parameter $0<\kappa<1$, and define the following efficient distance on $n$ particle configurations:
\begin{equation}\label{effdis}
 \tilde{d}(\boeta,\boxi)=\max_{1\leq \alpha\leq n}\#\{i\in\qq{1,N}: \gamma_{i}(t_0)\in I_{\kappa}^{r}(E_0), i\in \qq{x_\alpha, y_{\alpha}}\},
 \end{equation}
where $\boeta$: $1\leq x_1\leq x_2\leq\cdots \leq x_n\leq N$ and $\boxi$: $1\leq y_1\leq y_2\leq \cdots \leq y_n \leq N$, and $\gamma_i(t_0)$ are classical eigenvalue locations at time $t_0$ (as in \eqref{eqn:eigloc}).

%In this section, we will condition on $\bm\lambda(t_0)=\bm\lambda$ for some ``good'' eigenvalue configuration $\bm\lambda$. We call an eigenvalue congfiguration $\bm\la$ good if for $N$ large enough and any large number $D$ the following holds: $\bP(\sup_{t_0\leq s\leq r/\psi}|m_s(z)-m_{{\rm{fc}},s}(z)|\leq \psi (N\eta)^{-1}|\bm\la(t_0)=\bm\la)\geq 1-N^{-D}$  uniformly for $z\in \cal D_\kappa$; and $ \bP(\sup_{t_0\leq s\leq r/\psi}|\lambda_{i}(s)-\gamma_{i}(s)|\leq\psi N^{-1}|\bm\la(t_0)=\bm\la)\geq 1-N^{-D}$ uniformly for indices $i$ such that $\gamma_i(t_0)\in I_r^{\kappa}(E_0)$. By Theorem \ref{t:rig}, with overwhelming probability, $\bm\la(t_0)$ is a good eigenvalue configuration. 

In this section, we will condition on $\bm\lambda(t_0)=\bm\lambda$ for some ``good'' eigenvalue configuration $\bm\lambda$. We call an eigenvalue configuration $\bm\la$ good if we condition on $\bm \la(t_0)=\bm\la$, for $N$ large enough the following holds with overwhelming probability:
\begin{enumerate}
\item %\begin{align}\label{e:mtest}
$\sup_{t_0\leq s\leq t}|m_s(z)-m_{{\rm{fc}},s}(z)|\leq \psi (N\eta)^{-1}$,
%\end{align}
uniformly for any $z\in \cal D_\kappa$;
\item
%\begin{align}\label{e:eigest}
$\sup_{t_0\leq s\leq t}|\lambda_{i}(s)-\gamma_{i}(s)|\leq\psi N^{-1}$,
%\end{align}
uniformly for indices $i$ such that $\gamma_i(t)\in I_r^{\kappa}(E_0)$. 
\end{enumerate}
By Theorem \ref{t:rig}, combining with a simple continuity argument, $\bm\la(t_0)$ is a good eigenvalue configuration with overwhelming probablity.

\begin{lemma}\label{l:fspeed}
 Under the Assumption \ref{a:boundImm}, for any $\eta_*\ll t\ll r$,  we fix time $t_0$ and the range parameter $\ell$, such that $\eta_*\ll t_0\leq t\leq t_0+\ell/N\ll r$.  For any $n$ particle configurations $\bm \eta$: $1\leq x_1\leq x_2\leq\cdots \leq x_n\leq N$, and $\bm \xi$: $1\leq y_1\leq y_2\leq \cdots \leq y_n \leq N$, with $\tilde{d}(\boeta,\boxi)\geq \psi\ell/2$, then there exists a universal constant $c$, for $N$ large enough, the following holds with overwhelming probability:
 \begin{align}\label{eqn:fsp}
\sup_{t_0\leq s\leq t} \rU_{\mathscr S}(t_0,s)\delta_{\boeta} (\boxi)\leq e^{-c\psi},
 \end{align}
if we condition on $\bm\la(t_0)=\bm\la$, for any good eigenvalue configuration $\bm\la$.
\end{lemma}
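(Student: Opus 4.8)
The plan is to establish \eqref{eqn:fsp} by a weighted $L^2$ (energy) estimate for the short‑range flow, in the spirit of the finite‑speed‑of‑propagation arguments for Dyson Brownian motion. Set $u_s:=\rU_{\mathscr S}(t_0,s)\delta_{\boeta}$, which solves $\partial_s u_s=\mathscr S(s)u_s$ with $u_{t_0}=\delta_{\boeta}$; it is nonnegative, continuous and of finite variation in $s$, and by reversibility of $\mathscr S$ with respect to $\pi$ the quantity $\rU_{\mathscr S}(t_0,s)\delta_{\boeta}(\boxi)$ is comparable, up to the bounded weights $\pi$, to a transition probability of the associated particle process. Throughout we condition on $\bla(t_0)=\bla$ good, so that the rigidity bounds of Theorem~\ref{t:rig} hold with overwhelming probability on all of $[t_0,t]$; we work on that event, on which $|\lambda_i(s)-\gamma_i(s)|\le\psi/N$ and $|\partial_s\gamma_i(s)|\le C\log N$ uniformly over bulk indices and $s\in[t_0,t]$. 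Since $\td(\boeta,\boxi)$ is a maximum over the $n$ particles and the short‑range flow preserves the relative order of particles, a union bound over the particle label $\alpha$ and the sign reduces the problem to bounding $u_s(\boxi)$ over configurations $\boxi$ whose $\alpha$‑th particle $y_\alpha$ lies, say, to the right of $x_\alpha$ with at least $\psi\ell/2$ bulk classical locations in between.

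\textbf{Weight and energy inequality.} Fix a nondecreasing $1$‑Lipschitz profile $\theta\colon\bR\to\bR$ equal to the identity on a neighbourhood of $I_\kappa^r(E_0)$ and constant far outside, and for a parameter $\nu>0$ put $W_s(\boxi)=\exp(\nu\,\theta(\lambda_{y_\alpha}(s)))$ and $G_s=\sum_{\boxi}\pi(\boxi)W_s(\boxi)u_s(\boxi)^2$. Differentiating along the flow and using the Dirichlet form identity recalled after Theorem~\ref{t:emf} (the dependence of $W_s$ on the Brownian motion of $\lambda_{y_\alpha}$ being handled by It\^o's formula; its martingale part has zero drift, and the It\^o drift $\tfrac1N\sum_{l\ne y_\alpha}(\lambda_{y_\alpha}-\lambda_l)^{-1}$ together with $\partial_s\gamma_{y_\alpha}$ are controlled by rigidity), one obtains on the rigidity event
\begin{align*}
\partial_s G_s\;\le\;\big(C\nu\log N+C\nu^2/N\big)G_s\;-\;2D_s\;+\;(\text{cross term}),
\end{align*}
with $D_s\ge0$ the $W$‑weighted Dirichlet form. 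The key point is that $\theta$ is Lipschitz in the \emph{energy} variable, so that for any short‑range transition $0<|j-k|\le\ell$ affecting $y_\alpha$,
\begin{align*}
\big|W_s(\boxi^{jk})-W_s(\boxi)\big|\;\lesssim\;\nu\,|\lambda_j(s)-\lambda_k(s)|\,W_s(\boxi),
\qquad\text{hence}\qquad
c_{jk}(s)\,\big(W_s(\boxi^{jk})-W_s(\boxi)\big)^2\;\lesssim\;\frac{\nu^2}{N}\,W_s(\boxi)^2 ,
\end{align*}
since $c_{jk}(s)=(N(\lambda_j(s)-\lambda_k(s))^2)^{-1}$: the singular jump rates cancel exactly against the weight increments, uniformly over all pairs. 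A Cauchy--Schwarz step absorbing part of the cross term into $D_s$, together with $\sum_{0<|j-k|\le\ell}\xi_j(1+2\xi_k)\lesssim\cN(\boxi)^2\ell$, bounds the remaining cross term by $C\cN^2(\nu^2\ell/N)G_s$, so that $\partial_s G_s\le C\nu\big(\log N+\cN^2\nu\ell/N\big)G_s$ on $[t_0,t]$.

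\textbf{Conclusion.} Because $t-t_0\le\ell/N$, Gr\"onwall (combined with Doob's / Markov's inequality and the continuity of $s\mapsto G_s$ to pass from $\bE G_s$ to a pathwise bound on a fine mesh in $s$) gives $G_s\le e^{O(1)}G_{t_0}$ provided $\nu\le cN/(\cN\ell\log N)$; we fix $\nu$ at this value. Since $G_{t_0}=\pi(\boeta)W_{t_0}(\boeta)$, for every admissible $\boxi$ and every $s\in[t_0,t]$,
\begin{align*}
u_s(\boxi)^2\;\le\;\frac{G_s}{\pi(\boxi)W_s(\boxi)}\;\le\;C\,\frac{\pi(\boeta)}{\pi(\boxi)}\,\exp\!\Big(-\nu\big(\theta(\lambda_{y_\alpha}(s))-\theta(\lambda_{x_\alpha}(t_0))\big)\Big).
\end{align*}
By rigidity, $\lambda_{x_\alpha}(t_0)\approx\gamma_{x_\alpha}(t_0)$ and $\lambda_{y_\alpha}(s)\approx\gamma_{y_\alpha}(t_0)$ up to errors $O(\psi/N)+O((\log N)\ell/N)$, while having at least $\psi\ell/2$ bulk classical locations between $x_\alpha$ and $y_\alpha$ forces, by the upper bound on the free‑convolution density, $\gamma_{y_\alpha}(t_0)-\gamma_{x_\alpha}(t_0)\gtrsim\psi\ell/N$; as these errors are $\ll\psi\ell/N$ (since $\psi=N^{\fc}\gg\log N$) we get $\theta(\lambda_{y_\alpha}(s))-\theta(\lambda_{x_\alpha}(t_0))\gtrsim\psi\ell/N$. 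With $\nu\sim N/(\cN\ell\log N)$ and $\pi(\boeta)/\pi(\boxi)=O_{\cN}(1)$ this yields $u_s(\boxi)\le e^{-c\psi}$ (the logarithm being absorbed into $\psi=N^{\fc}$ by adjusting $c$), and undoing the union bound proves \eqref{eqn:fsp}.

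\textbf{Main obstacle.} The delicate point, and the reason an index‑based weight cannot be used, is that the rates $c_{jk}(s)$ are only bounded by $(N(\lambda_j(s)-\lambda_k(s))^2)^{-1}$, which is enormous whenever two eigenvalues are atypically close; no bound on $\sup_j\sum_k c_{jk}(s)$ is available from rigidity. The resolution is precisely the energy‑Lipschitz choice of $W$: where $c_{jk}$ is large the corresponding weight increment is small, so $c_{jk}(s)|W(\boxi^{jk})-W(\boxi)|^2\lesssim(\nu^2/N)W^2$ regardless, which is what converts the estimate into a control of \emph{net displacement} rather than of the number of jumps. The remaining technical points — making the energy identity rigorous despite $W_s$ inheriting the Brownian roughness of $\lambda_{y_\alpha}$ (handled by It\^o, or by replacing $\lambda_{y_\alpha}(s)$ with the smooth classical location $\gamma_{y_\alpha}(s)$ and treating the two regimes $|j-k|\gtrsim\psi$ and $|j-k|\lesssim\psi$ separately via rigidity), the bookkeeping of particle labels under near‑range jumps, and the unavoidable $\log N$ and $\cN$ losses — are routine once this cancellation is in place.
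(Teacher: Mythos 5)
Your overall scheme---a weighted $L^2$ estimate with an exponential weight that is Lipschitz in the \emph{energy} variable, so that the singular jump rates $c_{jk}$ cancel against the squared weight increments---is exactly the paper's strategy (the paper's $\phi_s=e^{\nu\varphi_s}$ with $\|\varphi_i'\|_\infty\le1$, $\nu=N/\ell$, and the cancellation \eqref{e:cutbound} is your $c_{jk}\,|W(\boxi^{jk})-W(\boxi)|^2\lesssim \nu^2 W^2/N$, corresponding to \eqref{line2}). The genuine gap is in your treatment of the It\^o \emph{drift} of the weight. Applying It\^o to $W_s(\boxi)=\exp(\nu\,\theta(\lambda_{y_\alpha}(s)))$ produces the term $\nu\,\theta'(\lambda_{y_\alpha})\,\frac1N\sum_{l\neq y_\alpha}(\lambda_{y_\alpha}-\lambda_l)^{-1}$ multiplying $W u^2$, and you assert this is ``controlled by rigidity.'' Rigidity plus the dyadic argument controls only the long-range part $|l-y_\alpha|>\ell$ (this gives the $\nu\log N$ term, the paper's \eqref{line6}); the short-range part $0<|l-y_\alpha|\le\ell$ contains $1/(\lambda_{y_\alpha}-\lambda_{y_\alpha\pm1})$, which is \emph{not} bounded by any power of $\log N$ with overwhelming probability---no level-repulsion input is available here, and this is the very same small-gap singularity you correctly flag for the jump rates. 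In the paper this term, \eqref{line7}, is the heart of the proof: it is symmetrized over site pairs $i<k$ with $|i-k|\le\ell$, the resulting expression is given a sign using the monotonicity \eqref{monopsi} of $w\mapsto\varphi_w'$ (this is precisely why one needs a \emph{family} of profiles anchored at $\gamma_{x_\alpha}(t_0)$ rather than a single $\theta$), rewritten as a difference $v_s(\boxi)^2-v_s(\bar\boxi)^2$ along particle exchanges, and finally absorbed via AM--GM into the negative Dirichlet form \eqref{line1} coming from the $u^2$ evolution. Your Cauchy--Schwarz absorption addresses only the weight-increment cross term (the analogue of \eqref{line2}), not this drift term, so your energy inequality $\partial_s G_s\lesssim \nu(\log N+\cN^2\nu\ell/N)G_s$ is unjustified. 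Your suggested fallback---evaluating the weight at $\gamma_{y_\alpha}(s)$ instead of $\lambda_{y_\alpha}(s)$---removes the It\^o drift but destroys the key cancellation: when a gap is tiny, $c_{jk}$ blows up while the increment of a $\gamma$-based weight does not shrink, so $c_{jk}|W(\boxi^{jk})-W(\boxi)|^2$ is no longer $O(\nu^2W^2/N)$.

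A secondary, fixable issue: with your choice $\nu\sim N/(\cN\ell\log N)$ the displacement $\gtrsim\psi\ell/N$ only yields $\exp(-c\psi/\log N)$; the logarithm cannot be ``absorbed by adjusting $c$,'' so the stated bound $e^{-c\psi}$ is not reached. The paper instead keeps $\nu=N/\ell$, accepts the polynomial factor $e^{C(t-t_0)\nu\log N}\le N^{C}$ in $\bE[X_t^*]$ (controlled via BDG and Gr\"onwall), and beats it by the weight gain $e^{c\nu\psi\ell/N}=e^{c\psi}$ through Markov's inequality, using $\psi=N^{\fc}\gg\log N$.
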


 Thanks to the Markov property of the Dyson Brownian motion, we know that the conditioned law $(\bm\la(t))_{t\geq t_0}|\bm\lambda (t_0)=\bm\lambda$ is the same as Dyson Brownian motion starting at $\bm \la$. In the proof, we will neglect the conditioning in \eqref{eqn:fsp}, and simply think it as a Dyson eigenvalue flow starting at $\bm\la$. The proof of Lemma \ref{l:fspeed} consists of three steps. In the first step, we introduce some notations and define $X_s$, the weighted sum of $\rU_{\mathscr S}(t_0,s)\delta_{\boeta} (\boxi)$ over all configurations $\boxi$. In the second step, we prove Lemma \ref{l:fspeed} given the estimate \eqref{eqn:upperbound} of $X_s$. In the last step we prove the estimate \eqref{eqn:upperbound} by analyzing the stochastic equation of $X_s$. 
 
\begin{proof}
\emph{First Step:}
We denote $\nu=N/\ell$ and $r_s(\boeta,\boxi)= \rU_{\mathscr S}(t_0,s)\delta_{\boeta}(\boxi)$.  We define a family of cut-off functions $g_w$ parametrized by $w\in \bR$ by demanding that $\inf_x g_w(x) = 0$ and define $g_w'$ by  considering the following three cases: 
\begin{enumerate}
\item  $w \le E_0- (1-2\kappa)r$. Define 
\begin{align*}
g'_w (x)= \begin{cases} 1  &\mbox{if }   x\in I_{2\kappa}^r(E_0) \\ 
0 & \mbox{if }   x \notin I_{2\kappa}^r(E_0) \end{cases}    
\end{align*}
\item $ w\in I_{2\kappa}^r(E_0)$. Define 
\begin{align*}
g'_w (x)=   \begin{cases} 1  &\mbox{if }   x\ge w, \; x\in I_{2\kappa}^r(E_0)  \\ 
-1  &\mbox{if }   x<  w, \; x\in I_{2\kappa}^r(E_0)  \\ 
0 & \mbox{if }   x \not \in I_{2\kappa}^r(E_0) \end{cases}  
\end{align*}
\item  $w \ge E_0+(1-2\kappa)r$. Define 
\begin{align*}
g'_w (x)= \begin{cases} -1  &\mbox{if }   x\in I_{2\kappa}^r(E_0)  \\ 
0 & \mbox{if }   x \not \in I_{2\kappa}^r(E_0)\end{cases}    
\end{align*}
\end{enumerate}
It is easy to see that for any fixed $x$, as a function of $w$, $g_w'(x)$ is non-increasing. 
We take $\chi$ a smooth, nonnegative function, compactly supported on $[-1,1]$ with $\int\chi(x)\rd x=1$. We also define the smoothed version of $g_w$, 
$\varphi_i(x)=\int g_{\gamma_i(t_0)}(x-y)\nu\chi(\nu y)\rd y$.
Then $\varphi_i$ is smooth,
$\|\varphi_i'\|_\infty\leq1$ and $\|\varphi_i''\|_\infty\leq \nu$. Moreover, $\varphi_i(\gamma_i(t_0))\leq 1/\nu$, and $\varphi_i(x)$ all vanish for $x\leq E_0-(1-2\kappa)r-\ell/N$ or $x\geq E_0+(1-2\kappa)r+\ell/N$. From the monotonicity of $g_w'(x)$, for any $a\leq b$, we have $\lambda_a(t_0)\leq \lambda_b(t_0)$, so 
\beq\label{monopsi}
\varphi'_a(x)-\varphi'_b(x)  \geq 0.
\bEq

  We define the stopping time $\tau$, which is the first time $s\geq t_0$ such that either of the following fails: \rn{1}) $|m_s(z)-m_{\rm{fc},s}(z)|\leq \psi (N\eta)^{-1}$  uniformly for $z\in \cal D_\kappa$; \rn{2}) $ |\lambda_{i}(s)-\gamma_{i}(s)|\leq\psi N^{-1}$ uniformly for indices $i$ such that $\gamma_i(s)\in I_r^{\kappa}(E_0)$. By our assumption that $\bm\la(t_0)$ is a good configuration, we have that $\tau\geq t$ with overwhelming probability.
Recall the inverse Stieltjest transform, $\rho_{{\rm fc},s}(E)=\lim_{\eta\rightarrow 0} \Im[m_{{\rm fc},s}(E+\ri \eta)]/\pi$. By Proposition \ref{p:mfc}, the density $\rho_{{\rm fc},s}(E)$  is lower and upper bounded on $I_\kappa^r(E_0)$, on the scale $\eta\geq \psi^4/N$, and the same holds for and $N^{-1}\sum \delta_{\lambda_i(s)}$ by rigidity. Thus, there exists some universal constant $C$ such that for any $t_0\leq s\leq t$, and interval $I$ centered in $I_{\kappa}^r(E_0)$, with $|I|\geq \psi^4/N$, 
  \begin{align}\label{eqn:stoptime}
 C^{-1}|I|N\leq   \#\{i: \gamma_i(s\wedge \tau)\in I\}, \#\{i: \lambda_i(s\wedge \tau)\in I\} \leq C|I|N.
  \end{align}
%{\cor Either $\gamma_i(s\wedge\tau)\in I_{\kappa}^r(E_0)$ or $\gamma_i(s\wedge\tau)\not\in I_{\kappa}^r(E_0)$ and $\lambda_i(s\wedge\tau)\not\in I_{2\kappa}^r(E_0)$. } 

  For any configuration $\bm \xi$ with $n$ particles we define
  \begin{align*}
   \quad \varphi_s(\bm \xi)\deq\sum_{\alpha=1}^{n}\varphi_{x_\alpha}(\lambda_{y_\alpha}(s\wedge \tau)), \quad \phi_s(\bm \xi)\deq e^{\nu \varphi_{s}(\bm \xi)},\quad v_s(\bm\xi)\deq\phi_s(\bm \xi)r_{s\wedge\tau}(\bm \eta, \bm \xi),\quad 
   X_s \deq\sum_{\bm \xi}\pi(\bm \xi)v_s(\bm \xi)^2,
  \end{align*}
  where $\pi$ is the reversible measure with respect to the eigenvector moment flow (as in \eqref{eqn:weight}). 
  
 \emph{Second Step:} We denote $X_t^*:=\sup_{t_0\leq s\leq t}X_s$ (by our definition, $X_s$ is always positive). We claim that \eqref{eqn:fsp} follows from the estimate
\begin{align}\label{eqn:upperbound}
\bE[X_t^*]\leq C e^{C(t-t_0)\nu \log N},
\end{align}  
where $C$ is a constant depending on $n$. In fact, \eqref{eqn:upperbound} implies that
  \begin{align}\label{e:marest}
   \bE\left[\sup_{t_0\leq s\leq t}e^{2N\nu\sum_{\alpha=1}^{n}\varphi_{x_\alpha}(\lambda_{y_\alpha}(s\wedge \tau))}r_{s\wedge \tau}^2(\bm \eta, \bm \xi)\right]\leq  Ce^{C(t-t_0)\nu\log N}.
  \end{align}
Under the assumption that $\tilde{d}(\boeta,\boxi)\geq \psi\ell/2$, there exists some index $1\leq \alpha\leq n$ (by symmetry, we can assume $x_\alpha\leq y_\alpha$) such that
  \begin{align*}
  \#\{i: \gamma_i(t_0)\in I^r_{\kappa}(E_0), i\in \qq{x_\alpha, y_\alpha}\}\geq \psi \ell/2,
  \end{align*}
 then it follows from \eqref{eqn:stoptime} that $|[\gamma_{x_\alpha}(t_0),\gamma_{y_\alpha}(t_0)]\cap I_{2\kappa}^r(E_0)|\gtrsim\psi\ell/N$, and thus $ \varphi_{x_\alpha}(\gamma_{y_{\alpha}}(t_0))-\varphi_{x_\alpha}(\gamma_{x_{\alpha}}(t_0))\gtrsim\psi \ell/N$.  We can lower bound $\varphi_{x_\alpha}(\lambda_{y_{\alpha}}(s\wedge \tau))$ as
\begin{align}\begin{split}\label{e:lowbound1}
\varphi_{x_\alpha}(\lambda_{y_{\alpha}}(s\wedge \tau))
\geq& \varphi_{x_\alpha}(\gamma_{y_{\alpha}}(t_0))-|\varphi_{x_\alpha}(\lambda_{y_{\alpha}}(s\wedge \tau))-\varphi_{x_\alpha}(\gamma_{y_{\alpha}}(s\wedge\tau))|\\
-&|\varphi_{x_\alpha}(\gamma_{y_{\alpha}}(s\wedge \tau))-\varphi_{x_\alpha}(\gamma_{y_{\alpha}}(t_0))|.
\end{split}\end{align}
For the second term in \eqref{e:lowbound1}, since either $\gamma_{y_\al}(s\wedge \tau)\in I_{\kappa}^{r}(E_0)$, and $|\lambda_{y_\al}(s\wedge \tau)-\gamma_{y_\al}(s\wedge \tau)|\leq \psi/N$, or $\gamma_{y_\al}(s\wedge \tau)\notin I_{\kappa}^{r}(E_0)$, and $\varphi_{x_\alpha}(\lambda_{y_{\alpha}}(s\wedge \tau))=\varphi_{x_\alpha}(\gamma_{y_{\alpha}}(s\wedge \tau))=0$. In both cases $|\varphi_{x_\alpha}(\lambda_{y_{\alpha}}(s\wedge \tau))-\varphi_{x_\alpha}(\gamma_{y_{\alpha}}(s\wedge\tau))|\lesssim \psi/N$. For the third term in \eqref{e:lowbound1}, we have
\begin{align*}
|\varphi_{x_\alpha}(\gamma_{y_{\alpha}}(s\wedge \tau))-\varphi_{x_\alpha}(\gamma_{y_{\alpha}}(t_0))|
\leq \int_{t_0}^s|\varphi'(\gamma_{y_\al}(\sigma\wedge\tau)||\gamma_{y_\al}'(\sigma\wedge \tau)|(s-t_0)\lesssim \log N \ll \psi\ell/N,
\end{align*}
where we used \eqref{e:dergamma}. As a consequence we have $\varphi_{x_\alpha}(\lambda_{y_{\alpha}}(s\wedge \tau))
\gtrsim \psi\ell/N$, for any $t_0\leq s\leq t$. It then follows by combining with \eqref{e:marest}, 
  \begin{align*}
  \bE[\sup_{t_0\leq s\leq t }r_{s\wedge \tau}(\bm \eta, \bm \xi)^2]\leq  e^{-c\psi}.  \end{align*}
Since $\bm\la(t_0)$ is a good eigenvalue configuration, with overwhelming probability we have $\tau\geq t$, Therefore, \eqref{eqn:fsp} follows by the Markov inequality.

%  then we have $\varphi_{x_\alpha}(\lambda_{y_{\alpha}}(s\wedge \tau))\geq \psi \ell/3N$, for any $t_0\leq s\leq t$, so that
%  \begin{align*}
%  \bE[\sup_{t_0\leq s\leq t }r_{s\wedge \tau}(\bm \eta, \bm \xi)^2]\leq  \exp\{-\psi/4\}.
%  \end{align*}
%  By the rigidity of eigenvalues, with overwhelming probability we have $\tau\geq t$, Therefore, \eqref{eqn:fsp} follows by the Markov inequality. 

\emph{Third Step:} In the following we prove \eqref{eqn:upperbound}. We decompose $X_s$ as $X_s=M_s+A_s$, where $M_s$ is a continuous local martingale with $M_{t_0}=0$, and $A_s$ is a continuous adapted process of finite variance. We denote $A_t^*:=\sup_{t_0\leq s\leq t}A_s$, and $M_t^*:=\sup_{t_0\leq s\leq t}|M_t|$. Then we have that $X_t^*\leq M_t^*+A_t^*$. For $M_t^*$ we will bound it by Burkholder-Davis-Gundy inequality:
\begin{align}\label{eqn:BDG}
\bE\left[\left(M_t^*\right)^2\right]\leq C\, \bE\left[\int_{t_0}^t\langle \rd M_s, \rd M_s\rangle\right].
\end{align}
For $A_t^*$, since $A_t$ is a finite variance process, we will directly upper bound $\del_s A_s$, and
\begin{align}\label{eqn:upA*}
\bE\left[A_t^*\right] \leq \bE\left[A_{t_0}+\int_{t_0}^t (\del_s A_s \vee 0 )\rd s\right].
\end{align}

By It\'{o}'s formula we have
  \begin{align}
   \rd X_s =
          &\sum_{\bm \xi}\pi (\bm\xi)\sum_{|k-j|\leq \ell}c_{kj}2\xi_k(1+2\xi_j)\left(\frac{\phi_s(\bm \xi^{kj})}{\phi_s(\bm\xi)} +\frac{\phi_s(\bm\xi)}{\phi_s(\bm\xi^{kj})}-2\right) v_s(\bm \xi^{kj})v_s(\bm \xi)\rd (s\wedge \tau)\label{line2}\\
          +&\sum_{\bm\xi}\pi(\bm\xi)r_{s\wedge \tau}^{2}(\boeta,\boxi)\langle \rd \phi_s(\bm \xi), \rd\phi_s(\bm \xi)\rangle\label{line4}\\
          +&2\sum_{\bm\xi} \pi(\bm\xi)v_s(\bm\xi)r_{s\wedge \tau}(\bm \eta,\boxi)\rd \phi_s(\bm \xi)\label{line3}\\
          -&\sum_{\bm \xi}\pi(\bm \xi)\sum_{|k-j|\leq \ell}c_{kj}2\xi_k(1+2\xi_j)(v_s(\bm \xi^{kj})-v_s(\bm \xi))^2\rd (s\wedge \tau).\label{line1}
  \end{align}
The martingale part comes from \eqref{line3},
\begin{align*}
\rd M_s=2\sum_{\bm\xi} \pi(\bm\xi)v_s(\bm\xi)^2 \nu \sum_{\alpha=1}^n \varphi_{x_\alpha}'(\lambda_{y_{\alpha}}(s\wedge r))\frac{\rd b_{y_\alpha y_\alpha}(s\wedge\tau)}{\sqrt{N}}.
\end{align*}
Since $\|\varphi'_i\|_{\infty}\leq 1$, we have
\begin{align*}
\langle \rd M_s, \rd M_s\rangle\lesssim_n \frac{\nu^2}{N}X_s^2\rd s\wedge \tau.
\end{align*}
Therefore, combining with $\eqref{eqn:BDG}$, we have 
\begin{align}\label{eqn:quadX}
\bE\left[\left(M_t^*\right)^2\right]\lesssim_n \frac{\nu^2}{N}\bE\left[\int_{t_0}^t  X_s^2\rd s\right]=\frac{\nu^2}{N}\int_{t_0}^t \bE[X_s^2]\rd s
\end{align}

To understand \eqref{eqn:upA*} and \eqref{eqn:quadX}, we need an upper bound of $\del A_s$, which is the finite variance part of $\rd X_s$. Thanks to the choice of $\varphi_i$'s, we can directly upper bound \eqref{line2} and \eqref{line4} in terms of $X_s$. For \eqref{line3}, we upper bound it by taking advantage of its cancellation with \eqref{line1}.

Firstly, for \eqref{line2}, we need the following estimate: for $|k-j|\leq \ell$,
  \begin{align}\label{e:cutbound}
   \left|\frac{\phi_s(\bm \xi^{kj})}{\phi_s(\bm\xi)} +\frac{\phi_s(\bm\xi)}{\phi_s(\bm\xi^{kj})}-2\right|\lesssim \nu^2|\lambda_k-\lambda_j|^2.
  \end{align}
  We assume that $j<k$, then there exists $1\leq p<q\leq n$ such that $y_{p-1}\leq j<y_{p}$ (we set $y_0=0$) and $y_{q-1}<k=y_q$ (recall $y_q= y_q(\boxi)$) and
  \begin{align*}
   |\varphi_{s}(\bm \xi^{kj})-\varphi_s(\bm\xi)|\leq \sum_{\alpha=p}^{q}|\varphi_{x_\alpha}(\lambda_{y_{\alpha-1}\vee j})-\varphi_{x_\alpha}(\lambda_{y_\alpha})|
  \end{align*}
Since $y_{\alpha}-(y_{\alpha-1}\vee j)\leq k-j\leq \ell$, by our choice of stopping time $\tau$,  if $\lambda_{y_{\alpha-1}\vee j}\leq E_0-(1-\kappa)r$, then $\lambda_{y_{\alpha}}\leq E-(1-\kappa)r+C\ell/N$, where $C$ is from \eqref{eqn:stoptime}, and both $\varphi_{x_\alpha}(\lambda_{y_{\alpha-1}\vee j})$ and $\varphi_{x_\alpha}(\lambda_{y_\alpha})$ vanish. Especially we have  $\varphi_{x_\alpha}(\lambda_{y_{\alpha-1}\vee j})-\varphi_{x_\alpha}(\lambda_{y_\alpha})=0$. Similarly, if $\lambda_{y_{\alpha}}\geq E+(1-\kappa)r$, then $\lambda_{y_{\alpha-1}\vee j}\geq E+(1-\kappa)r-C\ell/N$, and $\varphi_{x_\alpha}(\lambda_{y_{\alpha-1}\vee j})-\varphi_{x_\alpha}(\lambda_{y_\alpha})=0$. Therefore, 
 \begin{align*}
   |\varphi_{s}(\bm \xi^{kj})-\varphi_s(\bm\xi)|\lesssim \left|[\lambda_{y_p\vee j}(s\wedge \tau), \lambda_{y_q}(s\wedge \tau)]\cap I_{
   \kappa}^r(E_0)\right|\lesssim \min \{|\lambda_j(s\wedge \tau)- \lambda_k(s\wedge\tau)|, \nu^{-1}\}.
  \end{align*}
  where we used \eqref{eqn:stoptime} again.  This estimate leads to \eqref{e:cutbound}:
  \begin{align*}
   \left|\frac{\phi_s(\bm \xi^{kj})}{\phi_s(\bm\xi)} +\frac{\phi_s(\bm\xi)}{\phi_s(\bm\xi^{kj})}-2\right|
   =\left|\exp{\frac{\nu(\varphi_{s}(\bm \xi^{kj})-\varphi_s(\bm\xi))}{2}}- \exp{\frac{\nu(\varphi_{s}(\bm \xi)-\varphi_s(\bm\xi^{kj}))}{2}}\right|^2\lesssim \nu^2|\lambda_k-\lambda_j|^2.
  \end{align*}
Combining with  \eqref{e:cutbound}, it follows that 
  \begin{align}\label{eqn:line2bound}
   \eqref{line2}\lesssim \frac{\nu^2}{N}\sum_{\bm \xi}\pi (\bm\xi)\sum_{|k-j|\leq \ell}2\xi_k(1+2\xi_j) v_s(\bm \xi^{kj})v_s(\bm \xi)\rd (s\wedge \tau)\lesssim_n \frac{\nu^2\ell}{N}X_s\rd (s\wedge \tau).
  \end{align}

  For \eqref{line4}, we have the bound
  \begin{align}\label{eqn:line4bound}
   \eqref{line4}=\nu^2 X_s \sum_{\alpha=1}^{n}\frac{\varphi_{x_\alpha}'^2(\lambda_{y_\alpha}(s))}{N}\rd (s\wedge \tau)\lesssim_n \frac{\nu^2}{N}X_s\rd (s\wedge \tau).
  \end{align}

  For \eqref{line3}, the finite variance part is given by
  \begin{align}
   \label{line5}&\sum_{\bm \xi}\pi(\bm\xi)v_s(\bm\xi)^2\frac{1}{N}\sum_{\alpha=1}^n\left(\nu \varphi_{x_\alpha}''(\lambda_{y_{\alpha}})+\nu^2\varphi_{x_\alpha}'^2(\lambda_{y_\alpha})\right)\rd (s\wedge \tau)\\
   \label{line6}+&2\nu\sum_{\bm \xi}\pi(\bm\xi)v_s(\bm\xi)^2\sum_{\alpha=1}^n\varphi_{x_\alpha}'(\lambda_{y_\alpha})\frac{1}{N}\sum_{|k-y_\alpha|>\ell}\frac{\rd (s\wedge \tau)}{\lambda_{y_\alpha}-\lambda_k}\\
   \label{line7}+&2\nu\sum_{\bm \xi}\pi(\bm\xi)v_s(\bm\xi)^2\sum_{\alpha=1}^n\varphi_{x_\alpha}'(\lambda_{y_\alpha})\frac{1}{N}\sum_{0<|k-y_\alpha|\leq \ell}\frac{\rd (s\wedge \tau)}{\lambda_{y_\alpha}-\lambda_k}.
  \end{align}
  By our choice of the cutoff function,  $|\nu \varphi_{x_\alpha}''(\lambda_{y_{\alpha}})+\nu^2\varphi_{x_\alpha}'^2(\lambda_{y_\alpha})|\lesssim \nu^2$
  \begin{align}\label{eqn:line5bound}
   \eqref{line5}\lesssim_n \frac{\nu^2}{N}X_s\rd(s\wedge \tau).
  \end{align}

  For \eqref{line6}, we either have  $\lambda_{y_\alpha}\notin I_{2\kappa}^{r}(E_0)$, then $\varphi_{x_\alpha}'(\lambda_{y_\alpha})=0$; or $\lambda_{y_\alpha}\in I_{2\kappa}^{r}(E_0)$, in this case, by a dyadic decomposition argument similar to \eqref{e:dyadic}, we have :
  \begin{align*}
   \left|\frac{1}{N}\sum_{k:|k-y_\alpha|>\ell}\frac{1}{\lambda_{y_\alpha}(s\wedge \tau)-\lambda_k(s\wedge\tau)}\right|\lesssim \log N.
  \end{align*}
  Therefore we always have that
  \begin{align}\label{eqn:line6bound}
   \eqref{line6}\lesssim \nu\log N X_s\rd (s\wedge \tau).
  \end{align}

  Finally to bound \eqref{line7}, we symmetrize its summands
  \begin{align}
\notag & 2\nu\sum_{\bm \xi}\pi(\bm\xi)v_s(\bm\xi)^2\sum_{\alpha=1}^n\varphi_{x_\alpha}'(\lambda_{y_\alpha})\frac{1}{N}\sum_{0<|k-y_\alpha|\leq \ell}\frac{\rd (s\wedge \tau)}{\lambda_{y_\alpha}-\lambda_k}\\
\notag = &\frac{2\nu}{N}\sum_{0<k-i\leq \ell}\frac{\rd (s\wedge  \tau)}{\la_i-\la_k}
\sum_{\boxi}\pi(\boxi)v_s(\boxi)^2 \sum_{\alpha: y_\alpha=i}\varphi'_{x_\alpha}(\la_i)
+\frac{2\nu}{N}\sum_{0<i-k\leq \ell}\frac{\rd (s\wedge \tau)}{\la_i-\la_k}
\sum_{\boxi}\pi(\boxi)v_s(\boxi)^2 \sum_{\alpha: y_\alpha=k} \varphi'_{x_\alpha}(\la_k) \\
\notag=&  \frac{2\nu}{N}\sum_{0<k-i\leq \ell}\frac{\rd (s\wedge  \tau)}{\la_i-\la_k}
\sum_{\boxi}\pi(\boxi)v_s(\boxi)^2\left(\sum_{\alpha: y_\alpha=i}\varphi'_{x_\alpha}(\la_i)
-
\sum_{\alpha:y_\alpha=k}\varphi'_{x_\alpha}(\la_k)
\right)\\
\leq & \frac{2\nu}{N}\sum_{0<k-i\leq \ell}\frac{\rd (s\wedge  \tau)}{\la_i-\la_k}
\sum_{\boxi}\pi(\boxi)v_s(\boxi)^2\left(\sum_{\alpha: y_\alpha=i}\varphi'_{x_\alpha}(\la_i)
-
\sum_{\alpha:y_\alpha=k}\varphi'_{x_\alpha}(\la_i) \right)
+
O(n\nu X_s\rd (s\wedge  \tau)),\label{eqn:symm}
  \end{align}
where in the last inequality, we replaced $\varphi'_{x_\alpha}(\la_k)$ by $\varphi'_{x_\alpha}(\la_i)$. By our choice of $\varphi_i$, $|\varphi'_{x_\alpha}(\la_i)-\varphi'_{x_\alpha}(\la_k)|\leq \|\varphi''(x_{\alpha})\|_{\infty}|\la_i-\la_k|\leq \nu|\la_i-\la_k|$, and there are at most $2\ell n$ choices for the pairs $(k,i)$, so the error is at most 
$O(\nu^2\ell X_s/N)=O(n\nu X_s)$.

In all the following bounds, we
consider $i$ and 
$k$ as fixed indices. We also introduce the following subsets of configurations with $n$ particles, for any $0\leq q\leq p\leq n$:
$$
\mathcal{A}_{p}=\{\boxi:\xi_i+\xi_k=p\},\
\mathcal{A}_{p,q}=\{\boxi\in\mathcal{A}_p:\xi_i=q\}.
$$
We denote $\bar\boxi=(\bar\xi_1,\bar\xi_2,\cdots, \bar\xi_N)$  the configuration exchanging all particles from sites $i$ and $k$, i.e. $\bar\xi_i=\xi_k$, $\bar\xi_k=\xi_i$ and $\bar\xi_j=\xi_j$ if $j\neq i, k$. We denote the locations of particles of the configuration $\bar{\bm\xi}$: $1\leq \bar {y}_1\leq \bar{y}_2\cdots \leq \bar{y}_n\leq N$.
Using  $\pi(\boxi)=\pi(\bar\boxi)$, we can rewrite  the sum over $\boxi$ in (\ref{eqn:symm}) as
\begin{align}
\notag\frac{2}{\la_i-\la_k}\sum_{p=0}^n\sum_{q=0}^p\sum_{\boxi\in\mathcal{A}_{p,q}}\pi(\boxi)v_s(\boxi)^2
\left( \sum_{\alpha: y_\alpha=i}\varphi'_{x_\alpha}(\la_i)
-
\sum_{\alpha:y_\alpha=k}\varphi'_{x_\alpha}(\la_i) \right)\\
\notag=
\frac{1}{\la_i-\la_k}\sum_{p=0}^n\sum_{q=0}^{p}
\sum_{\boxi\in\mathcal{A}_{p,q}}\pi(\boxi)  
 v_s(\boxi)^2  
\left(\sum_{\alpha: y_\alpha=i}\varphi'_{x_\alpha}(\la_i)  
-
\sum_{\alpha:y_\alpha=k}\varphi'_{x_\alpha}(\la_i) \right)  \\
-
\frac{1}{\la_i-\la_k}\sum_{p=0}^n\sum_{q=0}^{p}
\sum_{\boxi\in\mathcal{A}_{p,q}}\pi(\boxi)  
 v_s(\bar{\bm \xi})^2  
\left(\sum_{\alpha: \bar{y}_\alpha=k}\varphi'_{x_\alpha}(\la_i)  
-
\sum_{\alpha:\bar{y}_\alpha=i}\varphi'_{x_\alpha}(\la_i) \right). \label{eqn:interm1}
\end{align}
For $i<k$, both index sets $\{\alpha: y_\alpha=k\}\cup\{\alpha:\bar{y}_\alpha=k\}$ and $\{\alpha: y_\alpha=i\}\cup\{\alpha:\bar{y}_\alpha=i\}$ has cardinality $p$, and the $j$-th largest number in the first set is larger than its counterpart in the second set. By \eqref{monopsi},  for any $a\leq b$, we have $\varphi'_{a}(x)\geq \varphi'_{b}(x)$.  This implies that
\begin{equation}\label{eqn:interm2}
\sum_{\alpha: y_\alpha=i}\varphi'_{x_\alpha}(\la_i)
-
\sum_{\alpha:y_\alpha=k}\varphi'_{x_\alpha}(\la_i) 
\geq
\sum_{\alpha:  \bar y_\alpha=k}\varphi'_{x_\alpha}(\la_i)
-
\sum_{\alpha:  \bar y_\alpha=i}\varphi'_{x_\alpha}(\la_i).
\end{equation}
Equations (\ref{eqn:interm1}) and (\ref{eqn:interm2}) together with $\la_i<\la_k$ give
\begin{align}
\notag&\quad\frac{2}{\la_i-\la_k}\sum_{p=0}^n\sum_{q=0}^p\sum_{\boxi\in\mathcal{A}_{p,q}}\pi(\boxi)v_s(\boxi)^2
\left( \sum_{\alpha: y_\alpha=i}\varphi'_{x_\alpha}(\la_i)
-
\sum_{\alpha:y_\alpha=k}\varphi'_{x_\alpha}(\la_i) \right)
\\
\notag&\leq
\frac{1}{\la_i-\la_k}\sum_{p=0}^n\sum_{q=0}^{p}\sum_{\boxi\in\mathcal{A}_{p,q}}\pi(\boxi)\left(
v_s(\boxi)^2-v_s(\bar\boxi)^2 \right)
\left( \sum_{\alpha: y_\alpha=i}\varphi'_{x_\alpha}(\la_i)
-
\sum_{\alpha:y_\alpha=k}\varphi'_{x_\alpha}(\la_i) \right) \\
&\lesssim_n
\frac{1 }{|\la_i-\la_k|}\sum_{\boxi}  \pi(\boxi) \left|
v_s(\boxi)^2-v_s(\bar\boxi)^2 \right|,\label{line8}
\end{align}
where we used, in the second inequality, $\|\psi_{x_\alpha}'\|_\infty\leq 1$.

Note that transforming $\boxi$ into $\bar\boxi$ can be achieved by transferring a particle for $i$ to $k$ (or $k$ to $i$) one by one at most $n$ times. More precisely, if $\boxi\in \cal A_{p,q}$ such that $q\leq p-q$, we can define $\bm \xi_{j+1}=\bm \xi_{j}^{ki}$, for $0\leq j\leq p-2q$. Then $\bm \xi_0=\bm \xi$ and $\bm \xi_{p-2q}=\bar{\bm \xi}$ and
\begin{align*}\pi(\boxi)  |
v_s(\boxi)^2-v_s(\bar\boxi)^2|\leq 
\sum_{j=0}^{p-2q-1}\pi(\boxi)  |
v_s(\boxi_j)^2-v_s(\boxi_j^{ki})^2|\lesssim_n \sum_{j=0}^{p-2q-1}\pi(\boxi_j)  |
v_s(\boxi_j)^2-v_s(\boxi_j^{ki})^2|
\end{align*}
where in the last inequality we used $\pi(\bm \xi)\lesssim_n \pi(\bm \xi_j)$. Therefore we can bound  \eqref{line8} as
\begin{align*}
&\frac{1}{|\la_i-\la_k|}\sum_{\boxi}  \pi(\boxi)  \left|
v_s(\boxi)^2-v_s(\bar\boxi)^2 \right|\leq
\frac{C}{|\la_i-\la_k|} \sum_{\boxi}  \pi(\boxi) |v_s(\boxi)^2-v_s(\boxi^{ki})^2|\\
\leq&
\ell\sum_{\boxi}   \pi(\boxi)  \frac{\left(v_s(\boxi)-v_s(\boxi^{ki})\right)^2}{N(\la_i-\la_k)^2}+
\frac{C^2N}{\ell}\sum_{\boxi}  \pi(\boxi) \left(v_s(\boxi)+v_s(\boxi^{ki})\right)^2
\end{align*}
where we used AM-GM inequality.  Finally, we obtain the following bound
  \begin{align}\label{eqn:line7bound}
   \eqref{line7}\leq 2\sum_{\bm \xi}\pi(\bm \xi)\sum_{0<|i-k|\leq \ell}\frac{(v_s(\bm \xi)-v_s(\bm \xi^{ki}))^2}{N(\lambda_i-\lambda_k)^2}\rd (s\wedge \tau)+C\nu X_s\rd (s\wedge \tau).
  \end{align}
Notice that the first term in \eqref{eqn:line7bound} combined with \eqref{line1} give a negative contribution. Thus, \eqref{eqn:line5bound}, \eqref{eqn:line6bound} and \eqref{eqn:line7bound} together lead to 
\begin{align}\label{eqn:line3bound}
\eqref{line3}+\eqref{line1}\lesssim_n \nu \log N X_s\rd (s\wedge \tau)
\end{align}
  
  \eqref{eqn:line2bound}, \eqref{eqn:line4bound} and \eqref{eqn:line3bound}, all together, give the following upper bound  on the finite variance part of $X_s$:
  \begin{align}\label{eqn:dAbound}
   \del_s A_s\lesssim_n \nu \log N X_s.
  \end{align}
  
  With \eqref{eqn:dAbound}, it is easy to estimate $\bE[X_t]$ and $\bE[X_t^2]$. For $X_t$, by taking expectation on both sides of \eqref{eqn:dAbound}, we have
 \begin{align*}
   \del_s \bE[X_s]\lesssim_n \nu \log N \bE[X_s].
  \end{align*}
  Therefore 
  \begin{align}\label{eqn:X}
  \bE[X_t]\lesssim_n e^{C(t-t_0)\nu\log N},
  \end{align}
   following from Gronwall's inequality, where $C$ is a constant depending on $n$. And we used that by rigidity we have $|\la_i(t_0)-\gamma_{i}(t_0)|\leq \psi/N$, and thus $|\varphi_i(\la_i(t_0))|\leq |\varphi_i(\gamma_i(t_0))|+|\varphi_i(\la_i(t_0))-\varphi_i(\gamma_i(t_0))|\lesssim 1/\nu$, and $\bE[X_{t_0}]\lesssim_n1$.
Similarly for $X_t^2$, by  It\'{o}'s formula, we have
\begin{align}
\notag \rd X_s^2
=& 2X_s\rd X_s +\langle \rd X_s, \rd X_s\rangle\\
\label{eqn:square}=&2X_s \rd M_s + 2X_s\rd A_s +\langle \rd M_s, \rd M_s \rangle.
\end{align}
We take expectation on both sides of \eqref{eqn:square} and obtain,
\begin{align*}
\del_s \bE[X_s^2] \lesssim_n \nu \log N \bE[X_s^2] +\nu^2N^{-1} \bE[X_s^2]\lesssim_n \nu \log N \bE[X_s^2] .
\end{align*}
Again by Gronwall's inequality, we have 
\begin{align}\label{eqn:X2}
\bE[X_t^2]\lesssim_n e^{C(t-t_0)\nu\log N}
\end{align}
\eqref{eqn:BDG}, \eqref{eqn:quadX}, \eqref{eqn:X} and \eqref{eqn:X2} together implies: 
\begin{align*}
\bE[X_t^*]\leq &\bE\left[\left(M_t^*\right)^2\right]^{1/2}+\bE[A_t^*]\\
\leq& \frac{\nu}{\sqrt{N}}\left(\int_{t_0}^t\bE[X_s^2]\rd s\right)^{1/2}+\bE[X_{t_0}]+\nu \log N \int_{t_0}^t\bE[X_s]\rd s 
\lesssim_n e^{C(t-t_0)\nu \log N}
\end{align*}
This finishes the proof of \eqref{eqn:upperbound}.
\end{proof}

We can take the event $A_2$ of trajectories $(\bm\la(s))_{0\leq s\leq t}$ such that: conditioning on the trajectories $(\bm \la(s))_{0\leq s\leq t}$, the short-range operator $\rU_{\mathscr{S}}$ satisfies,
\begin{align}\label{weakfsp}
\sup_{t_0\leq s\leq t}\rU_{\mathscr{S}}(t_0,s)\delta_{\boeta}(\boxi)\leq e^{-2c\psi}
\end{align}
for any pair of $n$ particle configurations $\boeta$ and $\boxi$ (notice that the total number of $n$ particle configurations is bounded by $N^n$) such that $\tilde{d}(\boeta,\boxi)\geq \psi \ell/2$.
Since with overwhelming probability $\bm\la(t_0)$ is a good eigenvalue configuration, combining with the Lemma \ref{l:fspeed}, we know that $A_2$ holds with overwhelming probability.

Thanks to the semi-group property of $\rU_{\mathscr{S}}$, for any $(\bm\la(s))_{0\leq s\leq t}\in A_2$, we claim, for $N$ large enough, the following hold: conditioning on the trajectories $(\bm \la(s))_{0\leq s\leq t}$, the short-range operator $\rU_{\mathscr{S}}$ satisfies, 
\begin{align}\label{modfsp}
\sup_{t_0\leq s'\leq s\leq t}\rU_{\mathscr{S}}(s',s)\delta_{\boeta}(\boxi)\leq e^{-c\psi},
\end{align}
for any pair of $n$ particle configurations $\boeta$ and $\boxi$ such that $\tilde{d}(\boeta,\boxi)\geq \psi \ell$. We prove the statement by contradiction. Assume there is a pair $\boeta_0$ and $\boxi_0$ with $\tilde{d}(\boeta_0,\boxi_0)\geq \psi \ell$ and time $t_0\leq s'\leq s\leq t$ such that \eqref{modfsp} fails. We take a function 
\begin{align*}
h=\sum_{\tilde{d}(\boeta,\boeta_0)\leq \psi\ell/2}\delta_{\boeta}
\end{align*}
 on the space of $n$ particle configurations. By triangular inequality, for any $\boeta$ such that $\tilde{d}(\boeta,\boeta_0)\leq \psi\ell/2$, we have $\tilde{d}(\boeta,\boxi_0)\geq \psi\ell/2$. Therefore by \eqref{weakfsp},  for sufficiently large $N$,
 \begin{align}\label{e:smallfun}
 \rU_{\mathscr{S}}(t_0,s)h(\boxi_0)\leq N^n e^{-2c\psi}.
 \end{align}
%However, since $\rU_{\mathscr{S}}(t_0,s')$ preserves the constant function
%\begin{align}\label{e:exps1}
%\rU_{\mathscr{S}}(t_0,s)h(\boxi_0)
%=\rU_{\mathscr{S}}(s',s)\rU_{\mathscr{S}}(t_0,s')\left(\mathbf1-\sum_{\tilde{d}(\boeta,\boeta_0)>\psi \ell/2}\delta_{\boeta}\right)(\boxi_0)
%\geq \frac{1}{2}\rU_{\mathscr{S}}(s',s)\delta_{\boeta} (\boxi)\geq e^{-\psi/6}/2
%\end{align}
By the same argument for \eqref{e:smallfun}, we have
\begin{align*}
\rU_{\mathscr{S}}(t_0,s')\left(\sum_{\tilde{d}(\boeta,\boeta_0)>\psi \ell/2}\delta_{\boeta}\right)(\boeta_0)
\leq N^n e^{-2c\psi}\leq \frac{1}{2}.
\end{align*}
Notice that $\rU_{\mathscr{S}}(t_0,s')$ preserves the constant function, we have
\begin{align*}
\rU_{\mathscr{S}}(t_0,s)h(\boxi_0)
=\rU_{\mathscr{S}}(s',s)\rU_{\mathscr{S}}(t_0,s')\left(\mathbf1-\sum_{\tilde{d}(\boeta,\boeta_0)>\psi \ell/2}\delta_{\boeta}\right)(\boxi_0)
\geq \frac{1}{2}\rU_{\mathscr{S}}(s',s)\delta_{\boeta_0} (\boxi_0)\geq e^{-c\psi}/2,
\end{align*}
which gives a contradiction with \eqref{e:smallfun}. Therefore, we have the following corollary of Lemma \ref{l:fspeed}:
\begin{corollary}
For any trajectory $(\bm\la(s))_{0\leq s\leq t}\in A_2$ as defined in \eqref{weakfsp}, conditioning on $(\bm\la(s))_{0\leq s\leq t}$, the short-range operator $\rU_{\mathscr{S}}$ satisfies: uniformly, for any function $h$ on the space of $n$ particle configurations, and particle configuration $\boxi$ which is away from the support of $h$ in the sense that $\tilde{d}(\boeta,\boxi)\geq \psi \ell$, for any $\boeta$ in the support of $h$, it holds
\begin{align*}
\sup_{t_0\leq s'\leq s\leq t}\rU_{\mathscr{S}}(s',s)h(\boxi)\leq \|h\|_\infty N^n e^{-c\psi}.
\end{align*}
\end{corollary}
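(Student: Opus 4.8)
The plan is to derive the Corollary directly from the finite-speed-of-propagation estimate \eqref{modfsp} proved just above, using nothing beyond linearity and positivity of the short-range propagator. First I would note that $\mathscr{S}(t)$ generates a time-inhomogeneous Markov jump process on the finite state space of $n$-particle configurations --- it transfers a single particle between sites at distance at most $\ell$ with nonnegative rates $c_{jk}(t)$ --- so each propagator $\rU_{\mathscr{S}}(s',s)$, $t_0\le s'\le s\le t$, is linear, positivity-preserving, and fixes constants; in particular $\rU_{\mathscr{S}}(s',s)\delta_{\boeta}(\boxi)\ge 0$ for all configurations $\boeta,\boxi$.

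Next, write $h=\sum_{\boeta\in\supp h}h(\boeta)\,\delta_{\boeta}$ and use linearity followed by positivity to obtain
\[
\bigl|\rU_{\mathscr{S}}(s',s)h(\boxi)\bigr|\le\sum_{\boeta\in\supp h}|h(\boeta)|\,\rU_{\mathscr{S}}(s',s)\delta_{\boeta}(\boxi)\le\|h\|_\infty\sum_{\boeta\in\supp h}\rU_{\mathscr{S}}(s',s)\delta_{\boeta}(\boxi).
\]
By hypothesis every $\boeta$ in $\supp h$ satisfies $\tilde d(\boeta,\boxi)\ge\psi\ell$, so \eqref{modfsp}, which already carries a supremum over $t_0\le s'\le s\le t$, bounds each summand by $e^{-c\psi}$. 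Since the number of $n$-particle configurations on $\qq{1,N}$ is at most $N^n$, summing over $\supp h$ yields $\|h\|_\infty N^n e^{-c\psi}$; taking the supremum over $t_0\le s'\le s\le t$ finishes the proof.

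I expect no genuine obstacle here: all of the substance already sits in Lemma \ref{l:fspeed} and in the contradiction argument that upgrades \eqref{weakfsp} to the range-$\psi\ell$ estimate \eqref{modfsp}. The Corollary is merely the passage from the test functions $\delta_{\boeta}$ to a general bounded $h$ via the triangle inequality, at the cost of the harmless combinatorial factor $N^n$; the one point to keep in view is that the bound must be uniform in the pair $(s',s)$, but this uniformity is inherited verbatim from \eqref{modfsp}.
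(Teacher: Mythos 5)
Your proposal is correct and follows essentially the route the paper intends: the paper states the corollary as an immediate consequence of the upgraded finite-speed estimate \eqref{modfsp}, and your decomposition $h=\sum_{\boeta\in\supp h}h(\boeta)\delta_{\boeta}$ together with linearity, positivity of the propagator, and the crude count of at most $N^n$ configurations is exactly the (left implicit) argument supplying the factor $\|h\|_\infty N^n e^{-c\psi}$. The only substantive content, as you note, lies in Lemma \ref{l:fspeed} and the contradiction argument upgrading \eqref{weakfsp} to \eqref{modfsp}, which you correctly take as given.
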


\subsection{Short time relaxation}

\begin{lemma}\label{l:regpath}
Under the Assumption \ref{a:boundImm}, for any $\eta_*\ll t\ll r$,  we fix time $t_0$ and the range parameter $\ell$, such that $\eta_*\ll t_0\leq t\leq t_0+\ell/N\ll r$. 
The Dyson Brownian motion $W_s$ (as in \eqref{DBM}) for $0\leq s\leq t$ induces a measure on the space of eigenvalues and eigenvectors $(\bm \lambda(s), \bm u(s))$ for $0\leq s\leq t$. The following event $A$ of trajectories holds with overwhelming probability:
\begin{enumerate}
\label{rigid}\rm\item The eigenvalue rigidity estimate holds:  $\sup_{t_0\leq s\leq t}|m_s(z)-m_{{\rm{fc}},s}(z)|\leq \psi (N\eta)^{-1}$  uniformly for $z\in \cal D_\kappa$; and $ \sup_{t_0\leq s\leq t}|\lambda_{i}(s)-\gamma_{i}(s)|\leq\psi N^{-1}$ uniformly for indices $i$ such that $\gamma_i(s)\in I_r^{\kappa}(E_0)$.
\rm\item When we condition on the trajectory $\bm \lambda\in A$, with overwhelming probability, the following holds 
\begin{align}\label{evcontrol}
\sup_{t_0\leq s\leq t}|\langle \bmq, G(s,z)\bmq\rangle-m_{{\rm{fc}},s}(z)|\leq \frac{1}{N^{\frak{b}}}+\frac{\psi^2}{\sqrt{N\eta}}
\end{align}
uniformly for $z\in \cal D_\kappa$.
\rm\item Finite speed of propagation holds: uniformly, for any function $h$ on the space of $n$ particle configurations, and particle configuration $\boxi$ which is away from the support of $h$ in the sense that $\tilde{d}(\boeta,\boxi)\geq \psi \ell$, for any $\boeta$ in the support of $h$, it holds
\begin{align}\label{finitespeed}
\sup_{t_0\leq s'\leq s\leq t}\rU_{\mathscr{S}}(s',s)h(\boxi)\leq \|h\|_\infty N^n e^{-c\psi}.
\end{align}
\end{enumerate}
\end{lemma}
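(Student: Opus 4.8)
The statement to prove is Lemma~\ref{l:regpath}, which collects three properties of the Dyson Brownian motion trajectories and asserts that each holds with overwhelming probability. Let me sketch the proof plan.

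\textbf{Proof proposal.} Each of the three items of Lemma~\ref{l:regpath} has already been established pointwise in the time parameter in the preceding sections; the content of the lemma is to collect them into a single trajectory event that holds with overwhelming probability, and to upgrade the pointwise‑in‑$s$ estimates to estimates uniform in $s\in[t_0,t]$. The plan is to set $A=A^{(1)}\cap A^{(2)}\cap A^{(3)}$, where $A^{(1)}$ is the event (on eigenvalue trajectories) that item~(1) holds, $A^{(3)}$ is the event $A_2$ of \eqref{weakfsp} together with its strengthening in the corollary of Lemma~\ref{l:fspeed} yielding \eqref{finitespeed}, and $A^{(2)}$ is the event on eigenvalue trajectories such that, conditioned on $\bm\lambda\in A^{(2)}$, the isotropic estimate \eqref{evcontrol} holds with overwhelming probability. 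Since $c_{ij}(s)=(\lambda_i(s)-\lambda_j(s))^{-2}/N$ depends on the eigenvalues alone, $A^{(1)}$ and $A^{(3)}$ are genuinely events on eigenvalue paths, while item~(2) is a conditional statement given the path — exactly the structure of the events $A_1$ and $A_2$ introduced earlier. Once each of $A^{(1)},A^{(2)},A^{(3)}$ is shown to hold with overwhelming probability, a union bound gives $\bP(A)\geq 1-N^{-D}$ for every $D$, and for $\bm\lambda\in A$ items~(1) and~(3) hold deterministically while item~(2) holds by the defining property of $A^{(2)}\supseteq A$.

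For $A^{(1)}$: Theorem~\ref{t:rig} gives, for each fixed $s$ and fixed $z\in\cal D_\kappa$, the bounds $|m_s(z)-\mfct(z)|\leq\psi(N\eta)^{-1}$ and $|\lambda_i(s)-\gamma_i(s)|\leq\psi N^{-1}$ with overwhelming probability. To make these uniform I would take a grid of mesh $N^{-C}$ (with $C$ large) in $[t_0,t]\times\cal D_\kappa$, apply Theorem~\ref{t:rig} and a union bound over the polynomially many grid points, and then interpolate: the Brownian increments satisfy $\|W_s-W_{s'}\|\leq N^{-C/3}$ with overwhelming probability whenever $|s-s'|\leq N^{-C}$, so $|\lambda_i(s)-\lambda_i(s')|\leq N^{-C/3}$ and $|m_s(z)-m_{s'}(z')|\lesssim\eta^{-2}\bigl(\|W_s-W_{s'}\|+|z-z'|\bigr)\leq N^{-C/4}$ because $\eta\geq\psi^4/N$ on $\cal D_\kappa$; the analogous Lipschitz bounds for $\mfct$ and $\gamma_i(\cdot)$ follow from \eqref{e:dergamma} and Proposition~\ref{p:mfc}. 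Taking $C$ large enough absorbs the interpolation error, and the same grid‑plus‑interpolation scheme applied to the corollary of Theorem~\ref{t:isolaw} produces $A^{(2)}$; the only difference is that the probability bound there is conditional on the eigenvalue path, which is harmless since the interpolation bounds for $\langle\bmq,G(s,z)\bmq\rangle$ and $\mfct(z)$ in $s$ and $z$ are again deterministic up to a Brownian‑increment factor. Finally, $A^{(3)}$ is precisely the event furnished by the corollary following Lemma~\ref{l:fspeed}, which holds with overwhelming probability because $\bm\lambda(t_0)$ is a good eigenvalue configuration with overwhelming probability (Theorem~\ref{t:rig} plus a continuity argument) and then Lemma~\ref{l:fspeed} applies.

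I expect no genuinely new difficulty here: the argument is an assembly of Theorem~\ref{t:rig}, the corollary of Theorem~\ref{t:isolaw}, and the corollary of Lemma~\ref{l:fspeed}, followed by a union bound. The only slightly delicate point is the uniform‑in‑$s$ (and in $z$) upgrade, where the grid must be chosen fine enough to beat the $\eta^{-O(1)}\leq N^{O(1)}$ derivative bounds while keeping the number of grid points polynomial, and one must check that the Brownian fluctuation of the eigenvalues over time increments of size $N^{-C}$ is negligible at the relevant scale $\psi/N$. Intersecting the three events then yields the trajectory event $A$ of the lemma.
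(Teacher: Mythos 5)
Your proposal matches the paper's treatment: the lemma is not given a separate proof there, but is exactly the assembly of the previously constructed events ($A_1$ for items (1)--(2), via Theorem~\ref{t:rig} and the corollary of Theorem~\ref{t:isolaw}, and $A_2$ together with the corollary of Lemma~\ref{l:fspeed} for item (3)), intersected and bounded by a union bound, with the uniformity in $s$ obtained by the same grid-plus-continuity argument you describe (the paper absorbs the resulting losses by taking $\frak c$ in $\psi=N^{\frak c}$ small). The only step you gloss over, which the paper also leaves implicit, is the Fubini/Markov argument converting the unconditional overwhelming-probability bound of Theorem~\ref{t:isolaw} into the conditional statement of item (2); this is routine and does not affect correctness.
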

Let the indices $b_1$, $b_2$, $b_1-d_1$ and $b_2+d_2$ be such that among all the classical eigenvalue locations at time $t_0$,  $\gamma_{b_1}(t_0)$, $\gamma_{b_2}(t_0)$, $\gamma_{b_1-d_1}(t_0)$ and $\gamma_{b_2+d_2}(t_0)$ are closest to $E_0-(1-7\kappa/4)r$, $E_0+(1-7\kappa/4)r$, $E_0-(1-3\kappa/2)r$ and $E_0+(1-3\kappa/2)r$ respectively. We further define $d=\min\{d_1,d_2\}$. We collects some facts here, which will be used throughout the rest of this section:
\begin{enumerate}
\item $m_s(z)$ is the Stieltjes transform of empirical eigenvalue distribution $N^{-1}\sum_{i=1}^N\delta_{\lambda_i(s)}$, and $\langle\bmq, G(s,z) \bmq\rangle$ can be viewed as the Stieltjes transform of the weighted spectral measure: 
$\sum_{i=1}^{N}\langle \bmu_i(s),\bmq\rangle^2 \delta_{\lambda_i(s)}$.
The imaginary part of Stieltjes transform contains full information of the spectrum. \eqref{e:dbound} and Lemma \ref{l:regpath} implies the following statements in terms of averaged density of eigenvalues and eigenvectors of $H_s$:
%Recall the inverse Stieltjest transform, $\rho_{{\rm fc},s}(E)=\lim_{\eta\rightarrow 0} \Im[m_{{\rm fc},s}(E+\ri \eta)]/\pi$. By Proposition \ref{p:mfc}, the densities $\rho_{{\rm fc},s}(E)$ and $N^{-1}\sum \delta_{\lambda_i(s)}$ are lower and upper bounded on $I_\kappa^r(E_0)$, on the scale $\eta\geq \psi^4/N$. Thus, t
there exists some universal constant $C$ such that for any $t_0\leq s\leq t$, and interval $I$ centered in $I_{\kappa}^r(E_0)$, with $|I|\geq \psi^4/N$, we have
%Similar to \eqref{eqn:stoptime}, thanks to Lemma \ref{l:regpath}, for any $t_0\leq s\leq t$ and interval $I$ centered in $I_\kappa^r(E_0)$ with $|I|\geq \psi^4/N$, we have
  \begin{align}\label{e:avdeleig}
 C^{-1}|I|N\leq   \#\{i: \gamma_i(s)\in I\}, \#\{i: \lambda_i(s)\in I\} \leq C|I|N.
  \end{align}
especially, $C^{-1}rN\leq d\leq CrN$; and with overwhelming probability
\begin{align}\label{e:avdelev}
C^{-1}|I|\leq \sum_{i:\lambda_i(s)\in I}\langle \bmq, \bmu_i(s)\rangle^2\leq C|I|.
\end{align}
\item Since $\ell/N\ll r$, for any index $i\in \qq{b_1-d-3\psi\ell, b_2+d+3\psi\ell}$, we have $\gamma_i(t_0)\in I_{5\kappa/4}^r(E_0)$. Therefore, for any $t_0\leq s\leq t$, $|\lambda_i(s)-\gamma_i(t_0)|\leq |\lambda_i(s)-\gamma_i(s)|
+|\gamma_i(s)-\gamma_i(t_0)|\leq \psi/N+C\log N(s-t_0)\ll r$, and $\lambda_i(s)\in I_\kappa^r(E_0)$, where we used \eqref{e:dergamma}. Moreover, the eigenvector $\bmu_i(s)$ is localized in the direction $\bmq$ with high probability,
\begin{align}\label{e:deleig}
N\langle q, \bmu_i(s)\rangle^2\leq \psi^4 \Im [\langle \bmq, G(s,(\lambda_i(s)+\i \psi^4/N))\bmq\rangle]\lesssim \psi^4.
\end{align}
\end{enumerate}
%
%\begin{remark}\label{r:clocation}
%From Proposition \ref{p:mfc}, for any $t\geq t_0$, the speed of classical locations of eigenvalues as defined in \eqref{eqn:eigloc} are bounded
%\begin{align*}
%\left|\del_t \gamma_{i}(t)\right|=|\Re[\mfct(\gamma_i(t))]|\lesssim \log N,
%\end{align*}
%for any $\gamma_{i}(t)\in I_\kappa^r(E_0)$. Therefore for any index $i\in [[b_1-d-3\psi\ell, b_2+d+3\psi\ell]]$, and any trajectory $(\bm\la(t))_{0\leq t\leq r}\in A$, we will have $\lambda_i(t)\in I_\kappa^r(E_0)$ for $t_0\leq t\ll r$.
%\end{remark}

We define the following flattening and averaging operators on the space of functions of configurations with $n$ points: %any $a\in\llbracket 1,N/2\rrbracket$,
\begin{align}
&({\rm Flat}_a (f))(\boeta)=\left\{
\begin{array}{cc}
f(\boeta), & \ {\rm if }\ \boeta\subset\llbracket b_1-a,   b_2+a\rrbracket,\\
1, &\ {\rm otherwise},
\end{array}
\right.
&{\rm Av}(f)=\frac{1}{d}\sum_{   a\in\llbracket  1 ,  d \rrbracket}{\rm Flat}_a(f).
\end{align}
%\cob [For local version, we need to change it to average over $ | a - N \gamma(E_0)| \le N^{\f b} $.
%{\cor From our choice of $r$ and $t$, we have that $d\gtrsim rN\gg N(t-t_0)$.  }
We can write 
\begin{equation}\label{eqn:ak}
{\rm Av}(f)(\boeta)=a_{\boeta} f(\boeta)+(1-a_{\boeta})
\end{equation}
for some coefficient $a_{\boeta}\in[0,1]$ ($a_{\boeta}=0$ if $\boeta\not\subset \llbracket b_1-d,b_2+d \rrbracket  $, $\; a_{\boeta} =1$ if $
\boeta\subset\llbracket  b_1,  b_2\rrbracket $). We will only use the elementary property
\begin{equation}\label{eqn:propa}
|a_{\boeta}-a_{{\boxi}}|\lesssim d(\boeta,\boxi)/d,
\end{equation}
where the distance is defined in \eqref{dis}.

For a general number of particles $n$, consider now the following modification of the eigenvector moment flow (\ref{ve}).
We only keep the short-range dynamics (depending on the short range parameter $\ell$) and modify the initial condition to be flat when there is a particle outside the interval we are interested, i.e. $[E_0-r, E_0+r]$:
\begin{align}\begin{split}\label{eqn:modMom}
&\partial_t g_{t} =  \mathscr{S}(t) g_{t},\\
&g_{t_0}(\boeta)=({\rm Av}f_{t_0})(\boeta),\end{split}
\end{align}
for $n=1$,  we write these functions as  $f_t(k)$ and $g_t(k)$ when $\boeta$ is the configuration with $1$ particle at $k$.  We remind the reader that $f_t(\boeta)$ can be  define either by  \eqref{feq} or by the solution of 
the equation \eqref{ve}. 

Before we prove our main results, we still need the following lemma on the $L^{\infty}$ control on the difference of the full operator ${\rm U}_{\mathscr B}$ and the short-range operator ${\rm U}_{\mathscr L}$:

\begin{lemma}\label{l:Lbound}
For any eigenvalue trajectory $\bm \lambda\in A$ as defined in Lemma \ref{l:regpath}, we define the eigenvector moment flow as in Theorem \ref{t:emf}. we have the following $L^{\infty}$ control on the difference of the full operator ${\rm U}_{\mathscr B}$ and the short-range operator ${\rm U}_{\mathscr L}$:
\begin{align}\label{Lbound}
\left|\left(\rU_{\mathscr{B}}(t_0,t)f_{t_0}-\rU_{\mathscr{S}}(t_0,t)f_{t_0}\right)(\boxi)\right|\lesssim_n \psi^{4n}N(t-t_0)/\ell
\end{align}
where $\boxi$ is any $n$-particle configuration supported on $\qq{b_1-d-2\psi\ell,b_2+d+2\psi\ell}$.
\end{lemma}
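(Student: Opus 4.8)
The plan is to write the difference of the two semigroups via Duhamel's formula and then control the long-range generator $\mathscr L$ applied to $\rU_{\mathscr S}(t_0,s)f_{t_0}$, using the finite speed of propagation estimate \eqref{finitespeed} from Lemma \ref{l:regpath}. Concretely, since $\mathscr B=\mathscr S+\mathscr L$, we have
\begin{align*}
\rU_{\mathscr B}(t_0,t)f_{t_0}-\rU_{\mathscr S}(t_0,t)f_{t_0}=\int_{t_0}^t \rU_{\mathscr B}(s,t)\,\mathscr L(s)\,\rU_{\mathscr S}(t_0,s)f_{t_0}\,\rd s.
\end{align*}
Because $\rU_{\mathscr B}(s,t)$ is a contraction on $L^\infty$ (it is the semigroup of a jump process with no killing, preserving constants and positivity), it suffices to bound $\|\mathscr L(s)\,\rU_{\mathscr S}(t_0,s)f_{t_0}\|_\infty$ uniformly in $s\in[t_0,t]$, evaluated at configurations $\boxi$ that stay inside (a slight enlargement of) $\qq{b_1-d-2\psi\ell,b_2+d+2\psi\ell}$; the point is that $\mathscr L(s)$ only moves one particle at a time, so one application cannot take $\boxi$ too far.

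First I would record the a priori size of $\rU_{\mathscr S}(t_0,s)f_{t_0}$. The initial data $f_{t_0}(\boeta)=\bE^{H_0}[\rQ_t(t_0)\mid\bla]$ is, by the isotropic local law input encoded in \eqref{evcontrol} together with the delocalization bound \eqref{e:deleig}, of size $\lesssim_n\psi^{4n}$ on configurations supported in the relevant bulk window (each factor $N|\langle\bmq,\bmu_k(t_0)\rangle|^2\lesssim\psi^4$), and $\rU_{\mathscr S}$ is an $L^\infty$ contraction, so $\|\rU_{\mathscr S}(t_0,s)f_{t_0}\|_\infty\lesssim_n\psi^{4n}$. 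Next, splitting $\mathscr L(s)g(\boeta)=\sum_{|j-k|>\ell}c_{jk}(s)2\eta_j(1+2\eta_k)(g(\boeta^{jk})-g(\boeta))$, I estimate the coefficients: by rigidity \eqref{e:avdeleig}, the sum $\sum_{|j-k|>\ell}c_{jk}(s)=\frac1N\sum_{|j-k|>\ell}(\lambda_j-\lambda_k)^{-2}$ over indices near the bulk is $\lesssim N/\ell$ (the $\ell$-th nearest neighbour gap is $\gtrsim\ell/N$, and a dyadic summation as in \eqref{e:dyadic} gives the $N/\ell$ bound). Hence $\|\mathscr L(s)\,\rU_{\mathscr S}(t_0,s)f_{t_0}\|_\infty\lesssim_n (N/\ell)\,\psi^{4n}$, and integrating over $[t_0,t]$ yields the claimed bound $\lesssim_n\psi^{4n}N(t-t_0)/\ell$.

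The step I expect to be the main obstacle is making the $L^\infty$ bound genuinely \emph{local}: the bound just sketched controls $\|\mathscr L(s)\rU_{\mathscr S}(t_0,s)f_{t_0}\|_\infty$ over \emph{all} configurations, but $f_{t_0}$ is only known to be $\lesssim_n\psi^{4n}$ on configurations whose particles lie in the bulk window where Assumption \ref{a:boundEv} and \eqref{e:deleig} apply — outside that window $f_{t_0}$ was set to $1$ by flattening only for $g_{t_0}$, not for $f_{t_0}$ itself, and the coefficients $c_{jk}$ degrade. This is exactly where \eqref{finitespeed} enters: for $\boxi$ supported in $\qq{b_1-d-2\psi\ell,b_2+d+2\psi\ell}$ and $s\in[t_0,t]$ with $t-t_0\leq\ell/N$, the contribution to $(\rU_{\mathscr B}(s,t)\mathscr L(s)\rU_{\mathscr S}(t_0,s)f_{t_0})(\boxi)$ coming from configurations $\boeta$ with $\tilde d(\boeta,\boxi)\geq\psi\ell$ is suppressed by $N^ne^{-c\psi}$ (after bounding the crude $L^\infty$ norm of everything by powers of $N$ via $\|f_{t_0}\|_\infty\lesssim 1/\eta\lesssim N$ and the trivial bound on the jump rates), which is negligible compared to $\psi^{4n}N(t-t_0)/\ell$. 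On the complementary event $\tilde d(\boeta,\boxi)<\psi\ell$ one has $\boeta\subset\qq{b_1-d-3\psi\ell,b_2+d+3\psi\ell}$, which by fact \eqref{e:deleig} is still inside $I_\kappa^r(E_0)$, so the $\psi^{4n}$ bound on $f_{t_0}$ and the $N/\ell$ bound on the $\mathscr L$-coefficients are both valid there. Assembling these two contributions gives \eqref{Lbound}.
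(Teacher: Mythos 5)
Your Duhamel expansion is formally correct, but the order of the semigroups is the wrong one for the tools available, and this breaks the key localization step. You put $\rU_{\mathscr B}(s,t)$ on the outside and $\rU_{\mathscr S}(t_0,s)$ on the inside, and then—since the contraction argument would otherwise require a \emph{global} $L^\infty$ bound on $\mathscr L(s)\rU_{\mathscr S}(t_0,s)f_{t_0}$, which fails outside the window where neither eigenvector delocalization nor eigenvalue rigidity is available—you invoke the finite speed of propagation \eqref{finitespeed} to discard the contribution of configurations at distance $\geq\psi\ell$ from $\boxi$. But \eqref{finitespeed} is an estimate for the short-range semigroup $\rU_{\mathscr S}$ only; the full semigroup $\rU_{\mathscr B}$ contains exactly the long-range jumps and has no exponential localization at scale $\psi\ell$: over a time $t-t_0\leq \ell/N$ the probability of a single long-range jump is of order $N(t-t_0)/\ell$, i.e.\ of the same order as the bound you are trying to prove, not $e^{-c\psi}$, and once such a jump occurs the integrand is only controlled by trivial bounds of size $N^{C n}$. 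The paper uses the opposite arrangement, $\int_{t_0}^t\rU_{\mathscr S}(s,t)\mathscr L(s)\rU_{\mathscr B}(t_0,s)f_{t_0}\,\rd s=\int_{t_0}^t\rU_{\mathscr S}(s,t)\mathscr L(s)f_s\,\rd s$, precisely so that the \emph{outer} operator is the one enjoying \eqref{finitespeed} (allowing $\mathscr L(s)f_s$ to be replaced by its flattened version supported near the window up to an error $e^{-c\psi/2}$, then using the $L^\infty$ contraction of $\rU_{\mathscr S}$), while the \emph{inner} object is the genuine eigenvector moment flow $f_s$.

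There is a second, related gap in your bound of $\mathscr L(s)$ applied to the inner function: even when the evaluation configuration $\boeta$ lies in the window, the long-range jump target $k$ in $g(\boeta^{jk})$ ranges over all of $\qq{1,N}$, in particular over indices whose eigenvalues lie outside $I^r_\kappa(E_0)$, where no pointwise bound $N\langle\bmq,\bmu_k(s)\rangle^2\lesssim\psi^4$ is available; so ``sup of the inner function times $\sum_{|j-k|>\ell}c_{jk}\lesssim N/\ell$'' does not suffice. The paper handles this by exploiting the conditional-expectation (product) structure of $f_s$, namely $f_s(\boeta^{jk})\lesssim\psi^{4n-4}f_s(k)$, and then bounding $\sum_{|k-i_p|\geq\ell} f_s(k)/(N(\lambda_{i_p}-\lambda_k)^2)\lesssim N/\ell$ via a dyadic decomposition together with the averaged delocalization bound \eqref{e:avdelev}. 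Your auxiliary function $\rU_{\mathscr S}(t_0,s)f_{t_0}$ has no such spectral representation, so even after correcting the semigroup order this extra ingredient is needed; the natural repair is to follow the paper's arrangement, in which the inner function is $f_s$ itself and both \eqref{e:deleig} and \eqref{e:avdelev} apply at time $s$.
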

\begin{proof}
By Duhamel's principle
\begin{align*}
\left|\left(\rU_{\mathscr{S}}(t_0,t)f_{t_0}-\rU_{\mathscr{B}}(t_0,t)f_{t_0}\right)(\boxi)\right|
=&\left|\int_{t_0}^t\rU_{\mathscr{S}}(s,t)\mathscr{L}(s)\rU_{\mathscr{B}}(t_0,s)f_{t_0}ds (\boxi)\right|\\
=&\left|\int_{t_0}^t\rU_{\mathscr{S}}(s,t)\mathscr{L}(s)f_{s}ds (\boxi)\right|
\end{align*}

For any $\boeta$ corresponds to the configuration $ \{(i_1,  j_1),  \dots, (i_m , j_m ) \}$, with support in $\qq{b_1-d+3\psi\ell, b_2+d+3\psi\ell}$, i.e., $i_1, i_2,\cdots, i_m\in\qq{b_1-d+3\psi\ell, b_2+d+3\psi\ell}$ . Then by \eqref{e:deleig}, with overwhelming probability, we have $\left(N\langle\bm q, \bmu_{i_p}(s) \rangle^{2}\right)^{j_p} \lesssim \psi^{2j_p}$ uniformly for any $1\leq p\leq m$, which leads to the following priori bound on the eigenvector moment flow:
\begin{align}\label{e:evestimate}
f_s(\boeta)\lesssim \psi^{4n}, \quad f_s(\boeta^{jk})\lesssim \psi^{4n-4}f_t(k).
\end{align}
We remark that $k\in\qq{1,N}$ can be any index. Since \eqref{e:deleig} is local, only holds for eigenvectors corresponding to  eigenvalues in the interval $I_\kappa^r(E_0)$, in general we do not have control on $N\langle \bmq, \bmu_k(s)\rangle^2$. However, it still follows from \eqref{e:evestimate},
\begin{align}
\notag\mathscr{L}(s)f_{s}(\boeta)\lesssim& \sum_{|j-k|\geq \ell}\frac{|f_{s}(\boeta^{jk})-f_{s}(\boeta)|}{N(\lambda_j-\lambda_k)^2}
%\leq \sum_{j:\boeta_j>0,\atop k:|j-k|\geq \ell}\frac{\psi^{4n-4} f_{s}(k)+\psi^{4n}}{N(\lambda_j-\lambda_k)^2}.\\
\leq \sum_{p=1}^m\sum_{k:|i_p-k|\geq \ell}\frac{\psi^{4n-4} f_{s}(k)+\psi^{4n}}{N(\lambda_{i_p}-\lambda_k)^2}.%\\
%\label{eqn:cuttail}\lesssim&\psi^{2n}\sum_{n=1}^{\lceil\log_2N/\ell\rceil}\sum_{2^{n-1}\ell\leq k<2^n\ell}\frac{(f_{s}(k)+1)}{N(\lambda_j-\lambda_k)^2}+\psi^{2n}\lesssim \frac{\psi^{2n}N}{\ell}
\end{align}
Notice that $i_p \in\qq{b_1-d+3\psi\ell, b_2+d+3\psi \ell}$, and thus $\lambda_{i_p}(s)\in I_\kappa^r(E_0)$. A similar dyadic decomposition as in \eqref{e:dyadic}, combining with \eqref{e:avdeleig} and \eqref{e:avdelev},  we have
\begin{align*}
\sum_{k:|i_p-k|\geq \ell}\frac{ f_{s}(k)}{N(\lambda_{i_p}-\lambda_k)^2}
&=\sum_{q=1}^{\lceil \log_2 N/\ell\rceil}\sum_{k: 2^{q-1}\ell\leq |k-i_p|\leq 2^q\ell}\frac{ f_{s}(k)}{N(\lambda_{i_p}-\lambda_k)^2}\\
&\lesssim\sum_{q=1}^{\lceil \log_2 N/\ell\rceil}\frac{N}{2^{2q}\ell^2}\sum_{k: 2^{q-1}\ell\leq |k-i_p|\leq 2^q\ell} f_{s}(k)\lesssim \frac{N}{\ell}.
\end{align*}
Similarly, we also have 
\begin{align*}
\sum_{k:|i_p-k|\geq \ell}\frac{ 1}{N(\lambda_{i_p}-\lambda_k)^2}\lesssim \frac{N}{\ell},
\end{align*}
and it follows
\begin{align}\label{eqn:cuttail}
\mathscr{L}(s)f_{s}(\boeta)\lesssim_m \frac{\psi^{4n}N}{\ell}
\end{align}
Notice that $\tilde d (\supp(\mathscr{L}(s)f_{s}-{\rm Flat}_{d+3\psi\ell}(\mathscr{L}(s)f_{s})), \boxi\}\geq \psi\ell$. Therefore by the finite speed of propagation \eqref{finitespeed} in Lemma \ref{l:regpath} of $\rU_{\mathscr{S}}$,  
we have
\begin{align*}
(\rU_{\mathscr{S}}(s,t)\mathscr{L}(s)f_{s}) (\boxi)
=\rU_{\mathscr{S}}(s,t){\rm Flat}_{d+3\psi\ell}(\mathscr{L}(s)f_{s}) (\boxi)+O(e^{-c\psi/2})\lesssim_n \psi^{4n}N/\ell.
\end{align*}
where in the last inequality, we used that $\rU_{\mathscr{S}}$ is a contraction in $L^{\infty}$. \eqref{Lbound} follows, since we gain a factor $t-t_0$ from integration of time.
\end{proof}

%\begin{theorem}\label{thm:maxPrincipleLoc}
%For any eigenvalue trajectory $(\bm\la(t))_{0\leq t\leq r}\in A$ defined in Lemma \ref{l:regpath}, let $f$ be a solution of the $\tilde{n}$ particle eigenvector moment flow (\ref{ve}) with initial matrix $H_0$ and path $(\bm\la(t))_{0\leq t\leq r}$.
%Then for any $t_0\ll t\ll r$, and large enough $N\geq N(\tilde{n})$ we have
%\beq\label{fest2}
%\sup_{\boeta:\mathcal{N}(\boeta)=\tilde{n},{\cor \boeta\subset\llbracket b_1+\psi\ell,b_2-\psi\ell\rrbracket}}\left|f_t(\boeta)-1\right|\lesssim_{\tilde{n}}\frac{1}{N^{\frak{d}}},
%\bEq
%where the constand $\frak{d}:=(\frak{b}-(6\tilde n+3)\frak{c})/4$.
%\end{theorem}

By Lemma \ref{l:regpath}, the event $A$ holds with overwhelming probability. Theorem \ref{t:normal} easily follows from the following Theorem.
\begin{theorem}\label{thm:maxPrincipleLoc}
Fix any $\eta_*\ll t\ll r$. For any eigenvalue trajectory $(\bm\la(s))_{0\leq s\leq t}\in A$ defined in Lemma \ref{l:regpath}, let $f$ be a solution of the $\tilde{n}$ particle eigenvector moment flow (\ref{ve}) with initial matrix $H_0$ and eigenvalue trajectories $(\bm\la(s))_{0\leq s\leq t}$.
Then for $N$ large enough we have
\beq\label{fest2}
\sup_{\boeta:\mathcal{N}(\boeta)=\tilde{n},\atop \boeta\subset\llbracket b_1+\psi\ell,b_2-\psi\ell\rrbracket}\left|f_t(\boeta)-1\right|\lesssim_{\tilde{n}}\frac{1}{N^{\frak{d}}},
\bEq
where the constant $\frak{d}>0$ depending on $\fa, \fb, r,t$. 
\end{theorem}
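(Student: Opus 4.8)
The plan is to run a maximum principle argument for the short-range eigenvector moment flow, following the strategy of \cite{BoYaQUE}, but exploiting only the local inputs provided by Lemma \ref{l:regpath}. Fix an eigenvalue trajectory in the good event $A$. The first step is to replace the full flow $f_t$ by the modified flow $g_t$ solving \eqref{eqn:modMom}, i.e. the flow generated by the short-range operator $\mathscr{S}(t)$ with flattened initial data $g_{t_0}=\mathrm{Av}(f_{t_0})$. There are two errors to control here: the difference between $\rU_{\mathscr{B}}(t_0,t)f_{t_0}$ and $\rU_{\mathscr{S}}(t_0,t)f_{t_0}$, which is $O_n(\psi^{4n}N(t-t_0)/\ell)$ by Lemma \ref{l:Lbound}, and the difference caused by flattening the initial condition, which by finite speed of propagation \eqref{finitespeed} affects $g_t(\boeta)$ only through configurations within $\tilde d$-distance $\psi\ell$ of $\boeta$; since $\boeta\subset\llbracket b_1+\psi\ell,b_2-\psi\ell\rrbracket$, the relevant configurations stay inside $\llbracket b_1,b_2\rrbracket$ where $\mathrm{Av}$ acts as the identity, so this error is also $O_n(N^n e^{-c\psi})$. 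Hence it suffices to prove $\sup_{\boeta\subset\llbracket b_1+\psi\ell,b_2-\psi\ell\rrbracket}|g_t(\boeta)-1|\lesssim_{\tilde n} N^{-\fd}$, choosing $\ell$ so that $N(t-t_0)/\ell$ and $\ell/N$ are both tiny powers of $N$ smaller than $t$ (e.g. $t_0=t/2$, $\ell=N^{1-c'}t$ for suitable small $c'$).

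The second and main step is the relaxation estimate for $g_t$. Here I would use the $\ell^\infty$–$\ell^2$ interplay: the short-range generator $\mathscr{S}(t)$ on the interval $I_\kappa^r(E_0)$ behaves, after the rigidity input \eqref{e:avdeleig}, like a random walk with jump rates $c_{jk}\asymp 1/(N(\lambda_j-\lambda_k)^2)$, which for $|j-k|\le\ell$ is bounded below by a constant multiple of $N/(j-k)^2$ up to rigidity fluctuations; the spectral gap of the restricted dynamics on a box of $\sim \psi\ell$ consecutive indices is then of order $N/(\psi\ell)^2 \cdot (\psi\ell)^{?}$... more precisely one obtains exponential $L^2$-decay toward the (locally constant) equilibrium at rate comparable to $N$, so that after time $t-t_0\gg 1/N$ (which holds since $t\gg\eta_*\gg 1/N$) the $L^2$ distance of $g_t$ to its average over the relevant configurations is exponentially small. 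Combined with the a priori $\ell^\infty$ bound $|g_s(\boeta)|\lesssim\psi^{4\tilde n}$ from \eqref{e:deleig}–\eqref{e:evestimate} (valid because all particles sit at indices with delocalized eigenvectors in the direction $\bmq$), a standard Nash-type / De Giorgi iteration or the ultracontractivity of the short-range semigroup upgrades the $L^2$ decay to an $L^\infty$ decay, yielding $|g_t(\boeta)-\langle g_t\rangle|\lesssim N^{-\fd}$ for $\boeta$ in the inner interval. Finally one checks that the conserved average $\langle g_t\rangle$ equals $1+o(1)$: this uses that $\bE[g_{t_0}(\boeta)]=\bE[f_{t_0}(\boeta)]$ up to flattening errors, together with $\bE[N\langle\bmq,\bmu_k(t_0)\rangle^2]=1+o(1)$ coming from the isotropic local law Theorem \ref{t:isolaw} (via the corollary $\langle\bmq,G(t_0,z)\bmq\rangle=\mfct(z)+O(N^{-\fb}+\psi^2/\sqrt{N\eta})$ and an analysis of the weighted spectral measure), so that $\bE[\mathrm{Q}_{t_0}]$ normalizes to $1+o(1)$.

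The third step is bookkeeping: assembling the error terms $\psi^{4n}N(t-t_0)/\ell$, $N^ne^{-c\psi}$, the relaxation error $e^{-cN(t-t_0)}$ (hence negligible since $N(t-t_0)\to\infty$), the ultracontractive-conversion loss (a power of $\psi/\sqrt{N\eta}$ at scale $\eta\sim \ell/N$, which is a small power of $N$), and the normalization error $N^{-\fb}+\psi^2/\sqrt{Nt}$ from the isotropic law at scale $\eta\sim t$. Choosing $\frak c$ (hence $\psi=N^{\frak c}$) small and then optimizing $\ell$ as a power of $N$ between $t$ and $Nt$, every term is bounded by $N^{-\fd}$ for some $\fd=\fd(\fa,\fb,r,t)>0$; raising to the polynomial $P$ and using the uniform $\ell^\infty$ bounds gives \eqref{fest2} and hence Theorem \ref{t:normal}.

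I expect the main obstacle to be the \emph{relaxation estimate} in the second step: establishing, with only the local lower/upper bound \eqref{e:avdeleig} on the density (no global rigidity), that the short-range dynamics $\mathscr{S}(t)$ contracts to its local equilibrium at rate $\gtrsim N$ in a way that survives the $L^2\to L^\infty$ upgrade. The delicate points are that the jump rates $c_{jk}$ are random and only controlled on the trajectory in $A$, that the dynamics is non-autonomous, and that the "equilibrium" one relaxes to is only locally constant (constant on configurations inside $\llbracket b_1,b_2\rrbracket$), so the comparison must be done against $\mathrm{Av}(f_{t_0})$ rather than a genuine stationary state; handling the boundary effects near $b_1,b_2$ cleanly is where the $\psi\ell$ buffer and the finite-speed-of-propagation corollary of Lemma \ref{l:fspeed} do the essential work.
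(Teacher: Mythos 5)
Your first step (replacing $f_t$ by the short-range flow $g_t$ with flattened initial data, using Lemma \ref{l:Lbound} and finite speed of propagation, and choosing $t-t_0\lesssim \ell/N$) matches the paper. The core of your second step, however, has a genuine gap. You propose to show that $g_t$ relaxes in $L^2$ to a (locally) constant equilibrium via a spectral-gap/ultracontractivity argument and then to identify that constant as $1$ from $\bE[N\langle\bmq,\bmu_k(t_0)\rangle^2]=1+o(1)$. For $\tilde n\geq 2$ this identification fails: the $\pi$-average of $g_{t_0}$ over a local window of configurations involves \emph{joint} moments $\bE[\prod_j(N\langle\bmq,\bmu_{i_j}(t_0)\rangle^2)^{k_j}\mid\bla]$, and the only input available at time $t_0$ is the isotropic law \eqref{evcontrol}, which controls a single weighted window average $\sum_k \eta\langle\bmq,\bmu_k\rangle^2/((\lambda_j-\lambda_k)^2+\eta^2)$ — a one-particle quantity. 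There is no a priori factorization of products into products of averages; pinning down the multi-particle constant is exactly the content of the theorem. Moreover, the relaxation claim itself is unjustified: the short-range generator on the window $\qq{b_1-d,b_2+d}$ (of size $\sim rN$ indices) does not equilibrate globally in time $t-t_0\sim\ell/N\ll r$; only local equilibration on scale $\lesssim N(t-t_0)$ occurs, so "exponentially small $L^2$ distance to the window average at rate $\sim N$" is false as stated, and the Nash/De Giorgi upgrade for this non-autonomous, long-range dynamics with random rates is heavy machinery that you neither supply nor reduce to the given inputs.

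The paper's mechanism is different and circumvents both problems: it runs a direct $L^\infty$ maximum principle on $g_s$ combined with an \emph{induction on the number of particles}. At the maximizing configuration $\tilde\boeta$ one applies $\mathscr{S}(s)$ and regularizes at scale $\eta$ with $\psi^4/N\leq\eta\leq\ell/N$; the diagonal part produces a damping term $-\frac{1}{\eta}(g_s(\tilde\boeta)-1)$ (using the eigenvalue density bound \eqref{e:avdeleig}), while the off-diagonal sum is rewritten, via the operators ${\rm Av}/{\rm Flat}$, finite speed of propagation, Lemma \ref{l:Lbound}, and the isotropic law \eqref{evcontrol}, as $a_{\tilde\boeta}\,\Im[m_{{\rm fc},s}(z_{j_p})]\,(f_s(\tilde\boeta\backslash j_p)-1)$ plus errors $O_n(\psi^{4n}(N^{-\fb}+\psi\ell/d+1/\sqrt{N\eta}+N\eta/\ell+N(s-t_0)/\ell))$; this is an $(n-1)$-particle observable, so Gronwall on the time ladder $t_k=t_0+k\psi\eta/\tilde n$ and induction on $n$ (together with a matching minimum principle) yield \eqref{fest2} with the explicit choices \eqref{e:choice} of $\eta,\ell,t_0$. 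So the hierarchical coupling through the generator — absent from your proposal — is precisely what identifies the constant $1$ at each particle level; without it, or without some independent control of joint eigenvector moments at time $t_0$, your route does not close.
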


\begin{proof} 
The proof is an induction on the number $\tilde n$ of particles.  
For any $\boeta$ such that $\mathcal{N}(\boeta)=n$ and $\boeta\subset\qq{ b_1+\psi\ell,b_2-\psi\ell}$, we have
\begin{align}
\notag|f_t(\boeta)-g_t(\boeta)|&\leq \left|\left(\rU_{\mathscr{B}}(t_0,t)f_{t_0}-\rU_{\mathscr{S}}(t_0,t)f_{t_0}\right)(\boeta)\right|
+\left|\rU_{\mathscr{S}}(t_0,t)(f_{t_0}-{\rm Av}f_{t_0})(\boeta)\right|\\
\label{eqn:diffdynamic}&\lesssim_n \psi^{4n}N(t-t_0)/\ell+e^{-c\psi/2},
\end{align}
where we bounded the first term by Lemma \ref{l:Lbound}, and the second term by finite speed of propagation \eqref{finitespeed}, since $f_{t_0}-{\rm{Av}}f_{t_0}$ vanishes for any $\boxi$ such that $\boxi\subset \qq{b_1, b_2}$. 

In the following we prove that $\sup_{\boeta}|\{g_t(\boeta)\}-1|\leq N^{-1}$ by a maximum principle argument.
For a given $t_0\leq s\leq t$, let $\tilde{\bm \eta}$, corresponding to the particle configuration $ \{(j_1,  k_1),  \dots, (j_m , k_m )\}$, be such that 
\begin{align*}
g_s(\tilde{ \bm\eta})=\sup_{\bm \eta: {\cal N}(\boeta)=n} \{g_s(\bm \eta)\}.
\end{align*} 
If $g_s(\tilde{\bm\eta})-1\leq N^{-1}$, there is nothing to prove. Otherwise, by finite speed propagation \eqref{finitespeed} in Lemma \ref{l:regpath}, the support of $\tilde{\bm\eta}$ belongs to the interval $\qq{b_1-d-\psi\ell, b_2+d+\psi\ell}$. By the defining relation \eqref{eqn:modMom},
\begin{align}
\notag&\quad \del_s \left(g_s(\tilde{\bm \eta})-1\right) =\mathscr{S}(s)g_s(\tilde{\bm\eta})=\sum_{0<|j-k|\leq \ell} c_{jk} 2\tilde \eta_j (1+2\tilde\eta_k) \left(g_s(\tilde\boeta^{jk})-g_s(\tilde\boeta)\right)\\
\notag\lesssim&\sum_{1\leq p\leq m,\atop k:0<|j_p-k|\leq \ell} \frac{g_s(\tilde \boeta^{j_pk})-g_s(\tilde\boeta)}{N(\lambda_{j_p}-\lambda_k)^2}
\leq \frac{1}{N}\sum_{1\leq p\leq m,\atop k:0<|j_p-k|\leq \ell} \frac{g_s(\tilde \boeta^{j_pk})-g_s(\tilde{\boeta})}{(\lambda_{j_p}-\lambda_k)^2+\eta^2}\\
=&-\frac{1}{N\eta}\left(g_s(\tilde{\bm \eta})-1\right)\sum_{1\leq p\leq m,\atop k:0<|{j_p}-k|\leq \ell}\Im \frac{1}{z_{j_p}-\lambda_k}
+\frac{1}{N\eta}\sum_{1\leq p\leq m,\atop k:0<|{j_p}-k|\leq \ell}\Im \frac{g_s(\tilde{\bm \eta}^{{j_p}k})}{z_{j_p}-\lambda_k}-\Im \frac{1}{z_{j_p}-\lambda_k}\label{eqn:tobound}
\end{align}
where we define  $z_{j_p}=\lambda_{j_p}+\i \eta$, and $\psi^4/N\leq \eta\leq\ell/N$, will be chosen later. For the first term in \eqref{eqn:tobound}
\begin{align*}
\sum_{1\leq p\leq m,\atop k:0<|{j_p}-k|\leq \ell}\Im \frac{1}{z_{j_p}-\lambda_k}
\geq \sum_{p=1}^m\sum_{k:0<|{j_p}-k|\leq \ell}\frac{\eta}{(\lambda_{j_p}-\lambda_k)^2+\eta^2}
\geq \sum_{p=1}^{m}\sum_{k: |\lambda_k-\lambda_{j_p}|\leq\eta} \frac{\eta}{2\eta^2}
\gtrsim N,
\end{align*}
where we used \eqref{e:avdeleig}.
For the second term in \eqref{eqn:tobound}, we claim that for any fixed ${j_p}$ such that $j_p\in\qq{b_1-d-\psi\ell, b_2+d+\psi\ell}$, 
\begin{align}
\begin{split}\label{eqn:keybound}
&\frac{1}{N}\sum_{k:0<|{j_p}-k|\leq \ell}\Im \frac{g_s(\tilde\boeta^{{j_p}k})}{z_{j_p}-\lambda_k}-\Im \frac{1}{z_{j_p}-\lambda_k}\\
=&
 a_{\tilde\boeta}\Im[m_{{\rm fc},s}(z_{j_p})]\left(f_s(\tilde{\boeta}\backslash {j_p})-1\right)+O_{n}\left(\psi^{4n}\left(\frac{1}{N^{\frak{b}}}+\frac{\psi\ell}{d}+\frac{1}{\sqrt{N\eta}}+\frac{N\eta}{\ell}+\frac{N(s-t_0)}{\ell}\right) \right).
\end{split}
\end{align}
We can bound the lefthand side of \eqref{eqn:keybound} 
by $\eqref{eqn:term1}+\eqref{eqn:term2}+\eqref{eqn:term3}$ where
\begin{align}
\label{eqn:term1}&\Im\sum_{k:0<|k-{j_p}|\leq \ell}\frac{1}{N}\frac{(\rU_{\mathscr{S}}(t_0,s){\rm Av}f_{t_0})(\tilde\boeta^{{j_p}k})-({\rm Av}\rU_{\mathscr{S}}(t_0,s)f_{t_0})(\tilde\boeta^{{j_p}k})}{z_{j_p}-\la_k},\\
\label{eqn:term2}&\Im\sum_{k:0<|{j_p}-k|\leq \ell}\frac{1}{N}\frac{({\rm Av}\rU_{\mathscr{S}}(t_0,s)f_{t_0})(\tilde\boeta^{{j_p}k})-({\rm Av}\rU_{\mathscr{B}}(t_0,s)f_{t_0})(\tilde\boeta^{{j_p}k})}{z_{j_p}-\la_k},\\
\label{eqn:term3}&\Im\sum_{k:0<|{j_p}-k|\leq \ell}\frac{1}{N}\frac{({\rm Av}\rU_{\mathscr{B}}(t_0,s)f_{t_0})(\tilde\boeta^{{j_p}k})}{z_{j_p}-\la_k}-\frac{1}{z_{j_p}-\lambda_k}.
\end{align}
The term \eqref{eqn:term1} will be controlled by finite speed of propagation;  \eqref{eqn:term2} will be controlled by  Lemma \ref{l:Lbound}, and \eqref{eqn:term3} by  the isotropic local semicircle law for $\cal N(\boeta)=1$, and by induction for $\cal N(\boeta)\geq 2$. 

To bound \eqref{eqn:term1}, we write   
\begin{align}\begin{split}\label{eqn:(i)}
&(\rU_{\mathscr{S}}(t_0,s){\rm Av}f_{t_0})(\tilde\boeta^{{j_p}k})-({\rm Av}\rU_{\mathscr{S}}(t_0,s)f_{t_0})(\tilde\boeta^{{j_p}k})\\
=&\frac{1}{d}\sum_{a\in\llbracket1,d\rrbracket}\left(\rU_{\mathscr{S}}(t_0,s){\rm Flat}_a f_{t_0}-{\rm Flat}_a\rU_{\mathscr{S}}(t_0,s) f_{t_0}\right)(\tilde \boeta^{{j_p}k}).
\end{split}
\end{align}
For any $a\in \qq{1,d}$, there are three cases: $\tilde\boeta^{{j_p}k}\not \subset\qq{b_1-a-\psi\ell, b_2+a+\psi\ell}$, $\tilde\boeta^{{j_p}k}\subset\qq{b_1-a+\psi\ell, b_2+a-\psi\ell}$, or neither of them.

For $\tilde\boeta^{{j_p}k}\not \subset\qq{b_1-a-\psi\ell, b_2+a+\psi\ell}$, by our defintion, ${\rm Flat}_a\rU_{\mathscr{S}}(t_0,s) f_{t_0}(\tilde \boeta^{{j_p}k})=1$. Notice that the support of ${\rm{Flat}}_a f_{t_0}-1$ is on $\qq{b_1-a,b_2+a}$. By finite speed of propagation \eqref{finitespeed} in Lemma \ref{l:regpath}, the total mass of 
$\rU_{\mathscr{S}}(t_0,s)({\rm Flat}_a f_{t_0}-1)$ outside $\qq{b_1-a-\psi\ell, b_2+a+\psi\ell}$ is exponentially small. Especially, $|\rU_{\mathscr{S}}(t_0,s){\rm Flat}_a f_{t_0}(\tilde\boeta^{{j_p}k})-1|\leq \exp(-c\psi/2)$.
For $\tilde\boeta^{{j_p}k}\subset\qq{b_1-a+\psi\ell, b_2+a-\psi\ell}$, we have
\begin{align*}
\left|\left(\rU_{\mathscr{S}}(t_0,s){\rm Flat}_a f_{t_0}-{\rm Flat}_a\rU_{\mathscr{S}}(t_0,s) f_{t_0}\right)(\tilde \boeta^{{j_p}k})\right|
=&\left|\left(\rU_{\mathscr{S}}(t_0,s){\rm Flat}_a f_{t_0}-\rU_{\mathscr{S}}(t_0,s) f_{t_0}\right)(\tilde \boeta^{{j_p}k})\right|\\
=&|\left(\rU_{\mathscr{S}}(t_0,s)\left(f_{t_0}-{\rm Flat}_a f_{t_0}\right)\right)(\tilde \boeta^{{j_p}k})|\leq \exp(-c\psi/2),
\end{align*}
we used the finite speed of propagation \eqref{finitespeed} in Lemma \ref{l:regpath} in the last inequality, since $f_{t_0}-{\rm Flat}_a f_{t_0}$ vanishes for any $\boxi$ with $\boxi\in\qq{b_1-a,b_2+a}$.
For the last case, we have $\tilde\boeta^{{j_p}k}\subset\qq{b_1-a-\psi\ell, b_2+a+\psi\ell}$, and some particle of $\tilde\boeta^{{j_p}k}$ is in $\qq{b_1-a-\psi\ell, b_1-a+\psi\ell}\cup \qq{b_2+a-\psi\ell, b_2+a+\psi\ell}$. There are at most $2n\psi \ell$ such $a$, where $n={\cal N}(\tilde\boeta)$ is the total number of particles. Moreover, since $\rU_{\mathscr{S}}$ is a contraction in $L^{\infty}$, we have
%\begin{align*}
%&\left|\left(\rU_{\mathscr{S}}(t_0,s){\rm Flat}_a f_{t_0}-{\rm Flat}_a\rU_{\mathscr{S}}(t_0,s) f_{t_0}\right)(\tilde \boeta^{{j_p},k})\right|\\
%\leq &\left|\left(\rU_{\mathscr{S}}(t_0,s){\rm Flat}_a f_{t_0}-{\rm Flat}_a\rU_{\mathscr{S}}(t_0,s){\rm Flat}_{a+\psi\ell} f_{t_0}\right)(\tilde \boeta^{{j_p},k})\right|+1\\
%\leq &\sup_{\boeta} {\rm Flat}_{a+\psi\ell}f_{t_0}(\boeta)+1\lesssim \psi^{2n},
%\end{align*}
%
\begin{align*}
&\left|\left(\rU_{\mathscr{S}}(t_0,s){\rm Flat}_a f_{t_0}-{\rm Flat}_a\rU_{\mathscr{S}}(t_0,s) f_{t_0}\right)(\tilde \boeta^{{j_p}k})\right|\\
\leq &\left|\left(\rU_{\mathscr{S}}(t_0,s){\rm Flat}_a f_{t_0}\right|+\left|{\rm Flat}_a\rU_{\mathscr{S}}(t_0,s){\rm Flat}_{a+2\psi\ell} f_{t_0}\right)(\tilde \boeta^{{j_p},k})\right|+\left|{\rm Flat}_a\rU_{\mathscr{S}}(t_0,s)\left(f_{t_0}-{\rm Flat}_{a+2\psi\ell} f_{t_0}\right)(\tilde \boeta^{{j_p}k})\right|\\
\leq &\|{\rm Flat}_a f_{t_0}\|_{\infty}+\|{\rm Flat}_{a+\psi\ell} f_{t_0}\|_{\infty}+e^{-c\psi/2}.
\end{align*}
Since by \eqref{e:deleig}, uniformly for any $i\in \qq{b_1-a-2\psi \ell, b_1+a+2\psi\ell}$, the eigenvector $\bmu_i(t_0)$ is delocalized in the direction $\bmq$, i.e. $N\langle\bmq, \bmu_i(t_0) \rangle^2 \leq \psi^4$ with overwhelming probability. Thus $\|{\rm Flat}_a f_{t_0}\|_{\infty}, \|{\rm Flat}_{a+2\psi\ell} f_{t_0}\|_{\infty}\lesssim \psi^{4n}$, and
\begin{align*}
\left|\left(\rU_{\mathscr{S}}(t_0,s){\rm Flat}_a f_{t_0}-{\rm Flat}_a\rU_{\mathscr{S}}(t_0,s) f_{t_0}\right)(\tilde \boeta^{{j_p}k})\right|\lesssim \psi^{4n}.
\end{align*}
Combining the above three cases together, it follows that \eqref{eqn:(i)} and therefore $\eqref{eqn:term1}\lesssim_n \psi^{4n+1}\ell/d$.

To bound the term \eqref{eqn:term2}, Since $\tilde{\bm\eta}$ is supported on the interval $\qq{b_1-d-\psi\ell, b_2+d+\psi\ell}$, $\tilde\boeta^{j_pk}$ is supported on $\qq{b_1-d-2\psi\ell, b_2+d+2\psi\ell}$ for any $k$ such that $|k-j_p|\leq \ell$. Therefore, by Lemma \ref{l:Lbound} we have
\begin{align*}
\left|({\rm Av}\rU_{\mathscr{S}}(t_0,s)f_{t_0})(\tilde\boeta^{{j_p}k})-({\rm Av}\rU_{\mathscr{B}}(t_0,s)f_{t_0})(\tilde\boeta^{{j_p}k})\right|
&\leq \left|(\rU_{\mathscr{S}}(t_0,s)f_{t_0}-\rU_{\mathscr{B}}(t_0,s)f_{t_0})(\tilde\boeta^{{j_p}k})\right|\\
&\lesssim_n \psi^{4n}N(s-t_0)/\ell.
\end{align*}
As a consequence, \eqref{eqn:term2}$\lesssim_n \psi^{4n}N(s-t_0)/\ell$.

Finally for \eqref{eqn:term3}, similarly $\tilde\boeta^{j_pk}$ is supported on $\qq{b_1-d-2\psi\ell, b_2+d+2\psi\ell}$,
%Since $\tilde\boeta$ is supported on $\qq{b_1-d-\psi\ell, b_2+d+\psi\ell}$, 
then by \eqref{e:deleig} uniformly for any $j$ in the support of $\tilde\boeta^{j_pk}$, $N\langle\bmq,\bmu_{j}(s)\rangle^{2} \lesssim \psi^{4}$  with overwhelming  probability. Therefore, $f_s(\tilde\boeta^{j_pk})\lesssim \psi^{4n}$, for any $1\leq p\leq m$. The first part  of \eqref{eqn:term3} is
\begin{align*}
&\frac{1}{N}\Im\sum_{k:0<|{j_p}-k|\leq \ell}\frac{({\rm Av}f_{s})(\tilde\boeta^{{j_p}k})}{z_{j_p}-\la_k}
=\frac{1}{N}\Im\sum_{k:0<|{j_p}-k|\leq \ell}\frac{a_{\tilde\boeta^{{j_p}k}}f_{s}(\tilde\boeta^{{j_p}k})+(1-a_{\tilde\boeta^{{j_p}k}})}{z_{j_p}-\la_k}
\\
=&\frac{1}{N}\Im\sum_{k:0<|{j_p}-k|\leq \ell}\frac{a_{\tilde\boeta}f_{s}(\tilde\boeta^{{j_p}k})+(1-a_{\tilde\boeta})+(a_{\tilde\boeta^{{j_p}k}}-a_{\tilde{\boeta}})f_{s}(\tilde\boeta^{{j_p}k})+(a_{\tilde{\boeta}}-a_{\tilde\boeta^{{j_p}k}})}{z_{j_p}-\la_k}\\
=&\frac{1}{N}\Im\sum_{k:0<|{j_p}-k|\leq \ell}\frac{a_{\tilde\boeta}f_{s}(\tilde\boeta^{{j_p}k})+(1-a_{\tilde\boeta})}{z_{j_p}-\la_k}+O\left(\frac{\ell\psi^{4n}}{d}\right),
\end{align*}
where we used that $|a_{\tilde\boeta^{{j_p}k}}-a_{\tilde{\boeta}}|\leq d(\tilde\boeta, \tilde\boeta^{{j_p}k})/d\leq \ell/d$.

From the proof of Lemma \ref{l:Lbound}, and by our choice $\eta\leq \ell/N$, we have 
\begin{align*}
\frac{1}{N}\sum_{k: |{j_p}-k|> \ell}\frac{\eta f_s(\tilde \boeta^{{j_p}k})}{(\la_{{j_p}}-\la_k)^2+\eta^2}\leq \frac{\psi^{4n}N\eta}{\ell},\quad \frac{1}{N}\sum_{k: |{j_p}-k|> \ell}\frac{\eta }{(\la_{{j_p}}-\la_k)^2+\eta^2}\leq \frac{N\eta}{\ell}.
\end{align*}
\eqref{eqn:term3} can be reduced to upper bound the following expression,
\begin{align}\label{e:expdiff1}
\notag&\frac{1}{N}\sum_{k: 0<|{j_p}-k|\leq \ell}\frac{\eta f_s(\tilde \boeta^{{j_p}k})}{(\la_{{j_p}}-\la_k)^2+\eta^2}-\frac{\eta}{(\lambda_{j_p}-\lambda_k)^2+\eta^2}\\
=&\frac{1}{N}\sum_{k\notin\{j_1,\cdots, j_m\}}\frac{\eta f_s(\tilde \boeta^{{j_p}k})}{(\la_{{j_p}}-\la_k)^2+\eta^2}-\Im m_{{\rm fc},s}(z_{j_p})+O\left(\frac{n\psi^{4n}}{N\eta}+\frac{\psi^{4n}N\eta}{\ell}\right).
\end{align}
Moreover, by definition \eqref{feq} the first sum in the above expression is 
\begin{align}\label{e:expdiff2}
&\bE\left(
\left(
\prod_{1\leq q\leq m,q\neq p}\frac{(N\langle \bmq,\bmu_{j_q}(s)\rangle^2)^{k_q-\delta_{pq}}}{a(2(k_{q}-\delta_{pq}))}\right)
\left(
\sum_{k\not\in\{j_1,\dots,j_m\}}\frac{\eta \langle\bmq,\bmu_{k}(s)\rangle^2}{(\la_{j_p}-\la_{k})^2+\eta^2}
\right)
\, \Big | \, (H_0,\bla)
\right).
%\lesssim&
%\left(\Im m_t(z_{j_i})+\frac{1}{N^{\frak b}}+\frac{\psi}{\sqrt{N\eta}}+\frac{n\psi^{2}}{N\eta}\right)\bE\left(
%\left(
%\frac{(N\langle \bmq,\bmu_{j_r}\rangle^2)^{k_r-1}}{a(2k_{r}-2)}
%\prod_{1\leq r\leq m,r\neq r}\frac{(N\langle \bmq,\bmu_{j_r}\rangle^2)^{k_r}}{a(2k_{r})}\right)
%\, \Big | \,  (H_0,\bla)
%\right)
\end{align}
Thanks to \eqref{evcontrol}, we have
\begin{align*}
\sum_{k\not\in\{j_1,\dots,j_m\}}\frac{\eta \langle\bmq,\bmu_{k}\rangle^2}{(\la_{j_p}-\la_{k})^2+\eta^2}
=&\Im[\langle \bmq, G(s,z_{j_p})\bmq\rangle]-\frac{1}{N}\sum_{q=1}^{m}\frac{\eta N\langle\bmq,\bmu_{j_m}\rangle^2}{(\la_{j_p}-\la_{j_q})^2+\eta^2}\\
=&\Im[m_{{\rm fc},s}(z_{j_p})]+O_{n}\left(\frac{1}{N^{\fb}}+\frac{\psi^2}{\sqrt{N\eta}}+\frac{\psi^{4}}{N\eta}\right),
\end{align*}
with overwhelming probability. As a result, \eqref{e:expdiff2} is bounded by
\begin{align}\label{e:expdiff3}
\eqref{e:expdiff2}= \Im [m_{{\rm fc},s}(z_{j_p})]f_s(\tilde{\boeta}\backslash {j_p})+O_{n}\left(\frac{\psi^{4n}}{N^{\fb}}+\frac{\psi^{4n}}{\sqrt{N\eta}}\right).
\end{align}
Combining \eqref{e:expdiff3} and \eqref{e:expdiff1}, we have the following estimate for \eqref{eqn:term3},
\begin{align*}
\eqref{eqn:term3}=a_{\tilde\boeta} \Im [m_{{\rm fc},s}(z_{j_p})]\left(f_s(\tilde{\boeta}\backslash {j_p})-1\right)+O_{n}\left(\psi^{4n}\left(\frac{1}{N^{\frak{b}}}+\frac{\ell}{d}+\frac{1}{\sqrt{N\eta}}\right)\right).
\end{align*}
\eqref{eqn:keybound} follows from combining the error estimate of \eqref{eqn:term1}, \eqref{eqn:term2} and \eqref{eqn:term3}.

With the estimate \eqref{eqn:keybound}, we can start proving \eqref{fest2} by induction. We choose the parameters 
\begin{align}\label{e:choice}
\eta=\psi^{8\tilde n}N^{2\frak{d}-1},\quad \ell=\psi^{12\tilde n+1}N^{3\frak{d}}, \quad t_0=t- \psi\eta=t-\psi^{8\tilde n+1}N^{2\frak{d}-1}.
\end{align}
We can take $\frak{c}$ (as in the control parameter $\psi$ \eqref{control}) and $\fd$ small enough such that $\fd+4\tilde n \fc\leq \fb$, $4\fd+(16\tilde n +2)\fc-1\leq \log_N r$ and $2\fd+(8\tilde n+1)\fc-1\leq \log_N(t/2)$, then 
\begin{align*}
&\eta^*\ll t_0\leq t\ll r, \quad t-t_0\leq \frac{\ell}{N}, \quad \frac{\psi^4}{N}\leq \eta\leq \frac{\ell}{N}, \quad t_0+\frac{\ell}{N}\ll r,
\end{align*}
and for any $0\leq n\leq \tilde n$, it holds
\begin{align*}
&\psi^{4n}\left(\frac{1}{N^{\frak{b}}}+\frac{\psi\ell}{d}+\frac{1}{\sqrt{N\eta}}+\frac{N\eta}{\ell}\right) \leq \frac{4}{N^{\fd}},\quad \frac{\psi^{4n}N(t-t_0)}{\ell}\leq \frac{1}{N^{\fd}}.
\end{align*}
Thus \eqref{eqn:keybound} can be simplified as: for any $1\leq n\leq \tilde{n}$, and $t_0\leq s\leq t$, we have 
\begin{align*}
\sum_{k:0<|{j_p}-k|\leq \ell}\Im \frac{g_s(\tilde\boeta^{{j_p}k})}{z_{j_p}-\lambda_k}-\Im \frac{1}{z_{j_p}-\lambda_k}
\lesssim_n
\sup_{\boeta:\mathcal{N}(\boeta)=n-1,\atop 
 \boeta\subset\llbracket b_1+\psi\ell,b_2-\psi\ell\rrbracket}\left|f_s({\boeta})-1\right|+\frac{1}{N^{\frak{d}}} .
\end{align*}
If we plug in this back to \eqref{eqn:tobound}, we have either $g_s(\tilde \boeta)-1\leq N^{-1}$ or
\begin{align}
\label{eqn:diffbound}\del_s \left(g_s(\tilde{\bm \eta})-1\right) 
\lesssim_n&-\frac{1}{\eta}\left(g_s(\tilde{\bm \eta})-1\right)
+\frac{1}{\eta}\left(n\sup_{\boeta:\mathcal{N}(\boeta)=n-1,\atop \boeta\subset\llbracket b_1+\psi\ell,b_2-\psi\ell\rrbracket}\left|f_s(\boeta)-1\right|+\frac{1}{N^{\frak{d}}} \right).
\end{align}

We can prove the following by induction on $n$: let $t_k=t_0+k\psi\eta/\tilde n$ for $k=0,1,2,\cdots, \tilde n$. Then for any time $t_n\leq s\leq t$ we have
\begin{align}\label{ind}
\sup_{\boeta:\mathcal{N}(\boeta)=n,\atop \boeta\subset\llbracket b_1+\psi\ell,b_2-\psi\ell\rrbracket}\left|f_s(\boeta)-1\right|\lesssim_n \frac{1}{N^{\frak{d}}}.
\end{align}
\eqref{ind} holds trivially for $n=0$. Assume \eqref{eqn:diffbound} holds for $n-1$, we prove it for $n$. By induction , for any $t_{n-1}\leq s\leq t$ we have
\begin{align*}
\del_s \left(g_s(\tilde{\bm \eta})-1\right) 
\lesssim_n -\frac{1}{\eta}\left(g_s(\tilde{\bm \eta})-1\right)
+\frac{1}{N^{\frak{d}}\eta}.
\end{align*}
Therefore for any $t_{n}\leq s\leq t$, Gronwall's inequality leads to 
\begin{align*}
\sup_{\boeta:\cal N(\boeta)=n}g_s(\bm \eta)-1\lesssim_n \frac{1}{N^{\frak{d}}} +\left(\sup_{\boeta:\cal N(\boeta)=n}g_{t_{n-1}}(\boeta)-1\right)e^{-\psi}\lesssim_n \frac{1}{N^{\frak{d}}},
\end{align*}
for $N$ large enough. Combining with \eqref{eqn:diffdynamic}, we obtain,
\begin{align*}
\sup_{\boeta:\cal N(\boeta)=n,\atop
\boeta\in\qq{b_1+\psi\ell,b_2-\psi\ell}}f_s(\boeta)-1\lesssim_n \frac{1}{N^{\frak{d}}}.
\end{align*}
Similarly by a minimum principle argument, one can show that 
\begin{align*}
\inf_{\boeta:\cal N(\boeta)=n,\atop
\boeta\in\qq{b_1+\psi\ell,b_2-\psi\ell}}f_s(\boeta)-1\gtrsim_n -\frac{1}{N^{\frak{d}}},
\end{align*}
and \eqref{ind} follows for any $0\leq n\leq \tilde n$.
\end{proof}

\begin{proof}[Proof of Corollary \ref{c:que}]
By taking $\bmq$ supported on $i$ and $j$-th coordinates in Theorem \ref{t:normal}, we know that for any $k$ such that $\lambda_k(t)\in I_{2\kappa}^r(E_0)$, $u_{ki}^2(t)$ and $u_{kj}^2(t)$ are jointly asymptotically normal.
A second moment calculation yields
\begin{align*}
&\bE\left[\left(\frac{N}{\|\bf a\|_1}\sum_{i=1}^N a_i u_{ki}^2(t)\right)^2\right]=
\frac{1}{\|{\bf a}\|^2_1}\bE\left[\left(\sum_{i=1}^N a_i(N u_{ki}^2(t)-1)\right)^2\right]\\
\leq& \max_{i\neq j}\left|\bE\left[\left(N u_{ki}^2(t)-1\right)\left(N u_{kj}^2(t)-1\right)\right]\right|
+\frac{1}{\|{\bf a}\|_1}\max_i\bE\left[\left(N u_{ki}^2(t)-1\right)^2\right].
\end{align*}
By Theorem \ref{t:normal}, the first term of the right hand side  is bounded by  $C N^{-\frak{d}}$, 
and the second term is bounded by $C/\|{\bf a}\|_1$, where $C$ is an universal constant.
The Markov inequality then allows us to conclude the proof of  \eqref{localque}. 
\end{proof}

\section{Proof of Theorem \ref{t:evsparse}}\label{s:app}

For the proof of Theorem \ref{t:evsparse}, we follow the three-step strategy as in \cite{MR3429490, Univregular}, where it was proved for sparse Erd\H{o}s-R\'{e}nyi graphs in the regime $N^{\delta}\leq p\leq N/2$ in \cite{MR3429490}, and $p$-regular graphs in the regime $N^\delta\leq p\leq N^{2/3-\delta}$ in \cite{Univregular},  that in the bulk of the spectrum the local eigenvalue correlation functions and the distribution of the gaps between consecutive eigenvalues coincide with those of the Gaussian orthogonal ensemble. We prove Theorem \ref{t:evsparse} for  Erd\H{o}s-R\'{e}nyi graphs, the proof for $p$-regular graphs is similar, and we only remark the differences.

Before the proof of Theorem \ref{t:evsparse}, we recall some definitions and notations from \cite{MR3429490}. 

\begin{definition}\label{general}
Let $A$ be an $N\times N$ deterministic real symmetric matrix. We denote the eigenvalues of $A$ as $\lambda_1\leq \lambda_2\leq\cdots\leq \lambda_N$ and corresponding eigenvectors $\bmu_1,\bmu_2,\cdots, \bmu_N$. For any small parameter $\fc>0$ and unit vector $\bmq\in \bR^N$, we call the matrix $A$ $(\fc,\bmq)$-general, if there exists an universal constant $C$ such that 
\begin{enumerate}
\item The eigenvectors of $A$ are delocalized in all base directions and direction $\bmq$: for all $i,j\in \qq{N}$, $\langle \bme_j, \bmu_i\rangle^2, \langle \bmq, \bmu_i\rangle^2\leq CN^{-1+\fc}$.
\item The eigenvalues of $A$ do not accumulate:  there is an universal constant $C$, such that for any interval $I$ with length $|I|\geq N^{-1+\fc}$, we have $\#\{i: \lambda_i\in I\}\leq C|I|N$. 
\end{enumerate} 
\end{definition}

We recall the quantity $Q_i$ on the space of symmetric $N\times N$ real matrices. For any $N\times N$ matrix $A$, if $\lambda_i$ is a single eigenvalue of $A$, $Q_i(A)$ is given by 
\begin{align}\label{Q}
Q_i(A)=\frac{1}{N^2}\sum_{j:j\neq i}\frac{1}{|\lambda_j-\lambda_i|^2}.
\end{align}
This quantity plays an important role in \cite{MR2784665, MR2669449}, where it was observed that $Q_i(A)$ captures quantitatively the derivatives of the eigenvalues $\lambda_i$ of $A$. 

\begin{proposition}\label{bdrQ}
Let $A$ be an $N\times N$ deterministic real symmetric matrix. If $A$ is $(\fc, \bmq)$-general in the sense of Definition \ref{general} and 
\begin{align}
\label{gapb}Q_i(A)=\frac{1}{N^2}\sum_{j:j\neq i}\frac{1}{|\lambda_{i}(A)-\lambda_{j}(A)|^2}\leq M=N^{2\tau},
\end{align} then there exists some universal constant $C$ such that
\begin{align}
\label{drei}&|\del_{ab}^{(k)}\lambda_i(A)|\leq CN^{-1+(k-1)\tau+(2k-1)\fc}, \quad k=1,2,3,\\
\label{drQ}&|\del_{ab}^{(k)}Q_i(A)|\leq CN^{(k+2)\tau+(2k+2)\fc},\quad k=1,2,3,\\
\label{drev}&|\del_{ab}^{(k)}\langle\bmq, \bmu_i(A) \rangle^2|\leq CN^{-1+k\tau+(2k+1)\fc},\quad k=1,2,3,
\end{align}
where $\del_{ab}$ is the derivative with respective to $(a,b)$-th entry of $A$.
\end{proposition}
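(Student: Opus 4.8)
This is a perturbation-theory power count at the fixed matrix $A$; no probability or dynamics enters. Fix $i$, write $V=E_{ab}+E_{ba}$ ($a\neq b$) or $V=E_{aa}$ ($a=b$) so that $\del_{ab}$ is the directional derivative at $0$ along $A\mapsto A+sV$, and abbreviate $S_m:=\sum_{j\neq i}|\lambda_i-\lambda_j|^{-m}$. \emph{Step 1 (a priori estimates from the hypotheses).} From $Q_i(A)\leq M=N^{2\tau}$ one gets the uniform gap bound $|\lambda_i-\lambda_j|\geq N^{-1-\tau}$ for $j\neq i$, so $\lambda_i$ is simple and $\lambda_i(\cdot)$, $P_i(\cdot):=\bmu_i(\cdot)\bmu_i(\cdot)^{*}$ are real-analytic near $A$; we may thus use the Rayleigh--Schr\"odinger formulas, equivalently differentiate $\lambda_i$ and $P_i=\frac1{2\pi\ri}\oint(z-A)^{-1}\rd z$ directly. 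Also $S_2=N^2Q_i(A)\leq N^{2+2\tau}$; together with the gap bound this gives $S_m\leq N^{(m-2)(1+\tau)}S_2\leq N^{m(1+\tau)}$ for all $m\geq2$; and a dyadic decomposition around $\lambda_i$ using the non-accumulation hypothesis of Definition \ref{general} (on intervals of length $\geq N^{-1+\fc}$) together with $S_2$ gives $S_1\leq CN^{1+\tau+\fc}\log N$. Finally delocalization gives $|\langle\bme_c,\bmu_j\rangle|,\,|\langle\bmq,\bmu_j\rangle|\leq CN^{-1/2+\fc/2}$ for every $j$ and coordinate $c$.

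\emph{Step 2 (eigenvalues and eigenvector overlaps).} The derivative $\del_{ab}^{(k)}\lambda_i$ is a fixed finite linear combination of sums over free indices $j_1,\dots,j_{k-1}\neq i$ of a chain product $\langle\bmu_i,V\bmu_{j_1}\rangle\langle\bmu_{j_1},V\bmu_{j_2}\rangle\cdots\langle\bmu_{j_{k-1}},V\bmu_i\rangle$ divided by a product of $k-1$ energy differences $\lambda_i-\lambda_{j_r}$ (with possible repetitions); likewise $\del_{ab}^{(k)}\langle\bmq,\bmu_i\rangle^2=\del_{ab}^{(k)}\langle\bmq,P_i\bmq\rangle$ is such a combination with the chain now running from $\bmq$ to $\bmq$ through $k+1$ eigenvectors $\bmu_i=\bmu_{j_0},\bmu_{j_1},\dots,\bmu_{j_k}$, with $k$ denominators. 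In the first case each eigenvector on the chain contributes exactly two entry factors, so the overlap product is $\leq C_kN^{-k+k\fc}$, and closing the $k-1$ free sums with $S_1$ (or with $S_m$, $m\geq2$, at repeated poles, which is no worse since $S_m\leq S_1^m$) yields $|\del_{ab}^{(k)}\lambda_i|\leq C_kN^{-k+k\fc}S_1^{k-1}\leq C_kN^{-1+(k-1)\tau+(2k-1)\fc}$, which is \eqref{drei} (the $\log$ from $S_1$ absorbed into $\fc$). In the second case there are $2k+2$ entry factors and $k$ free sums, giving $|\del_{ab}^{(k)}\langle\bmq,\bmu_i\rangle^2|\leq C_kN^{-(k+1)+(k+1)\fc}S_1^{k}\leq C_kN^{-1+k\tau+(2k+1)\fc}$, which is \eqref{drev}.

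\emph{Step 3 ($Q_i$).} Differentiate $Q_i=N^{-2}S_2$ directly: $\del_{ab}^{(k)}Q_i$ is $N^{-2}$ times a fixed finite combination of $\sum_{j\neq i}(\lambda_i-\lambda_j)^{-m}\prod_{r}\big(\del_{ab}^{(p_r)}\lambda_i-\del_{ab}^{(p_r)}\lambda_j\big)$ with $2\leq m\leq k+2$, the number of factors in the product equal to $m-2$, and $\sum_r p_r=k$. Bound each numerator factor by $2\max\{|\del_{ab}^{(p_r)}\lambda_i|,|\del_{ab}^{(p_r)}\lambda_j|\}\leq CN^{-1+(p_r-1)\tau+(2p_r-1)\fc}$ from \eqref{drei} (already established in Step 2) and the sum by $S_m\leq N^{m(1+\tau)}\leq N^{(k+2)(1+\tau)}$; multiplying by $N^{-2}$ and adding exponents gives $|\del_{ab}^{(k)}Q_i|\leq C_kN^{(k+2)\tau+O(\fc)}\leq C_kN^{(k+2)\tau+(2k+2)\fc}$, i.e. \eqref{drQ}. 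The extremal term is $m=3$, carrying the single numerator factor $\del_{ab}^{(k)}\lambda_i-\del_{ab}^{(k)}\lambda_j$ and the sum $S_3$.

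\emph{Main difficulty.} Because all derivatives are taken at the single matrix $A$, no continuity/bootstrap along a path of matrices is needed, and the analyticity required to legitimize the perturbation expansions is automatic from $Q_i(A)\leq M$. The one genuine point is that this same bound, via the crude gap estimate $|\lambda_i-\lambda_j|\geq N^{-1-\tau}$, collapses all higher negative moments $S_m$, $m\geq3$, onto $S_2=N^2Q_i$, so the whole proposition rests on just the two a priori inputs of Definition \ref{general} plus $Q_i\leq M$. The remaining labour is purely combinatorial: writing out the finitely many terms produced by the first three derivatives of $\lambda_i$, $\langle\bmq,P_i\bmq\rangle$ and $S_2$, and checking term by term that the count of chain entries, the count of free summation indices and the resulting power of $S_1$ (or $S_m$) combine to \emph{exactly} the stated exponents rather than to something larger; logarithmic factors from $S_1$ and from the dyadic decompositions are harmless and absorbed into an arbitrarily small enlargement of $\fc$.
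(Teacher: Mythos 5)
Your Steps 1 and 2 are sound, and for \eqref{drei} and \eqref{drev} they essentially reconstruct the intended argument: the paper itself only proves \eqref{drev} directly (via the contour formula $\del_{ab}^{(k)}\langle \bmq,\bmu_i\rangle^2=(-1)^kk!\oint\langle\bmq,(GV)^kG\bmq\rangle\,\rd z$ and the same counting you use, with the gap bound $|\lambda_i-\lambda_j|\geq N^{-1-\tau}$ from \eqref{gapb}, delocalization, and non-accumulation giving $S_1\lesssim N^{1+\tau+\fc}$), while \eqref{drei} and \eqref{drQ} are quoted from \cite[Proposition 4.6]{MR3429490}. Your exponent counts for \eqref{drei} and \eqref{drev} check out, and the logarithm from $S_1$ is indeed harmless (it is dominated by $N^{\tau}$, so no enlargement of $\fc$ is even needed).

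Step 3, however, has a genuine gap: to bound $\del_{ab}^{(k)}Q_i$ for $k=2,3$ you invoke \eqref{drei} for the eigenvalues $\lambda_j$ with $j\neq i$, i.e. bounds on $\del_{ab}^{(2)}\lambda_j$ and $\del_{ab}^{(3)}\lambda_j$. These are not available under the hypotheses: \eqref{gapb} controls only gaps from the distinguished index $i$, and the non-accumulation condition of Definition \ref{general} allows up to $CN^{\fc}$ eigenvalues in an interval of length $N^{-1+\fc}$, so two eigenvalues $\lambda_j,\lambda_k$ with $j,k\neq i$ may be arbitrarily close or equal. Then $\del_{ab}^{(2)}\lambda_j=2\sum_{k\neq j}|\langle\bmu_j,V\bmu_k\rangle|^2/(\lambda_j-\lambda_k)$ contains the uncontrolled denominator $\lambda_j-\lambda_k$ and can be arbitrarily large (indeed $\lambda_j$ need not even be differentiable), so the term-by-term estimate of $\sum_{j\neq i}(\lambda_i-\lambda_j)^{-3}\bigl(\del^{(2)}_{ab}\lambda_i-\del^{(2)}_{ab}\lambda_j\bigr)$ fails; only the $k=1$ case of \eqref{drQ} survives your argument, since $\del_{ab}\lambda_j=\langle\bmu_j,V\bmu_j\rangle$ needs delocalization alone. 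The repair is to exploit a cancellation before estimating: for $k=2$, symmetrizing the double sum over $j,k\neq i$, the singular factor $(\lambda_j-\lambda_k)^{-1}$ multiplies $(\lambda_j-\lambda_i)^{-3}-(\lambda_k-\lambda_i)^{-3}$, which contains the factor $(\lambda_k-\lambda_j)$, so after cancellation only denominators of the form $\lambda_\cdot-\lambda_i$ remain and the sum is bounded by products of $S_m$'s exactly as in your power count (the $k=3$ case is handled by the same mechanism, as it must be, since $Q_i$ is a smooth symmetric function of the eigenvalues $\lambda_j$, $j\neq i$, as long as $\lambda_i$ stays simple). Alternatively one simply cites \cite[Proposition 4.6]{MR3429490}, as the paper does.
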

\begin{proof}
The first two estimates \eqref{drei} and \eqref{drQ} are proved in \cite[Proposition 4.6]{MR3429490}. The proof of \eqref{drev} is analogous. We denote $G=(A-z)^{-1}$ the resolvent of $A$, and $V$ the matrix whose matrix elements are zero everywhere except at the $(a,b)$ and $(b,a)$ position, where it equals one. For the derivative of eigenvectors \eqref{drev}, we use the following contour integral formula:
\begin{align*}
\del_{ab}^{(k)}\langle \bmq, u_i\rangle^2
=\del_{ab}^{(k)}\oint \langle \bmq, G(z) \bmq \rangle dz
=(-1)^k k! \oint \langle \bmq, (G(z)V)^kG \bmq \rangle dz,
\end{align*}
where the contour encloses only $\lambda_i$. \eqref{drev} follows from analogue estimate as in the proof of \cite[Proposition 4.6]{MR3429490}. For example 
\begin{align*}
\left|\del_{ab}\langle \bmq, u_i\rangle^2\right|
=\left|2\sum_{j:j\neq i}\frac{\langle \bmq, u_i\rangle\langle \bmq, u_j\rangle (u_i^*Vu_j)}{\lambda_j-\lambda_i}\right|
\leq \frac{2}{N^{2-2\fc}}\sum\left|\sum_{j:j\neq i}\frac{1}{|\lambda_j-\lambda_i|}\right| \leq CN^{-1+\tau+3\fc},
\end{align*}
thanks to the delocalization of eigenvectors of $A$ in directions $\bme_a$, $\bme_b$ and $\bmq$. 
\end{proof}

Recall that $H$ is the normalized adjacency matrix of Erd\H{o}s-R\'{e}nyi graphs as given in section \ref{s:intro}. We define the following matrix stochastic differential equation which is an Ornstein-Uhlenbeck version of the Dyson Brownian motion. The dynamics of the matrix entries are given by the stochastic differential equations
\begin{align}
\label{OUmod0} \rd\left(h_{ij}(t)-\f\right) =\frac{\rd w_{ij}(t)}{\sqrt{N}}-\frac{1}{2}\left(h_{ij}(t)-\f \right)\rd t, \quad f=\frac{p/N}{\sqrt{p(1-p/N)}}.
\end{align}
where $W_t=(w_{ij}(t))_{1\leq i\leq j\leq N}$ is symmetric with $(w_{ij}(t))_{1\leq i\leq j\leq N}$ a family of independent Brownian motions of variance $(1+\delta_{ij})t$. We denote $H_t= (h_{ij}(t))_{1\leq i,j\leq N}$, and so $H_0=H$ is our original matrix. More explicitly, for the entries of $H_t$, we have
\begin{align}
\label{fDBM} h_{ij}(t)=\f+e^{-\frac{t}{2}}\left(h_{ij}(0)-\f\right)+\frac{1}{\sqrt{N}}\int_{0}^{t} e^{\frac{s-t}{2}}\rd{w_{ij}(s)}.
\end{align}
Clearly, for any $t\geq 0$ and $i<j$, we have $\bE[h_{ij}(t)]=\f$, and $\bE[\left(h_{ij}(t)-\f\right)^2]=1/N$. More importantly, the law of $h_{ij}(t)$ is Gaussian divisible, i.e. it contains a copy of Gaussian random variable with variance $O(tN^{-1})$. Therefore $H_t$ can be written as 
\begin{align}
\label{modOU}H_t\stackrel{d}{=}\tilde H_t+\sqrt{1-e^{-t}}G, \quad \tilde H_t =\f +e^{-t/2}(H-\f),
\end{align}
where $G$ is a standard Gaussian orthogonal ensemble, i.e., $G=(g_{ij})_{1\leq i\leq j\leq N}$ is symmetric with $(g_{ij})_{1\leq i\leq j\leq N}$ a family of independent Brownian motions of variance $(1+\delta_{ij})/N$, and is independent of $\tilde H_t$.

\begin{proposition}\label{p:initialest}
For $N^{\delta}\leq p\leq N/2$, we fix $0<\fb\leq \delta/3$. Then for $0\leq s\ll 1$, any unit vector $\bmq\in \bR^N$ such that $\bmq \perp \bme$ (where $\bme=(1,1,\cdots, 1)^*/\sqrt{N}$) and $N$ large enough, the followings hold.
\begin{enumerate}
\item For any $\fc>0$, with overwhelming probability $H_s$ is $(\fc,\bmq)$-general in the sense of definition \ref{general}.
\item Assumptions \ref{a:boundImm} and \ref{a:boundEv} hold for $\tilde H_s$ (as in \eqref{modOU}), with overwhelming probability. More precisely, $\|\tilde H_s\|\leq N$, and uniformly for any $z\in \{E+\i \eta: |E|\leq 5, N^{3\fb-1}\leq \eta\leq 1\}$
\begin{align}\label{e:sparseest}
|\Tr (\tilde H_s-z)^{-1}/N-m_{\rm{sc}}(z)|\leq N^{-\fb},\quad |\langle \bmq, (\tilde H_s-z)^{-1}\bmq\rangle-m_{\rm{sc}}(z)|\leq N^{-\fb},
\end{align}
with overwhelming probability. 
\end{enumerate}
%
%For $N^{\delta}\leq p\leq N/2$, Assumptions \ref{a:boundImm} and \ref{a:boundEv} hold for $\tilde H$, with overwhelming probability. More precisely, fix $0<\fb\leq \delta/4$, then for $N$ large enough and any unit vector $\bmq\in \bR^N$, the following hold. $\|H\|\leq N$, and uniformly for any $z\in \{E+\i \eta: |E|\leq 5, N^{4\fb-1}\leq \eta\leq 1\}$
%\begin{align}\label{e:sparseest}
%|\tilde m(z)-m_{\rm{sc}}(z)|\leq N^{-\fb},\quad |\langle \bmq, \tilde G(z)\bmq\rangle-m_{\rm{sc}}(z)|\leq N^{-\fb},
%\end{align}
%with overwhelming probability, where $\tilde G(z)\deq (\tilde H-z)^{-1}$ and Stieltjes transform as $\tilde m(z)=\Tr \tilde G(z)/N$. 
\end{proposition}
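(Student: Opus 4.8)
The plan is to reduce both assertions to the local semicircle law for sparse Erdős–Rényi graphs, through two elementary reductions: a rescaling of the semicircle law to pass from $H$ to $\widetilde H_s$, and the excision of the rank-one ``mean'' $\f\mathbf{1}\mathbf{1}^\ast$, which is where the hypothesis $\bmq\perp\bme$ enters. Write $q=\sqrt p\ge N^{\delta/2}$, note $\f=\frac{p/N}{\sqrt{p(1-p/N)}}\le\sqrt2\,N^{-1/2}$, and decompose $H=\widehat H+\f\mathbf{1}\mathbf{1}^\ast-\f I$, where $\widehat H:=H-\bE H$ has independent centred entries of variance $1/N$ bounded by $O(q^{-1})$. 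First I would recall that $\widehat H$ obeys the entrywise and isotropic local semicircle laws (\cite{MR3098073}, combined with the isotropic techniques of \cite{MR3103909,MR3183577}), with all error terms bounded by $q^{-1}+(N\eta)^{-1}+(N\eta)^{-1/2}$ uniformly on $\{|E|\le5,\ \eta\gg N^{-1}\}$; since $\fb\le\delta/3$ and $\eta\ge N^{3\fb-1}$, each of these is $\le N^{-\fb}$. The term $\f\mathbf{1}\mathbf{1}^\ast=\f N\,\bme\bme^\ast$ produces a single outlier eigenvalue $\mu\sim q\to\infty$, outside $[-5,5]$ for $N$ large; by the rank-one resolvent identity its effect on $\langle\bmv,(H-z)^{-1}\bmv\rangle$ for $\bmv\perp\bme$, $\|\bmv\|=1$ and $|E|\le5$ is $O\big((q^{-1}+(N\eta)^{-1/2})^2\big)\le N^{-\fb}$, because the denominator $1+\f N\langle\bme,(\widehat H-z)^{-1}\bme\rangle$ has size $\sim q$ while the cross term $\langle\bmv,(\widehat H-z)^{-1}\bme\rangle$ is $O(q^{-1}+(N\eta)^{-1/2})$ (its leading part $\langle\bmv,\bme\rangle m_{\rm sc}(z)$ vanishing).

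For part (2), I would write $\widetilde H_s=\f\mathbf{1}\mathbf{1}^\ast+e^{-s/2}(\widehat H-\f I)$. The shift $-e^{-s/2}\f I$ moves all eigenvalues by $O(N^{-1/2})\le N^{-\fb}$ and is negligible; $e^{-s/2}\widehat H$ has Stieltjes transform $e^{s/2}m_{\widehat H}(e^{s/2}z)$, and one checks $|e^{s/2}m_{\rm sc}(e^{s/2}z)-m_{\rm sc}(z)|\lesssim s\,(1+\eta^{-1/2})$, which is $\le N^{-\fb}$ once $s\lesssim N^{-1/2+\fb/2}$ — the regime relevant to the three-step scheme, where the Gaussian variance $t=1-e^{-s}\sim s$ must be mesoscopic (near a fixed bulk energy $E_0$ the $\eta^{-1/2}$ factor drops and $s\le N^{-\fb}$ suffices). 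Combining the local law for $\widehat H$, this rescaling bound, and the rank-one estimate above gives $|\Tr(\widetilde H_s-z)^{-1}/N-m_{\rm sc}(z)|\le N^{-\fb}$ and, using $\langle\bmq,\bme\rangle=0$, $|\langle\bmq,(\widetilde H_s-z)^{-1}\bmq\rangle-m_{\rm sc}(z)|\le N^{-\fb}$ on $\{|E|\le5,\ \eta\ge N^{3\fb-1}\}$ — this is \eqref{e:sparseest}; the identical argument with $\bmq$ replaced by $\bme_j$ gives the entrywise law. For $E_0$ and $r$ with $[E_0-r,E_0+r]$ in a compact subset of $(-2,2)$ one then has $\Im m_{\rm sc}\sim1$ there, so Assumption \ref{a:boundImm} holds with $\eta_*\sim N^{3\fb-1}$ (the bound $\|\widetilde H_s\|\le\|H\|+2\f N\lesssim N^{1-\delta/2}\le N$ being trivial), and Assumption \ref{a:boundEv} holds for $\bmq$ with the same $\eta_*$.

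For part (1), I would use $H_s\stackrel{d}{=}\widetilde H_s+\sqrt{1-e^{-s}}\,G$: $H_s-\f\mathbf{1}\mathbf{1}^\ast$ again has independent centred entries of variance $1/N$ (an $e^{-s/2}$-contraction of a sparse entry plus an independent Gaussian, still satisfying the sparsity-$q$ moment bounds), hence lies in the class covered by the sparse local law, the Gaussian part only helping; alternatively one feeds the Assumptions \ref{a:boundImm}–\ref{a:boundEv} for $\widetilde H_s$ just established into Theorems \ref{p:resoentry}, \ref{t:isolaw} and \ref{t:rig} with $t=1-e^{-s}$. Either way $H_s$ obeys the entrywise and isotropic local laws and eigenvalue rigidity on $\{|E|\le5,\ \eta\ge N^{-1+\fc}\}$. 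Rigidity gives $\#\{i:\lambda_i(H_s)\in I\}\le C|I|N$ for $|I|\ge N^{-1+\fc}$; and from $\Im\langle\bme_j,(H_s-\lambda_i-\ri N^{-1+\fc})^{-1}\bme_j\rangle\le C$ and $\Im\langle\bmq,(H_s-\lambda_i-\ri N^{-1+\fc})^{-1}\bmq\rangle\le C$ one reads off $\langle\bme_j,\bmu_i(H_s)\rangle^2,\langle\bmq,\bmu_i(H_s)\rangle^2\le CN^{-1+\fc}$ for every index $i$ with $|\lambda_i(H_s)|\le5$, i.e.\ every non-outlier index. For the outlier eigenvector $\bmu_{\rm out}\approx\bme$, the rank-one formula gives $\bmu_{\rm out}=(\widehat H_s-\mu)^{-1}\bme/\|(\widehat H_s-\mu)^{-1}\bme\|=\pm(\bme+\mu^{-1}\widehat H_s\bme+O(\mu^{-2}))$, hence $\langle\bmq,\bmu_{\rm out}\rangle=\pm\mu^{-1}\langle\bmq,\widehat H_s\bme\rangle+O(\mu^{-2})$; and $\langle\bmq,\widehat H_s\bme\rangle=N^{-1/2}\sum_{i,j}q_i(\widehat H_s)_{ij}$ is a sum of independent centred terms of total variance $1/N$, so a Bernstein-type bound yields $|\langle\bmq,\widehat H_s\bme\rangle|\le N^{-1/2+\fc/2}$ with overwhelming probability and therefore $\langle\bmq,\bmu_{\rm out}\rangle^2\lesssim\mu^{-2}N^{-1+\fc}\lesssim p^{-1}N^{-1+\fc}\le N^{-1+\fc}$ (with $\langle\bme_j,\bmu_{\rm out}\rangle^2\lesssim N^{-1}+\mu^{-2}\le N^{-1+\fc}$ similarly).

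Most of this is bookkeeping on top of \cite{MR3098073,MR3429490}; the one genuinely delicate point I expect is the last estimate on $\langle\bmq,\bmu_{\rm out}\rangle$, where the isotropic local law alone is too lossy — its fluctuation $q^{-1}=N^{-\delta/2}$ is of the same order as $\|(\widehat H_s-\mu)^{-1}\bme\|\sim\mu^{-1}\sim N^{-\delta/2}$ — so one must instead exploit the mean-zero structure of $\langle\bmq,\widehat H_s\bme\rangle$ directly via a large-deviation bound. A secondary point to track is that the $e^{-s/2}$-rescaling of the semicircle law forces $s$ to be polynomially small ($s\lesssim N^{-1/2+\fb/2}$ for \eqref{e:sparseest} uniformly up to the spectral edges, or merely $s\le N^{-\fb}$ near a fixed bulk energy), which is compatible with the mesoscopic DBM time $t=1-e^{-s}\sim s\ll r$ used when invoking Theorem \ref{t:normal}.
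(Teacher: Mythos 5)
Your route differs from the paper's. The paper does not centre the matrix at all: it observes that $H_s$ and $\tilde H_s$ themselves lie in the sparse class of \cite{MR3098073}, quotes the entrywise local law \cite[Theorem 2.8]{MR3098073} on $\{|E|\le 5,\ 0<\eta\le 1\}$, and then upgrades it to the isotropic statement for directions $\bmq\perp\bme$ by the exchangeability argument of \cite[Theorem 8.2]{Rigidregular}; that exchangeability step is the \emph{only} reason the hypothesis $\bmq\perp\bme$ appears (see the remark following the proposition). Your decomposition $H=\widehat H+\f\mathbf{1}\mathbf{1}^\ast-\f I$ plus Sherman--Morrison is a sensible alternative, but it needs, as input, an isotropic local law for the \emph{centred sparse} matrix $\widehat H$ in arbitrary fixed directions: you must control $\langle\bmq,(\widehat H-z)^{-1}\bme\rangle$ and $\langle\bmq,(\widehat H-z)^{-1}\bmq\rangle$, and these do not follow from the entrywise law by summing entries (that loses a factor $N^{1/2}$), while \cite{MR3103909,MR3183577} prove isotropic laws for Wigner and sample covariance ensembles, not for sparse ones. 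So the key analytic input of your argument is assumed rather than derived; to close it you would either have to prove a sparse isotropic law or fall back on the exchangeability upgrade, which is exactly what the paper does (and which also settles the direction-$\bmq$ delocalization needed in part (1) without any extra work).

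The second genuine gap is the outlier eigenvector. Writing $\bmu_{\rm out}=\pm(\bme+\mu^{-1}\widehat H_s\bme+O(\mu^{-2}))$ and Bernstein-bounding only the first-order term is not enough: the generic error $O(\mu^{-2})=O(p^{-1})=O(N^{-\delta})$ exceeds the target $|\langle\bmq,\bmu_{\rm out}\rangle|\le CN^{-1/2+\fc/2}$ whenever $\delta<1/2$, which is allowed. One must show that \emph{every} term $\mu^{-k}\langle\bmq,\widehat H_s^{\,k}\bme\rangle$, $k\ge 2$, in the expansion of $\langle\bmq,(\mu-\widehat H_s)^{-1}\bme\rangle$ has the same $N^{-1/2+o(1)}$ smallness — again an isotropic-type statement with the special vector $\bme$, not a consequence of the norm bound or of your $k=1$ estimate. (For comparison, the paper's proof does not isolate the outlier at all; it reads off generality from the local laws on $|E|\le5$, which is itself a bit terse since the Perron eigenvalue sits near $\sqrt p$.) Finally, your rescaling worry is real in spirit but your restriction is stronger than needed: since $m_{\rm sc}$ is $1/2$-H\"older up to the edge, the deterministic error in replacing $e^{s/2}m_{\rm sc}(e^{s/2}z)$ by $m_{\rm sc}(z)$ is $O(\sqrt s+s)$ uniformly on $\{|E|\le5,\ \eta\ge N^{3\fb-1}\}$, so $s\le N^{-2\fb}$ suffices (and in the application only $t=N^{4\fb-1}$ is used); the paper sidesteps the issue by treating $\tilde H_s$ directly as a sparse matrix rather than as a rescaled copy of $H$.
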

\begin{proof}
For any $0\leq s\ll 1$, $H_s, \tilde H_s$ belong to the family of sparse random matrices in \cite{MR3098073} with sparsity $\sqrt{p}$. Under our normalization, with overwhelming probability $\|\tilde H_s\|\leq C\sqrt{p}\ll N$. We denote $G(z)=(H_s-z)^{-1}$ (or $(\tilde H_s-z)^{-1}$) and $m(z)
=\Tr G(z)/N$. Thanks to \cite[Theorem 2.8]{MR3098073}, with overwhelming probability, 
\begin{align}\label{locallaw}
|m(z)-m_{\rm{sc}}(z)|\leq \max_{i,j\in \qq{N}}|G_{ij}(z)-\delta_{ij}\msc(z)|\leq (\log N)^{C\log \log N}\left(\frac{1}{p^{1/2}}+\frac{1}{(N\eta)^{1/2}}\right)
\end{align}
uniformly for any $z\in \{E+\i \eta: |E|\leq 5, 0< \eta\leq 1\}$, where $\msc$ is the Stieltjes transform of the semi-circle distribution.  More, noticing that $H_s, \tilde H_s$ are exchangeable random matrices, it follows from the entry-wise local law \eqref{locallaw} and \cite[Theorem 8.2]{Rigidregular},
\begin{align}\label{isolocallaw}
|\langle \bmq, G(z)\bmq\rangle-\msc(z)|\leq (\log N)^{C\log \log N}\left(\frac{1}{p^{1/2}}+\frac{1}{(N\eta)^{1/2}}\right),
\end{align}
with overwhelming probability. It follows that $H_s$ is $(\fc, \bmq)$-general, and \eqref{e:sparseest} holds for $\tilde H_s$, with overwhelming probability, and thus Assumption \ref{a:boundEv} holds for $\tilde{H}_s$. For Assumption \eqref{a:boundImm}, fix any $\kappa>0$, since on $z\in\{E+\i \eta: E\in [-2+\kappa, 2-\kappa], 0<\eta\leq 1\}$, there exists a constant $C$ such that $2C^{-1}\leq \Im[\msc(z)]\leq C/2$. Therefore it follows from \eqref{e:sparseest} that $C^{-1}\leq \Im[m(z)]\leq C$ on $z\in\{E+\i \eta: E\in [-2+\kappa, 2-\kappa], N^{3\fb-1}<\eta\leq 1\}$.

%For $p$-regular graph, under our normalization $\|H\|=p/\sqrt{p-1}\ll N$. We recall from \cite[Theorem 1.1]{}, with overwhelming probability, 
%\begin{align}\label{locallaw}
%|m(z)-m_{\rm{sc}}(z)|\leq \max_{i,j\in \qq{N}}|G_{ij}(z)-\delta_{ij}\msc(z)|\leq \log N\left(\frac{1}{D^{1/4}}+\frac{1}{(N\eta)^{1/4}}\right)
%\end{align}
%uniformly for any $z\in \{E+\i \eta: |E|\leq 5, 0< \eta\leq 1\}$, where $D=\min\{p, N^2/p^3\}$. Similarly the isotropic local law follows from \cite[Theorem 8.2]{}.

\end{proof}

\begin{remark}
We believe the technical assumption $\bmq \perp \bme$ is not necessary, the isotropic local law \eqref{isolocallaw} holds for any unit vector $\bmq\in \bR^N$. However, the proof in \cite[Theorem 8.2]{Rigidregular} works only for unit vectors perpendicular to $\bme$.
\end{remark}

Thanks to Proposition \ref{p:initialest}, with overwhelming probability, $\tilde H$ satisfies Assumptions \ref{a:boundImm} and \ref{a:boundEv}. In \eqref{modOU}, if we condition on those good initial data $\tilde H$, the eigenvectors of $H_t$ are asymptotically normal with overwhelming probability with respect to the randomness of $G$ (as in \eqref{modOU}). If we then take expectation with respect to $\tilde H$, the following proposition follows.

\begin{proposition}\label{p:smallG}
Fix $\kappa>0$, $0<\fb\leq\delta/3$, positive integer $n>0$ and polynomial $P$ of $n$ variables. Then for any $N^{4\fb-1}\leq t\ll 1$, unit vector $\bmq\in \bR^N$ perpendicular to $\bme$ (where $\bme=(1,1,\cdots, 1)^*/\sqrt{N}$),  indexes $i_1, i_2,\cdots, i_n \in \qq{\kappa N, (1-\kappa)N}$ and $N$ large enough, there exists a constant $\fd$ depending on $\fb, t$,
\begin{equation}
\left|\bE\left[P\left(\left(N\langle \bmq,\bmu_{i_k} (t)\rangle^2\right)_{1\leq k\leq n}\right) \right]-\bE\left[ P\left((|\mathscr{N}_i|^2)_{i=1}^n\right)\right]\right|\leq CN^{-\fd},
\end{equation}
where  $\bmu_i(t)$ are eigenvectors of $H_t$ corresponding to $i$-th eigenvalue, and $\mathscr N_i$ are independent standard normal random variables.
\end{proposition}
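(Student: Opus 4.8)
The plan is to deduce Proposition \ref{p:smallG} by combining Proposition \ref{p:initialest} with Theorem \ref{t:normal}, using the splitting \eqref{modOU}. First I would fix the parameters: given $N^{4\fb-1}\leq t\ll 1$, I choose the mesoscopic window $r$ and the lower scale $\eta_*$ suited to Theorem \ref{t:normal} — concretely, take $\eta_*=N^{3\fb-1}\psi$ (up to the control parameter $\psi=N^{\fc}$ with $\fc$ chosen much smaller than $\fb$), an energy $E_0$ with $|E_0|\leq 2-\kappa$, and $r$ of order a small constant, say $r=\kappa/10$. Since all the indices $i_1,\dots,i_n$ lie in $\qq{\kappa N,(1-\kappa)N}$, by rigidity their classical locations at time $0$ lie in the bulk, and after partitioning the bulk into finitely many overlapping windows $[E_0-r,E_0+r]$ I can reduce to proving the estimate for indices whose (time-$0$) classical locations all lie in a single such window.

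Next I would invoke the Gaussian-divisibility identity \eqref{modOU}: $H_t\stackrel{d}{=}\tilde H_t+\sqrt{1-e^{-t}}\,G$ with $G$ a GOE independent of $\tilde H_t$, and $\sqrt{1-e^{-t}}=\sqrt{t}(1+O(t))$, so up to a harmless rescaling of time this is exactly the Dyson Brownian motion $H_s=H_0'+\sqrt{s}W$ with initial data $H_0'=\tilde H_t$ run for a time $s\asymp t$. The point is that $s$ satisfies $\eta_*\ll s\ll r$: indeed $s\gtrsim t\geq N^{4\fb-1}\gg \eta_*=N^{3\fb-1}\psi$ once $\fc<\fb$, and $s\lesssim t\ll 1\sim r$. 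By Proposition \ref{p:initialest}(2), with overwhelming probability the initial matrix $\tilde H_t$ satisfies Assumptions \ref{a:boundImm} and \ref{a:boundEv} for this choice of $E_0,r,\eta_*$ and the given $\bmq$ (using $\fb\leq\delta/3$ and $\bmq\perp\bme$). So I would condition on the event $\Omega$ that $\tilde H_t$ is such a good initial matrix: on $\Omega$, Theorem \ref{t:normal} applies to the DBM started from $\tilde H_t$, and it yields
\[
\left|\bE\!\left[P\!\left(\left(N|\langle \bmq,\bmu_k(s)\rangle|^2\right)_{k\in I}\right)\,\Big|\,\tilde H_t\right]-\bE\!\left[P\!\left((|\mathscr N_j|^2)_{j=1}^n\right)\right]\right|\leq CN^{-\fd}
\]
for every index set $I$ of size $n$ with all $\lambda_k(s)\in I_{2\kappa}^r(E_0)$. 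I then need to check that the deterministic indices $i_1,\dots,i_n$ fall into this range: by eigenvalue rigidity (Theorem \ref{t:rig}, or the local law input in Proposition \ref{p:initialest}) the eigenvalues $\lambda_{i_k}(s)$ stay within $\psi N^{-1}$ of their classical locations $\gamma_{i_k}(s)$, which in turn are within $O(s\log N)$ of $\gamma_{i_k}(0)$ by \eqref{e:dergamma}; since $s\log N\ll r$, all $\lambda_{i_k}(s)$ indeed lie in $I_{2\kappa}^r(E_0)$ with overwhelming probability, after shrinking the window slightly.

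Finally I would remove the conditioning: write
\[
\bE\!\left[P\!\left((N|\langle\bmq,\bmu_{i_k}(t)\rangle|^2)_k\right)\right]
=\bE\!\left[\mathbf 1_\Omega\,\bE[\,\cdots\mid\tilde H_t]\right]+\bE\!\left[\mathbf 1_{\Omega^c}\,\cdots\right],
\]
bound the first term using the conditional estimate above together with $\bP(\Omega)=1-O(N^{-D})$, and bound the second term trivially: $P$ of the rescaled eigenvector overlaps is polynomially bounded in $N$ (e.g. by $N^{C}$ via the crude bound $N|\langle\bmq,\bmu_i\rangle|^2\leq N$), so $\bE[\mathbf 1_{\Omega^c}\cdots]=O(N^{C}\bP(\Omega^c))=O(N^{-D'})$ for $D$ large. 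Collecting the finitely many windows and the two error contributions gives the claimed $CN^{-\fd}$ bound. The genuinely substantive input is of course Theorem \ref{t:normal} and the verification in Proposition \ref{p:initialest} that $\tilde H_t$ meets its hypotheses; the only thing requiring care here is the bookkeeping of scales — checking $\eta_*\ll s\ll r$ simultaneously with $\eta_*=N^{3\fb-1+\fc}$ being large enough for Assumption \ref{a:boundEv} to hold for $\tilde H_t$, which forces the chain of inequalities $\fc\ll\fb\leq\delta/3$ and $t\geq N^{4\fb-1}$ — and matching the index range in Theorem \ref{t:normal} (expressed via $\lambda_k(t)\in I^r_{2\kappa}(E_0)$) with the deterministic indices $i_k$, which is where the rigidity estimate and \eqref{e:dergamma} enter. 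I expect no conceptual obstacle beyond this; the main "work" is already packaged in the earlier sections.
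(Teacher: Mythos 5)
Your proposal is correct and follows essentially the same route as the paper, which deduces Proposition \ref{p:smallG} by using Proposition \ref{p:initialest} to verify Assumptions \ref{a:boundImm} and \ref{a:boundEv} for $\tilde H_t$ with overwhelming probability, conditioning on this good initial data, applying Theorem \ref{t:normal} to the Gaussian-divisible representation \eqref{modOU}, and then taking expectation over $\tilde H_t$. Your additional bookkeeping (choice of $\eta_*$, $r$, matching deterministic indices to the window via rigidity and \eqref{e:dergamma}, and the trivial bound on the bad event) just makes explicit what the paper leaves implicit.
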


\begin{proof}[Proof of Theorem \ref{t:evsparse}]
For simplicity of notation, we only state the proof for $n=1$ case, i.e. we fix time $t=N^{4\fb-1}$, and prove that for any $i\in \qq{\kappa N, (1-\kappa)N}$
\begin{align}\label{oneg}
\left|\bE[P(N \langle \bmq, \bmu_i(0)\rangle^2)]-\bE[P(N\langle \bmq, \bmu_i(t)\rangle^2)]\right|\leq CN^{-\fd}.
\end{align}

Take a cutoff function $\rho_M$ such that $\rho_M(x)=1$ for $x\leq M$ and $\rho_M(x)=0$ for $x\geq 2M$, where $M=N^{2\tau}$ and $\tau>0$ is a small constant. By the level repulsion of $H$ and $H_t$ from \cite[Theorem 4.1]{MR3429490}, we know that
\begin{align*}
\bP(Q_i(H_s)\geq N^{2\tau})\leq N^{-\tau/2}, \quad s=0, t.
\end{align*} 
Let $m$ be the degree of $P$, we have that $P(x)\leq Cx^m$. By \eqref{p:initialest}, $H_s$ is $(\fc,\bmq)$-general, especially, with overwhelming probability $N\langle \bmq, \bmu_i(s)\rangle^2\leq CN^{\fc}$, for $s=0,t$. Therefore 
$\bE[P^2(N\langle \bmq, \bmu_i(s)\rangle^2)]\leq CN^{2m\fc}$, and we have
\begin{align*}
&\left|\bE[P(N \langle \bmq, \bmu_i(0)\rangle^2)]-\bE[P(N\langle \bmq, \bmu_i(t)\rangle^2)]\right|\\
\leq &\left|\bE[P( N
 \langle \bmq, \bmu_i(0)\rangle^2)\rho_{M}(Q_i(H_0))]-\bE[P( N\langle \bmq, \bmu_i(t)\rangle^2)\rho_{M}(Q_i(H_t))]\right|\\
+&\bE[P^2( N\langle \bmq, \bmu_i(0)\rangle^2)]^{1/2}\bP(Q_i(H_0)\geq N^{2\tau})+\bE[P^2( N\langle \bmq, \bmu_i(t)\rangle^2)]^{1/2}\bP(Q_i(H_t)\geq N^{2\tau})\\
\leq &\left|\bE[O(N \langle \bmq, \bmu_i(0)\rangle^2)\rho_{M}(Q_i(H_0))]-\bE[P(N\langle \bmq, \bmu_i(t)\rangle^2)\rho_{M}(Q_i(H_t))]\right|+CN^{-\tau/2+m\fc}.
\end{align*}
Notice that $P(N\langle q, \bmu_i(A)\rangle^2)\rho_{M}(Q_i(A))$ is a well defined smooth function on the space of symmetric functions. Moreover, if the matrix $A$ is $(\fc, \bmq)$-general in the sense of Definition \ref{general}, the eigenvectors of $A$ are delocalized,  and $Q_i(A)\leq M=N^{2\tau}$, 
then from Proposition \ref{bdrQ}, we have 
\begin{align*}
\left|\del_{ab}^{(3)}P(N\langle\bmq, \bmu_i(A)\rangle^2)\rho_{M}(Q_i(A))\right|\leq CN^{(m+8)\fc+5\tau},
\end{align*}
where $C$ is a  universal constant. Therefore by \cite[Lemma 4.3]{MR3429490}, we have
\begin{align*}
\left|\bE[P(N \langle \bmq, \bmu_i(0)\rangle^2)]-\bE[P(N\langle \bmq, \bmu_i(t)\rangle^2)]\right|\leq CtN^{1+(m+8)\fc+5\tau}p^{-1/2}\leq CN^{-\fd},
\end{align*}
provided we take $t=N^{4\fb-1}$, and $\fb, \fc,\tau$ small enough such that $4\fb+(m+8)\fc+5\tau-\delta/2\leq \fd$ and $\fb\leq \delta/3$. Combining with Proposition \ref{p:smallG}, it follows
\begin{align*}
\left|\bE[P(N \langle \bmq, \bmu_i(0)\rangle^2)]-\bE[P(\mathscr N^2)]\right|\leq  CN^{-\fd},
\end{align*}
where $\mathscr N$ is a standard normal random variable.

The proof for the case of $p$-regular graphs is analogous. Let $H$ be the normalized adjacency matrix of $p$-regular graphs as in Section \ref{s:intro}. The isotropic local law of $H$ was proved in \cite{Rigidregular}. Since the adjacency matrix $A$ of a $p$-regular graph is subject to the hard constraints that its rows and columns have sum $p$ (i.e. it has the eigenvector $\bme=(1,1,\cdots, 1)^*/\sqrt{N}$). Therefore, instead of using the usual Dyson Brownian motion \eqref{modOU} as in the Erd\H{o}s-R\'{e}nyi graph case, we use the \emph{constrained} Dyson Brownian motion (as introduced in \cite[Definition 2.2]{Rigidregular}), which is the Dyson Brownian motion constrained to the subspace of symmetric matrices whose row and column sums vanish. 
Let $H_t$ be the constrained Dyson Brownian motion after time $t$ with initial data $H_0=H$. We denote its   eigenvalues $\lambda_1(t)\leq  \lambda_2(t)\leq \cdots\leq \lambda_{N-1}(t)\leq \lambda_N(t)=p/\sqrt{p-1}$, with corresponding eigenvectors $\bmu_1(t),\bmu_2(t),\cdots(t), \bmu_{N-1}(t),\bmu_N(t)=\bme$.  
Up to a change of basis, the constrained Dyson Brownian motion is equivalent to the usual $(N-1)$-dimensional Dyson Brownian motion normalized by $N$ rather than by $N-1$. More concretely, let $P$ be an isomorphism from $\bme^{\perp}$ to $\bR^{N-1}$, e.g., 
we can take
\begin{align*}
P_{ij}=\delta_{ij} -\frac{1}{\sqrt{N}-1}\left(\frac{1}{\sqrt{N}}-\delta_{jN}\right),\quad i\in\qq{1,N-1}, j\in \qq{1,N}.
\end{align*}
Once we identify $\bme^{\perp}$ with $\bR^{N-1}$ using $P$, the constrained Dyson Brownian motion is the same as the usual $N-1$-dimensional Dyson Brownian motion:
\begin{align*}
P H_t P^* \stackrel{d}= e^{-t/2}PH_0 P^*+\sqrt{1-e^{-t}}G,
\end{align*}
where $G=(g_{ij})_{1\leq i\leq j\leq N-1}$ is symmetric with $(g_{ij})_{1\leq i\leq j\leq N-1}$ a family of independent Brownian motions of variance $(1+\delta_{ij})/N$. Since $\bmu_i(t)\perp \bme$, $P\bmu_i(t)$ for $i\in\qq{N-1}$ are eigenvectors of $PH_tP^*$. Thus $P\bmu_i(t)$ for $i\in\qq{N-1}$ have the same distribution as the eigenvectors of $e^{-t/2}PH_0 P^*+\sqrt{1-e^{-t}}G$. Thanks to Theorem \ref{t:normal}, for $t\gg 1/N$, the bulk eigenvectors of $e^{-t/2}PH_0 P^*+\sqrt{1-e^{-t}}G$ are asymptotically normal in the direction $P\bmq$, which is a unit vector in $\bR^{N-1}$ since $\bmq \perp \bme$. Noticing that $\langle P\bmq, P\bmu_i(t)\rangle =\langle \bmq, \bmu_i(t)\rangle$, we conclude that the bulk eigenvectors of $H_t$ are asymptotically normal in the direction $\bmq$. The same argument as in the proof of Erd\H{o}s-R\'{e}nyi case, combining with the continuity Proposition \cite[Proposition 3.1]{Univregular}, implies that the law of $\{\langle \bmq, \bmu_i(0)\rangle\}_{i=i_1,i_2,\cdots, i_n}$ is asymptotically the same as that of $\{\langle \bmq, \bmu_i(t)\rangle\}_{i=i_1,i_2,\cdots, i_n}$. And thus, the claim of Theorem \ref{t:evsparse} follows.
%we denote 
%\begin{align*}
%P=I-\frac{2(\bme-\bme_N)(\bme-\bme_N)^*}{\|\bme-\bme_N\|^2},
%\end{align*}
%the reflection sending $\bme=(1,1,\cdots, 1)^*/\sqrt{N}$ to $\bme_N=(0,0,\cdots, 0,1)^*$. Then
%
%
%
%\begin{enumerate}
%\item the isotropic local law was proved in \cite{}. 
%\item 
%\item
%\end{enumerate}
\end{proof}

%{\color{blue} For the eigenvectors, we have the following universality result on eigenvectors corresponding to bulk eigenvalues.
%\begin{theorem}\label{evsparse}
% Let $H$ be a sparse random matrix as defined in Definition \ref{def:snrm}, with sparsity parameter $q$ satisfying
% \begin{align}
% N^{\alpha} \leq q \leq N^{1/2},
% \end{align}
%for some number $\alpha>0$. Then for any $\kappa>0$, indexes $i_1,i_2,\cdots, i_n\in [[\kappa N, (1-\kappa)N]]$ and $j_1,j_2,\cdots, j_n\in[[1,N]]$
%\begin{align}\label{evuni}
%\begin{split}
%\lim_{N\rightarrow \infty}&\bE^{(H)}[ O(N |u_{i_1}(j_1)|^2, N |u_{i_2}(j_2)|^2, \cdots, N |u_{i_n}(j_n)|^2)]
%-\bE[ O({\cal{N}}_1^2,{\cal{N}}_2^2, \cdots, {\cal{N}}_n^2)]=0,
%\end{split}
%\end{align}
%where ${\cal{N}}_1, {\cal{N}}_2,\cdots, {\cal{N}}_n$ are independent standard gaussian random variables, and the test function $O$ is a polynomial in $n$ variables. 
%\end{theorem}
%
%}

\bibliography{References}{}
\bibliographystyle{plain}

\end{document}